\begin{document}

\numberwithin{equation}{section}

\newtheorem{thmm}{Theorem} \newtheorem{theorem}{Theorem}[section]
\newtheorem{lemma}[theorem]{Lemma}
\newtheorem{cor}[theorem]{Corollary}
\newtheorem{sublem}[theorem]{Sublemma}
\newtheorem{proposition}[theorem]{Proposition}
\newtheorem{conj}{Conjecture}\renewcommand{\theconj}{}
\newtheorem{defin}[theorem]{Definition}
\newtheorem{ldefin}[theorem]{Lemma-Definition}
\newtheorem{cond}{Condition} \newtheorem{hyp}{Hypotheses}
\newtheorem{exmp}{Example} \newtheorem{remark}[theorem]{Remark}
\newtheorem{convention}[theorem]{Convention}
\newtheorem{definition}[theorem]{Definition}
\newtheorem{sublemma}[theorem]{Sublemma}
\newtheorem{corollary}[theorem]{Corollary}
 \def\III{\mathbb{I}} 
 \def\complex{\mathbb{C}} \def\supp{\mathrm{supp}}
\def\BB{\mathcal{B}}
\def\FF{\mathcal{F}} \def\GG{\mathcal{G}}
\def\II{\mathcal{I}} \def\JJ{\mathcal{J}}
\def\LL{\mathcal{L}}
\def \PP{\mathcal{P}} \def \QQ{\mathcal {Q}}
 \def\TT{\mathcal{T}} 
\def\ZZ{\mathcal{Z}} \def\FFF{\mathbb{F}}
\def\PPP{\mathbb{P}}

\def\real{{\mathbb R}} \def\rational{{\mathbb Q}}
\def\natural{{\mathbb N}} \def\integer{{\mathbb Z}}

\def\B{\mathcal{B}} \def\C{\mathcal{C}} \def\O{\mathcal{C}}
\def\P{\mathcal{P}} \def\U{\mathcal{U}}

\def\const{\operatorname{const}}
\def\closure{\operatorname{closure}}
\def\dist{\operatorname{dist}} \def\esssup{\operatorname{ess\
    sup}} \def\essinf{\operatorname{ess\ inf}}
\def\inte{\operatorname{int}} \def\Lip{\operatorname{Lip}}
\def\max{\operatorname{max}} \def\min{\operatorname{min}}
\def\mod{\operatorname{mod}} \def\osc{\operatorname{osc}}
\def\sign{\operatorname{sign}}
\def\supp{\operatorname{supp}}

\def\al{\alpha} \def\be{\beta} \def\vep{\varepsilon}
\def\th{\theta} \def\om{\omega} \def\la{\lambda} \def\La{\Lambda}
\def\ga{\gamma} \def\ka{\kappa}

\def\ome{\tilde{\om}} \def\a{a_J} \def\I{I} \def\i{[0,\delta]}
\def\ii{[0,\vep]} \def\l{l_0}

\title[Parameter ASIP for the quadratic family] {A parameter ASIP
  for the quadratic family} \author{Magnus
  Aspenberg\textsuperscript{(1)}, Viviane
  Baladi\textsuperscript{(2),(3)}, and Tomas
  Persson\textsuperscript{(1)}} \address{(1) Centre for
  Mathematical Sciences, Lund University, Box 118, 221 00 Lund,
  Sweden}

\address{(2) Institute for Theoretical Studies, ETH, 8092 Z\"urich,
  Switzerland} \address{(3) Sorbonne Universit\'e and Universit\'e Paris Cit\'e, CNRS,   Laboratoire de Probabilit\'es, Statistique et Mod\'elisation,
  F-75005 Paris, France}

\email{magnus.aspenberg@math.lth.se} \email{baladi@lpsm.paris}
\email{tomaspersson@gmx.com} \date{June 4, 2024}

\begin{abstract}
  Consider the quadratic family $T_a(x) = a x (1 - x)$, for
  $x \in [0, 1]$ and mixing Collet--Eckmann (CE) parameters 
  $a \in (2,4)$.  For bounded
  $\varphi$, set
  $\tilde \varphi_{a} := \varphi - \int \varphi \, d\mu_a$, with
  $\mu_a$ the unique acim of $T_a$, and put
  $(\sigma_a (\varphi))^2 := \int \tilde \varphi_{a}^2 \, d\mu_a + 2
  \sum_{i>0} \int \tilde \varphi_{a} (\tilde \varphi_{a} \circ T^i_{a}) \,
  d\mu_a$. For any mixing Misiurewicz parameter $a_*$,
  we find a positive measure set $\Omega_*$ of mixing CE
  parameters, containing $a_*$ as a  Lebesgue density point, 
  such that for  any H\"older $\varphi$
  with $\sigma_{a_*}(\varphi)\ne 0$, there exists $\epsilon_\varphi >0$
  such that, for normalised Lebesgue measure on
  $\Omega_*\cap [a_*-\epsilon_\varphi, a_*+\epsilon_\varphi]$, the functions
  $\xi_i(a)=\tilde \varphi_a(T_a^{i+1}(1/2))/\sigma_a (\varphi)$ satisfy an almost sure
  invariance principle (ASIP) for any error exponent $\gamma
  >2/5$. (In particular, the Birkhoff sums satisfy this ASIP.)
  Our argument goes along the lines of Schnellmann's proof for
  piecewise expanding maps. We need to introduce a variant of Benedicks--Carleson parameter exclusion 
  and to exploit fractional response and uniform exponential decay of correlations
  from \cite{BBS}.
\end{abstract}
\subjclass[2010]{ 37A10, 37A50, 37E05, 60F17}
\thanks{MA and TP thank the Institute for Theoretical
  Studies, ETH Z\"urich, for its hospitality in 2022.  VB's research is
  supported by the European Research Council (ERC) under the
  European Union's Horizon 2020 research and innovation programme
  (grant agreement No 787304).  VB is grateful to the K.\ and
  A.\ Wallenberg Foundation, for an invitation to Lund
  University in 2021, and to J. Sedro for the reference \cite{Mo}.
  VB and TP thank Q. Berger and F. P\`ene 
  for useful conversations and acknowledge the hospitality of CIRM (Luminy) 
 and the Lorentz Center (Leiden),  where part of
this work was carried out in 2022. We are grateful to the referee for a very careful reading
which allowed us to remove several small mistakes.
}
\maketitle
\tableofcontents

\section{Introduction}

\subsection{Background and Motivation}
Let $(\Omega_*,m_*,\FF_*)$ be a probability spa\-ce.  We say that a
sequence of measurable functions
$\xi_i \colon \Omega_* \to\real$, $i\ge1$ satisfies the
\emph{almost sure invariance principle (ASIP) with error exponent
  $\gamma<1/2$} if there exist a probability space
$(\Omega_W, m_W,\FF_W)$ supporting a (centered) one-dimensional
Brownian motion $W$ and a sequence of measurable functions
$\eta_i \colon \Omega_W\to \real$, $i\ge1$, such that
\renewcommand{\labelenumi}{\roman{enumi})}
\begin{enumerate}
\item The random variables $\{\xi_i\}_{i\ge1}$ and
  $\{\eta_i\}_{i\ge1}$ have the same\footnote{By definition of
    the distribution of discrete-time real-valued stochastic
    processes, this means that for any $n\ge 1$ and any
    $\{y_i\in \real\mid 1\le i \le n\}$, the joint probability
    that $\xi_i\le y_i$ for all $1\le i \le n$ coincides with the
    joint probability that $\eta_i\le y_i$ for all
    $1\le i \le n$.}  distribution.

\item Almost surely, 
  $ \bigl|W(n)-\sum_{i=1}^{n}\eta_i\bigr|=O(n^\gamma) $ as
  $n\to\infty$.
\end{enumerate}

Since a Brownian motion at integer times coincides with a sum of
independent identically distributed (i.i.d.) Gaussian variables,
the above definition can also be formulated as an almost sure
approximation, with error $o(n^{\gamma})$, by a sum of i.i.d.\
Gaussian variables.

It is a classical result (see \cite{ps}) that
if the $\{\xi_i\}$ satisfies the ASIP then it satisfies the law
of the iterated logarithm (LIL), the central limit theorem (CLT)
and the functional CLT: Letting $\sigma^2>0$ be the variance of
the Brownian motion $W$ (the expectation is zero by assumption),
and denoting Lebesgue measure by $m$, the LIL says that
\[
  \limsup_{n\to\infty}\frac1{\sqrt{2n\log\log n}}\sum_{i=1}^n
  \xi_i(a) =\sigma\,,\qquad\text{for $m_*$-almost every }a\in
  \Omega_*\,,
\]
and the CLT  (for the functional CLT, see
  \cite[Lemma~5.1]{DLS})
  says that
\[
  \lim_{n\to\infty}m_* \biggl( \biggl\{\, a \in \Omega_* \mid
  \frac1{\sigma\sqrt{n}} \sum_{i=1}^n\xi_i(a)\le y \,\biggr\}
  \biggr) =\frac1{\sqrt{2\pi}}\int_{-\infty}^ye^{-s^2/2}\, ds\, ,
  \ \forall y \in \real\, .
\]

\bigskip

We consider $I=[0,1]$ and the quadratic family
\[
  T_a(x) = a x (1 - x)\, ,\quad x \in I\, ,\,\, a\in (2,4]\,
  .
\]
Denote by $c=1/2$ the critical point of $T_a$ and set
$ c_{j}(a)=T_a^{j}(c)$ for $j\ge 1$.

If
$\liminf_{n \to \infty} n^{-1} \log \partial_x(T_a^n) (T_a(c)) >
0$, we say that $a$ is a Collet--Eck\-mann (CE) parameter.  If
$a$ is CE, then $T_a$ admits a unique
absolutely continuous invariant probability measure (acim)
$\mu_a=h_a dm$.  
Our
goal is to find a positive Lebesgue measure set
$\Omega_*$ of  CE parameters with a
Lebesgue density point $a_*\in \Omega_*$ such that for
any H\"older continuous function $\varphi \colon I\to \real$ with
$\sigma_{a_*}(\varphi)\ne 0$ (see \eqref{defsig}), there exists $\epsilon_\varphi >0$
such that the ASIP holds for $m_*$ the normalised Lebesgue
measure on
$\Omega_*\cap[a_*-\epsilon_\varphi, a_*+\epsilon_\varphi]$ and
\[
  \xi_j(a):=\varphi_a(c_{j+1}(a))\, , \qquad j\ge 0\, , \quad
  a\in\Omega_*\cap[a_*-\epsilon_\varphi, a_*+\epsilon_\varphi] \,
  ,
\]
where $\varphi_a$ is a suitable normalisation of $\varphi$ (see
\eqref{eq.varphia}). We follow the approach of Schnellmann
\cite{DS}, who developed this program for transversal families of
piecewise expanding maps $T_a$, for which $\Omega_*$ can be taken
to be an interval.
 
Our main motivation is to extend to the quadratic family the
method developed by de Lima--Smania \cite{DLS} in the setting of
piecewise expanding maps, in order to study linear and fractional
response. (This method requires a functional central limit
theorem, see \cite[Lemma~5.1]{DLS}.)

\smallskip

We say that $T_a$ is mixing if it is topologically mixing on
\[
  K(a):=[c_2(a), c_1(a)]\, .
\] 
 It will be convenient below to restrict to mixing maps
$T_a$. Tiozzo recently showed \cite[Cor 3.15]{Tio} (his result
holds in fact for more general unimodal maps) that $T_a$ is
(strongly) mixing for its unique measure of maximal entropy (MME) if its topological entropy is
greater than $\log (2)/2$.
If $a$ is a CE parameter with strongly mixing MME, then  $T_a$ is
topologically mixing on $K(a)$ since the measure of maximal
entropy has\footnote{Indeed, since $T_a$ has no homtervals if
 $a\in CE$, it is
conjugated to its piecewise linear model $F_a$ by a homeomorphism which maps the MME of $F_a$
to the MME of $T_a$,  and the MME of $F_a$ is absolutely continuous
with a positive density on $[F^2_a(c), F_a(c)]$.}  full support there. 
Since the topological entropy of $T_4$
is equal to $\log 2$, and the topological entropy of $T_a$ is
nondecreasing and continuous (in fact H\"older continuous
\cite{Gu}) in $a$, there exists $a_\mathrm{mix}<4$ such that for all
$a \in (a_\mathrm{mix}, 4]\cap CE$, the map $T_a$
is topologically mixing on $K(a)$, and $\mu_a$ is strongly mixing,
with support $K(a)$.

\smallskip
 
Melbourne and Nicol showed \cite{mn} the ASIP in the phase space $x\in K(a)$,
setting $\xi_i=T^i_a(x)$ for a fixed CE map $T_a$, using an
induced uniformly expanding system (then \cite[Section~7]{ps}
provides an ASIP which projects to the ASIP for the original CE
map).  However, to the best of our knowledge, the ASIP in the
parameter $a$ is still open.
 
In the parameter space, typicality (the law of large numbers,
LLN) and the LIL are known: Avila--Moreira
\cite{avilamoreiratypical}, showed that\footnote{Benedicks and
  Carleson established typicality in \cite{BC1} for the Cantor
  set of CE parameters considered there.} for Lebesgue almost
every CE map $T_a$ the critical point is typical for its unique
absolutely continuous invariant measure $\mu_a=h_a dm$:
\begin{equation}
  \label{eq.typical}
  \lim_{n\to\infty} \frac1n \sum_{i=1}^n \varphi (c_i(a)) =
  \int_0^1 \varphi \, d\mu_a \,,\qquad\forall \varphi\in C^0 \,
  .
\end{equation}
For H\"older continuous $\varphi\colon I\to \real$ and a topological mixing CE
parameter $a$, define $\sigma_a( \varphi)\ge 0$ by
\begin{align}\label{defsig}
 ( \sigma_a(\varphi))^2
  &:= \int_0^1 \biggl( \varphi-\int\varphi
    d\mu_a \biggr)^2 \, d\mu_a \\
  \label{eq.sigma}
  &\qquad + 2\sum_{i>0} \int_0^1 \biggl( \varphi - \int\varphi \, d\mu_a
    \biggr) \biggl( \varphi - \int \varphi \, d\mu_a \biggr) \circ
    T^i_a \, d\mu_a\,,
\end{align}
where the sum \eqref{eq.sigma} is finite because topological
mixing (i.e., the fact that the map is nonrenormalisable) implies
\cite{KeNo}
exponential mixing for the acim and H\"older continuous
observables.

In a work in progress, Gao and Shen \cite{GS2} show that,  for Lebesgue almost every  $a$ in 
 the set of  
mixing CE parameters, for every H\"older observable $\varphi$, either $\sigma_a(\varphi)=0$ or the critical point $c$ of $T_a$ satisfies the LIL for $\varphi$, i.e.,
\[
  \limsup_{n\to\infty} \frac1{\sqrt{2n\log\log n}}\sum_{i=1}^n
  \biggl( \varphi(T_a^i(c))-\int\varphi \, d\mu_a \biggr) =
  \sigma_a(\varphi)\,.
\]

\subsection{Statement of the ASIP (Theorem~\ref{t.asip})}

To state our main result, we need more notation
and definitions. 
For $j\ge0$ and $a \in (a_\mathrm{mix},4]$, set
\begin{equation*}
  x_j(a)=c_{j+1}(a)=T^{j+1}_a(c)\, , \qquad
  T_a'(x)=\partial_xT_a(x)\, , \qquad
  x_j'(a)=\partial_a x_j(a)\, .
\end{equation*}
The family $T_a$ is \emph{transversal} at ${a_*}$ if (see
\cite{tsujii}) there exists $C\ge1$ such that
\begin{equation}
  \label{eq.transversal}
  \frac{1}{C}\le \biggl| \frac{x_j'(a_*)}{(T_{a_*}^j)'(c_1(a_*))}
  \biggr| \le C\,,\qquad\forall j\ge1\,.
\end{equation}
By \cite[Theorem 3]{tsujii2},
 all  CE parameters are transversal. We refer to \cite[($NV_t$)]{tsujii} for an
equivalent condition expressed in terms of the postcritical
orbit.

\smallskip

The map $T_a$ is $(H_a, \kappa_a)$-\emph{polynomially recurrent,}
for $\kappa_a\ge 1$ and $H_a\ge 1$, if
\begin{equation}\label{fordistort}
  |x_{j-1} (a) - c| = |T^j_a(c)-c| \ge \frac {1}{j^{\kappa_a}} \, ,
  \qquad \forall j \ge H_a\, .
\end{equation}
  If
$\inf_{j\ge 1} |T_a^j(c)-c|>0$ then $a$ is called a
\emph{Misiurewicz parameter.} 
Misiurewicz parameters are CE and thus transversal.
Avila and Moreira \cite{avilamoreira2003} showed that, for any
$\kappa_0>1$, the set of parameters $a$ which are
$(H_a, \kappa_0)$-polynomially recurrent for some $H_a$ has full
measure in the set of CE parameters. The set of Misiurewicz parameters $a$ is
uncountable (it has full Hausdorff dimension \cite[Thm.~1.4]{Za} but zero
Lebesgue measure). 
\smallskip

Finally, we introduce the normalisation $\varphi_a$: Let
$\varphi$ be bounded such that $\sigma_{a}(\varphi)\ne 0$ for a
mixing CE parameter $a$.  Then the function
\begin{equation}
  \label{eq.varphia}
  \varphi_a(x):=\frac1{\sigma_a(\varphi)}
  \biggl(\varphi(x)-\int_0^1\varphi \, d\mu_a \biggr) \, 
\end{equation}
is well defined and satisfies
\begin{equation}
  \label{eq.varphinormalised}
  \sigma_a(\varphi_a)=1\,\qquad\text{and} \quad \int \varphi_a \,
  d\mu_a=0\, .
\end{equation}

\begin{theorem}[Main Theorem: ASIP]
  \label{t.asip}
  For any  Misiurewicz parameter $a_*\in (a_\mathrm{mix},4)$
  there exists a positive Lebesgue measure set $\Omega_*$ of
  mixing polynomially recurrent parameters, containing $a_*$ as a
  Lebesgue density point, such that for any H\"older continuous
  function $\varphi$ with $\sigma_{a_*}(\varphi)\ne 0$, there
  exists\footnote{The choice of $\epsilon_\varphi$ ensures in
    particular that $\sigma_a(\varphi)\ne 0$ if
    $\sigma_{a_*}(\varphi)\ne 0$.}  $\epsilon_\varphi >0$ such
  that the functions
  \begin{equation}
    \label{eq.xi}
    \xi_n(a) :=\varphi_a (x_{n}(a))= \varphi_a (T_a^{n+1}(c))\,,
    \qquad n \geq 1\, ,
  \end{equation}
  satisfy the ASIP for normalised Lebesgue measure $m_*$ on
  $\Omega_* \cap [a_*-\epsilon_\varphi, a_*+\epsilon_\varphi]$
  and all error exponents $\gamma>2/5$.
\end{theorem}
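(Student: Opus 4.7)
\bigskip
\noindent\textbf{Proof proposal.} The plan is to follow Schnellmann's block decoupling strategy \cite{DS} for piecewise expanding maps, adapting it to the non-uniformly expanding quadratic setting via a Benedicks--Carleson-type parameter exclusion. The first step is to build the set $\Omega_*$. Starting from the transversal Misiurewicz parameter $a_*$, one carries out an inductive exclusion procedure: at stage $n$, one removes from a small interval around $a_*$ those parameters for which some iterate $c_j(a)$, $j\le n$, falls deeper than a tiny scale into the critical neighbourhood. The Misiurewicz property at $a_*$, together with \eqref{eq.transversal}, gives a large-deviation estimate on each exclusion step (the excluded part has measure summing to something strictly less than $1$), so that the surviving set $\Omega_*$ has positive measure and admits $a_*$ as a Lebesgue density point. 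The construction is engineered so that every $a\in\Omega_*$ is mixing CE, $(H_a,\kappa_a)$-polynomially recurrent with uniform constants, and transversal with a uniform constant in \eqref{eq.transversal}.

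Next, I would import from \cite{BBS} the two analytic inputs needed to make the argument uniform in $a\in\Omega_*$: fractional response of $\mu_a$ with respect to $a$, and a uniform exponential decay of correlations of H\"older observables against $\mu_a$. The fractional response guarantees that $a\mapsto \int\varphi\, d\mu_a$ is sufficiently regular on $\Omega_*$ (so that $\varphi_a$ is a well-controlled normalisation), and that $a\mapsto\sigma_a(\varphi)$ is continuous at $a_*$; choosing $\epsilon_\varphi$ small enough ensures $\sigma_a(\varphi)\neq 0$ uniformly on $\Omega_*\cap[a_*-\epsilon_\varphi,a_*+\epsilon_\varphi]$. The uniform exponential mixing ensures that the series \eqref{eq.sigma} converges uniformly, and is the main dynamical input for the moment and decoupling estimates below.

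The heart of the argument is to reduce the parameter ASIP for $\xi_n(a)=\varphi_a(x_n(a))$ to a checkable probabilistic criterion. Following Schnellmann, one cuts $\{1,\dots,N\}$ into alternating long blocks of length $L\sim N^\alpha$ and short gap blocks of length $\ell\ll L$; the gap blocks absorb the ``memory'' between consecutive long blocks. To control the characteristic function of a single long block $S_{k,L}(a)=\sum_{i=k}^{k+L-1}\xi_i(a)$, one uses the transversality identity
\[
 x_j'(a)\;\approx\;(T_a^j)'(c_1(a))\cdot J(a)\,,
\]
to convert derivatives in $a$ (needed for oscillatory integrals or change of variables on $\Omega_*$) into phase-space derivatives along the postcritical orbit; combined with bounded distortion along good itineraries (enforced by polynomial recurrence), this lets one estimate $\int_{\Omega_*}e^{\mathrm{i}t S_{k,L}(a)}\,dm_*(a)$ by a transfer-operator quantity for $T_a$ acting on suitable H\"older balls, to which the spectral gap from \cite{BBS} applies. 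Decoupling between two distant blocks is obtained similarly: the gap $\ell$ allows the transfer operator to contract to the acim, and uniform exponential mixing gives an error $O(\theta^\ell)$. Summing these estimates and optimising the block lengths produces variance $\sim N$ for $\sum_1^N \xi_i$ (so $\sigma_{a_*}(\varphi)$ really is the asymptotic variance) and the hypotheses of a general ASIP theorem (e.g.\ Gou\"ezel or Cuny--Merlev\`ede); a standard optimisation of $L,\ell$ yields the error exponent $\gamma>2/5$.

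The main obstacle, as in the piecewise expanding case but aggravated here by the quadratic tangency at $c$, is the parameter exclusion combined with the bookkeeping required to keep the block decoupling uniform on the Cantor-like set $\Omega_*$. In Schnellmann's setting $\Omega_*$ is an interval and the bounded distortion is automatic; here one must simultaneously (i) ensure that the parameters surviving up to time $N$ still cover most of a neighbourhood of $a_*$, and (ii) preserve the transversality-based change of variables on each surviving cell. Handling the interaction between exclusion at scale $N^{-\beta}$ and the oscillatory integrals at frequency $t\sim \sqrt{L}$ in the ASIP characteristic-function condition is where the argument is most delicate, and where the fractional response of \cite{BBS} is used in an essential way to quantify the loss of regularity.
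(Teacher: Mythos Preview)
Your first two paragraphs are broadly on target: the construction of $\Omega_*$ via a polynomial Benedicks--Carleson exclusion (Proposition~\ref{keyprop}) and the import of uniform decay and fractional response from \cite{BBS} (Propositions~\ref{p.udc}, \ref{p.udc'}, Lemma~\ref{l.varregularity}) are exactly what the paper does.

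The third paragraph, however, describes a route the paper explicitly avoids and which does not obviously go through. You propose to control characteristic functions $\int_{\Omega_*} e^{it S_{k,L}(a)}\,dm_*(a)$ via transfer-operator/spectral methods and then invoke a general ASIP criterion ``\`a la Gou\"ezel or Cuny--Merlev\`ede.'' But the $\xi_i$ are \emph{not} iterates of a fixed map and there is \emph{no} invariant measure on parameter space; the paper notes (footnote citing \cite{gouezel}) that it is unclear how to apply spectral techniques here. The local parameter-to-phase-space switch (Lemma~\ref{l.switch}) only lets you compare $\prod_\ell \xi_{n_\ell}(a)$ with $\prod_\ell \varphi_{a_0}(T_{a_0}^{n_\ell-n}(y))$ for a \emph{fixed} $a_0\in\omega$, yielding moment estimates on small intervals $\omega$; it does not give you control of a highly oscillatory integrand $e^{itS_{k,L}}$ uniformly over the Cantor set. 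Your ``alternating long/short blocks'' description is also not what Schnellmann does.

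What the paper actually does is the Philipp--Stout \cite{ps} route: blocks $\III_j$ of size $[j^{2/3}]$ (no gaps), step-function approximations $\chi_i=E(\xi_i\mid\FF_{r_i})$, block sums $y_j$, a law of large numbers for $y_j^2$ via second and fourth moment bounds plus G\'al--Koksma (Theorem~\ref{t.galkoksma}), a martingale-difference approximation $Y_j$, and finally Skorokhod's representation theorem (Theorem~\ref{t.skorokhod}). The key analytic input is Proposition~\ref{p.mainestimate}, a \emph{second-moment} estimate on blocks, which is exactly what the switching lemma can deliver. The exponent $2/5$ comes from the block size $j^{2/3}$ (see the footnote at the start of Section~\ref{s.asip}). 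Your oscillatory-integral step would need a replacement argument of this moment-based type.
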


The value $a_*=4$ is not covered by our arguments for technical
reasons, since $c_1$ and $c_2$ then lie on the boundary of $I$
(see e.g.\ Footnote~\ref{a=4}).
It is possible (but a bit cumbersome) to handle (a one-sided
neighbourhood of) this value by a change of coordinates as in
\cite[Lemma~2.1]{tsujii}.

We expect that the methods\footnote{For example,
    \cite[Lemma~8.1]{A} would replace \cite[Lemma~V.6.5]{DMS} in
    the proof of Proposition~\ref{keyprop}.} 
    of this paper can
  be extended to the case when the ``root'' $a_*$ is 
  mixing, but only Collet--Eckmann and polynomially recurrent (for large enough
  $\kappa_0>1$), 
  instead of Misiurewicz.   We restrict here to
Misiurewicz parameters $a_*$, for the sake of simplicity.
What is most desirable in view of our original motivation to extend
the analysis of \cite{DLS}, is to obtain a ``fatter'' Cantor set $\Omega_*$
(as opposed to a fatter set of root points $a_*$): Indeed, this extension will probably
require the ASIP on a set $\widetilde \Omega$ for which there
exist $\beta>1$ and a full measure subset $\widetilde \Omega_1\subset \widetilde \Omega$  such that
\begin{equation}\label{tscond0}
  \lim_{\epsilon \to 0}
  \frac{m([a-\epsilon, a+ \epsilon] \setminus  \widetilde \Omega)}{\epsilon^\beta} = 0 \, , \,\,
\forall a \in \widetilde \Omega_1
  \, .
\end{equation}
(See \cite[(5), Prop.~F]{BS2}, note that
\cite[Lemma~E]{BS2} even uses $\beta<2$ close to $2$.) Property \eqref{tscond0} is known for all $\beta <2$ for the sets
$\widetilde \Omega_1\subset \widetilde \Omega$ studied\footnote{Beware that Tsujii's result
  cannot be used immediately. In particular, the main argument in
  the construction of the parameter set in Theorem~1 of
  \emph{Pre-threshold fractional susceptibility function:
    holomorphy and response formula,}
arxiv.org/2203.07942,  is flawed.} 
by Tsujii \cite{tsujii}.    For our Cantor set
  $\Omega_*\subset \Omega_{BC}$,  we expect that for any $\kappa>1$, taking $\kappa_0$ large enough in
  Proposition~\ref{keyprop} the factor  $\epsilon^\beta$ in \eqref{tscond0} must be replaced by
  $\epsilon |\log \epsilon|^{-\kappa}$  (see
  \eqref{momo'}), which does not seem good enough. 
Attaining the goal of our original motivation may thus require
establishing the ASIP on a Cantor set having larger density,  and thus   weakening the polynomial lower recurrence in the
  construction (see comments in the next paragraph). We view this as the most desirable improvement of our
main theorem.

\smallskip
To clarify the role of $\Omega_*$, it is useful to compare
Schnellmann's proof with ours. In \cite{DS}, Schnellmann studies suitable
transversal one-parameter families of piecewise expanding interval maps and obtains
a parameter ASIP on a set $\Omega_*$ which is just an interval $[0,\epsilon^\varphi]$ of parameters.
Indeed, existence of an exponentially mixing acim enjoying fractional response
 (with uniform bounds)  holds in an entire interval  $[0,\epsilon^\varphi]$
in his setting
\cite[Prop.~4.3, Lemma4.5]{DS}. So $[0,\epsilon^\varphi]$  is the
baseline parameter space for his analysis. Some parameters in this baseline
cause difficulties (``exceptionally small sets''), but Schnellmann  can get away with just \emph{ignoring} them (taking advantage
of the fact that their total measure is controlled \cite[(III), Theorem~3.2, Lemma~4.1, proof of Lemmas~6.1--6.2]{DS}) instead of \emph{excluding} them
from the baseline.
Our situation is different, since  we need to \emph{exclude}
parameters which do not have an acim or for which exponential mixing
or fractional response (with uniform bounds) does not hold: Our baseline
set is a Cantor set, and the best we can do is to make it as fat as possible.

The polynomial recurrence \eqref{fordistort} in our
  parameter exclusion (Proposition~\ref{keyprop}), which causes the
``thinness'' of $\Omega_*$,
is needed\footnote{See also \eqref{err1} and \eqref{err2}, which may cause a different error exponent.} to apply the results of \cite{BBS}
in Sections~\ref{s.pud} and \ref{regvar} (Propositions~\ref{p.udc} and \ref{p.udc'} on uniform decorrelation
and fractional response, and its consequence, Lemma~\ref{l.varregularity}). 
Due to this we  already \emph{exclude} the parameters which could
have exceptionally small image and we do not need to
\emph{ignore} them (Lemma~\ref{l.goodpartition}, 
compare also the proof of \cite[Lemma~6.1]{DS} with \eqref{111} below).
In addition, we get an easy proof of  the  local distortion estimate 
  \eqref{eq.distortion1}.
If  the required consequences of \cite{BBS} could be 
extended to sets of parameters which  enjoy only exponential recurrence bounds,
    then we could use the (fatter) Benedicks--Carleson Cantor set $\Omega^\varphi_{BC}$ 
as a  baseline instead of $\Omega_*$ (if necessary, the
Benedicks--Carleson technique  could
be replaced by ideas from  Tsujii \cite{tsujii}, Avila--Moreira
\cite{avilamoreira2003}  or Gao--Shen
\cite{GS1}).
Next, one could try to \emph{ignore} 
the parameters with exceptionally small images
in Lemma~\ref{l.goodpartition}.
For \eqref{eq.distortion1}, see also Footnote~\ref{crude}.

\medskip

We also note for the record here that  the characteristic function
$1_{\widetilde \Omega}$ of a fat enough Cantor
set $\widetilde \Omega$ belongs to a Sobolev space
$H^s_q(I)$ with $s>0$ (see \cite[Props 4.9 and 4.10]{HM}).
Thus, working with a Cantor set of larger density may simplify some
of our  arguments
 (in the proof of Proposition~\ref{p.mainestimate}, e.g.).

\medskip

Finally, the results of this paper probably extend to more
general families of smooth unimodal maps. In the present ``proof
of concept'' work, we choose to restrict to the quadratic family.

\subsection{Structure of the Text}

Schnellmann pointed out \cite[p.~370]{DS} that
the ``Markov partitions'' given by the intervals in
the celebrated Benedicks--Carleson \cite{BC1, BC2} parameter exclusion
construction would be the key to extend his
result to nonuniformly expanding interval maps.

Our paper carries out this plan and is organised as follows: After recalling basic facts in
Section~\ref{ss.uniform}, we adapt in Section~\ref{ss.partitions}
the  Benedicks and Carleson 
procedure  to construct, in a neighbourhood of a
 topologically mixing Misiurewicz point $a_*$, a sequence
$\Omega_n\subset \Omega_{n-1}$ where $\Omega_n$ is
a finite union of intervals in $\PP_n$. At each step, some
intervals in $\PP_n$ are partitioned and the intervals which do
not satisfy a time-$n$ {\it polynomial} recurrence assumption are
excluded.  The remaining Cantor set $\Omega_*(a_*)=\cap_n \Omega_n$ is a
positive Lebesgue measure set of parameters satisfying the
Collet--Eckmann property, polynomial returns and 
distortion control, with uniform constants. (Our distortion bound \eqref{eq.distortion1} is new.)  In addition, the
construction ensures that there are no ``exceptionally small''
sets (Lemma~\ref{l.goodpartition}).   Applying
results from \cite{BBS}, this ensures uniform exponential decay
of correlations (Proposition~\ref{p.udc}) and fractional response
(Proposition~\ref{p.udc'}), from which we obtain
regularity of the map $a\mapsto\sigma_a$ (Lemma~\ref{l.varregularity}). 

Sections~\ref{s.switch} and \ref{s.asip} contain the proof of the
ASIP along the lines of \cite{DS}: First approximate the Birkhoff
sum by a sum of blocks of polynomial size
(Sections~\ref{ss.blocks} and \ref{LLN}), then
(Section~\ref{last}) approximate these blocks by a martingale
difference sequence $Y_j$ and apply Skorokhod's representation
theorem linking a martingale with a Brownian motion (see
\cite[Section~3]{ps}).  The usual application
of  the approach of \cite[Chapter~7]{ps} in dynamics uses a
strong independence condition (see \cite[7.1.2]{ps}) which we do
not have (the $\xi_i$'s are not iterations of a fixed map
and there is no\footnote{See the example (see \cite{kac}, p.~646)
  discussed in \cite{DS}.  Also, as pointed out by \cite{DS}, it
  is not clear how to apply the spectral techniques of
  \cite{gouezel} in our setting.} underlying invariant
measure). We replace this strong independence condition by
uniformity of constants in the exponential decay of correlations
(given by \cite{BBS}) which we translate into properties for
the  $\xi_i$ by switching from parameter to phase space (see
Proposition~\ref{p.mainestimate}), giving estimates similar to
those in \cite[Section~3]{ps}.

\bigskip
 
For $\varpi\in (0,1)$, we shall denote by $C^\varpi$ the set of
$\varpi$-H\"older continuous functions
$\varphi \colon I\to \real$, putting
$\|\varphi\|_\varpi=\sup |\varphi|+H_\varpi(\varphi)$, with
$H_\varpi(\varphi)$ the smallest $H_\varpi$ such that
$|\varphi(x)-\varphi(y)|\le H_\varpi |x-y|^\varpi$ for all $x$,
$y$ in $I$. The letter $C$ is used throughout to represent a (large)
uniform constant, which may vary from place to place.

\section{Bounds for the Quadratic Family. The Cantor Set $\Omega_*(a_*)$}

\subsection{Basic Properties}
\label{ss.uniform}

Clearly, the maps 
\[
  a\mapsto T'_a(x)=\partial_xT_a(x)=a(1-2x) \, ,\qquad
  x\mapsto\partial_a T_a(x)=x(1-x)
\]
are Lipschitz continuous uniformly in $x\in I$ and
$a\in (a_\mathrm{mix},4]$, and in addition
\begin{equation}
  \label{eq.la}
  \sup_{x\in I}|T_a'(x)|\le \Lambda:=4\, , \qquad \forall a \in
  (a_\mathrm{mix},4] \, .
\end{equation}  
Each $T_a$ has two monotonicity intervals, with partition points
$0$, $c=1/2$, and $1$.
The following easy lemma replaces\footnote{We do not need 
    as in \cite[(30)]{DS} that $x$ has the same combinatorics
    under $T_{a_1}$ and $T_{a_2}$ up to the $(n-1)$th iteration. We
    thus do not need any analogue of \cite[Sublemma~5.4]{DS}.}
  \cite[(30)]{DS}:
  
\begin{lemma}
  There exists $C<\infty$ such that, for any
  $a_1,a_2\in (2,4]$, we have
  \begin{equation}
    \label{eq.xaa} |T_{a_1}^n(x)-T_{a_2}^n(x)|\le
    C\Lambda^{n}|a_1-a_2|\, ,\qquad \forall x\in I\, , \quad
    \forall n \geq 1 \, .
  \end{equation} 
\end{lemma}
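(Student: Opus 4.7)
The plan is a standard telescoping/induction-on-$n$ argument, exploiting that $\partial_a T_a(x)=x(1-x)$ is uniformly bounded on $I$ and that $\sup_{x\in I}|T'_a(x)|\le\Lambda$ for all $a\in(a_\mathrm{mix},4]$.

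First I would establish the one-step estimate: for every $x\in I$,
\[
  |T_{a_1}(x)-T_{a_2}(x)|=|a_1-a_2|\cdot x(1-x)\le \tfrac14|a_1-a_2|\,,
\]
so the lemma holds for $n=1$ with constant $1/4$.

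Next I would run the induction on $n$. Writing $\Delta_n(x):=|T_{a_1}^n(x)-T_{a_2}^n(x)|$ and inserting a midpoint by triangle inequality,
\[
  \Delta_n(x)\le \bigl|T_{a_1}(T_{a_1}^{n-1}(x))-T_{a_1}(T_{a_2}^{n-1}(x))\bigr|
  +\bigl|T_{a_1}(T_{a_2}^{n-1}(x))-T_{a_2}(T_{a_2}^{n-1}(x))\bigr|\,.
\]
The first summand is bounded by $\Lambda\,\Delta_{n-1}(x)$ by the mean value theorem together with \eqref{eq.la}, and the second summand is bounded by $\tfrac14|a_1-a_2|$ by the base case applied at $y=T_{a_2}^{n-1}(x)\in I$ (using that $T_a$ maps $I$ into $I$ for $a\in(2,4]$). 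This yields the recursion
\[
  \Delta_n(x)\le \Lambda\,\Delta_{n-1}(x)+\tfrac14|a_1-a_2|\,.
\]
Iterating from $\Delta_0(x)=0$ gives
\[
  \Delta_n(x)\le \tfrac14|a_1-a_2|\sum_{k=0}^{n-1}\Lambda^k
  =\tfrac{|a_1-a_2|}{4(\Lambda-1)}\,(\Lambda^n-1)
  \le C\,\Lambda^n|a_1-a_2|
\]
with $C=1/(4(\Lambda-1))=1/12$, which is the claimed bound.

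There is really no obstacle here: uniform boundedness of $\partial_a T_a$ on $I$ and the uniform Lipschitz bound $\Lambda=4$ on $T'_a$ do all the work, and $I$ is forward-invariant for each $T_a$ with $a\in(2,4]$ so the induction stays inside $I$. The $\Lambda^n$ growth is of course what one must accept from naive propagation of perturbations in a nonuniformly expanding system; the finer parameter-dependence estimates that beat this growth on the postcritical orbit will come later from transversality \eqref{eq.transversal} rather than from this lemma.
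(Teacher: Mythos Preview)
Your proof is correct and follows essentially the same telescoping/induction argument as the paper: split $T_{a_1}^n(x)-T_{a_2}^n(x)$ into a parameter-change term bounded by $|a_1-a_2|$ (you use the sharper $\tfrac14|a_1-a_2|$) and a Lipschitz term bounded by $\Lambda$ times the previous difference, then iterate. The only cosmetic difference is that the paper inserts the midpoint $T_{a_2}(T_{a_1}^{n-1}(x))$ rather than your $T_{a_1}(T_{a_2}^{n-1}(x))$, and takes $C=\sum_{j\ge0}\Lambda^{-j}$ rather than your smaller $C=1/(4(\Lambda-1))$.
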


\begin{proof}
  Clearly, $|T_{a_1}(x)-T_{a_2}(x)|\le |a_1-a_2|$.  For $n\ge 2$,
  using the definition \eqref{eq.la} of $\Lambda$, and setting
  $C=\sum_{j=0}^\infty \Lambda^{-j}$, we get
  \begin{align*}
    |T_{a_1}^n(x)&-T_{a_2}^n(x)| \\
                 &\leq
                   |T_{a_1} (T_{a_1}^{n-1} (x)) - T_{a_2}
                   (T_{a_1}^{n-1} (x))|
                   +|T_{a_2} (T_{a_1}^{n-1} (x)) - T_{a_2}
                   (T_{a_2}^{n-1} (x))| \\
                 &\leq |a_1-a_2| + \Lambda |T_{a_1}^{n-1} (x) -
                   T_{a_2}^{n-1} (x)|\\
                 &\leq |a_1-a_2| (1 + \Lambda) + \Lambda^2
                   |T_{a_1}^{n-2} (x) - T_{a_2}^{n-2} (x)| \leq
                   \ldots \\
                 & \leq  |a_1-a_2| \sum_{j=0}^{n-1} \Lambda^j
                   \leq C \Lambda^n |a_1-a_2| \, .
                   \qedhere
  \end{align*}
\end{proof}

\subsection{A Polynomial  Benedicks--Carleson
  Construction ($\Omega_*(a_*)$, $\PP_n$)}
\label{ss.partitions}

For each $j\ge 0$, the function $x_j(a)=T_a^{j+1}(c)$ is a map
from the parameter space $(a_\mathrm{mix},4]$ to the phase space
$I=[0,1]$, with $x_j(a)\in K(a)$ for all $a$.  The transversality
condition \eqref{eq.transversal} says that the derivatives of
$x_j$ and $T_a^j$ are comparable at $a_*$,
so that
statistical properties (such as the ASIP) can be transferred from
the maps $x\mapsto T_a^j(x)$ to the maps $a\mapsto x_j(a)$.
To make this precise, we next construct a sequence of partitions
in the parameter space.  Our starting point is the following
variant of the Benedicks and Carleson Cantor set\footnote{See \eqref{defBC}
for the construction of $\Omega_{BC}$.} $\Omega_{BC}=\Omega_{BC}(a_*)$
(see \cite{BC1, BC2}) associated to a
 Misiurewicz parameter $a_*$ (which is automatically transversal):

\begin{proposition}[The Cantor set
  $\Omega_*=\Omega_*(a_*,\kappa_0)$]
  \label{keyprop}
  Let $a_*\in (a_\mathrm{mix},4]$ be a  Misiurewicz parameter. There
  exist
    $\lambda_{CE}\in (1, \Lambda)$ and $C_0\in (0,1)$ such that,
for any $d_1\in (0,C_0\log \lambda_{CE}/4)$ and $d_0>0$, there
    exists $\epsilon>0$ such that, for any
     $\kappa_0 >1/d_1$,
  for all large enough $N_0\ge 1$ 
there exists a sequence  $\P_j$ of finite sets of pairwise
  disjoint subintervals of
  \[
    \omega_0 := [a_*-\epsilon, a_*+\epsilon] \cap
    (a_\mathrm{mix},4] \, ,
  \]
  such that  $\PP_1=\PP_2=\ldots=\PP_{N_0}$ and, setting 
  \[
    \Omega_{*} = \Omega_*(a_*,\kappa_0):=\bigcap_{j \geq N_0}
    \Omega_j \, , \qquad \text{with} \qquad \Omega_j:=\bigcup_{
      \omega \in \P_j} \omega \, ,
  \]  
we have  $\Omega_{j+1}\subset \Omega_j$ for $j\ge N_0$, and\footnote{The first bound of \eqref{monot} implies
    that $a\mapsto x_j(a)=T_a^{j+1}(c)$ is monotone on
    $\om \in \PP_j$.}
  \begin{align}
    \label{Markov}
    &\forall j \geq 1\, , \quad \forall \omega \in \P_j \, ,
      \quad \forall 0 \leq \ell < j \, , \quad \exists 
      \omega'\in  \P_\ell \mbox{ such that }
      \omega \subset  \omega'\, ,\\
    \label{monot}
    & |x'_j(a)|>0\, ,\quad |T_a^{j+1} (c) - c| > 0 \, , \quad
      \forall a \in \omega \, , \quad \forall \om \in \PP_j\, ,
      \quad \forall j\ge 0\, ,
  \end{align}
  and there exists\footnote{Note that \eqref{l.transversality}
    replaces \cite[Lemma~2.4]{DS}.}
  $C<\infty$ such that, for all $j\geq N_0$ and
  $\om \in \P_j$,
  \begin{align}
    \label{eq:CE}
    &|(T_a^n)'(T_a(c))|  \geq  \lambda_{CE}^n\, ,\qquad
    && \forall N_0 \leq n \leq j\, , \quad \forall a \in \omega\, , \\
    \label{l.transversality}
    &\frac{1}{C}\leq
      \biggl|\frac{x_n'(a)}{(T_a^{n})'(T_a(c))} \biggr| \leq    C\, ,\qquad
    &&\forall N_0  \leq n \leq j\, , \quad \forall a \in \omega\, , \\
    \label{small}
    &|\tilde \omega |\leq C \lambda_{CE}^{-n}
      |x_n (\tilde \omega)|\, , \qquad
    && \forall N_0\leq n \leq j\,
      ,\quad \forall \tilde \omega\subset  \omega\, ,
  \end{align}
  and, moreover,
  \begin{equation}\label{eq:polapproach}
    |T_a^{n+1}(c)-c| > n^{-\kappa_0}\, ,\qquad
    \forall N_0  \leq n \leq j\, , \quad \forall a \in \omega\, .
  \end{equation}
  Finally,
  we have that $a_*\in \Omega_*$ is a Lebesgue density point of\/ $\Omega_*$,
with
  \begin{equation}\label{lem:measureinomega}
    |\Omega_*|\ge (1-d_0 \cdot e_j)|\Omega_{j-1}|\, ,
    \qquad \forall j \geq N_0\, , \quad \text{where} \quad e_j := \sum_{n =
      j}^\infty n^{-d_1 \cdot \kappa_0} \, ,
  \end{equation}
and we  have the more precise (semi-local) bound
   \begin{equation}\label{lem:measureinomega1}
 \sum_{\substack{\omega\in \PP_{\ell}\\ \omega\subset  \omega'}} 
 |\omega \setminus  (\omega\cap \Omega_*)| \le d_0 \cdot e_{\ell-\ell'} | \omega'|\, , \quad
\forall  \omega' \in\PP_{\ell'}\, , 
\,\forall \ell \ge \ell'\ge N_0\, .
\end{equation}
\end{proposition}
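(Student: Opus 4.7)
The plan is to adapt the Benedicks--Carleson exclusion scheme to the polynomial recurrence regime \eqref{eq:polapproach}, tracking measure losses step by step. Since $a_*$ is Misiurewicz, the orbit of $c$ under $T_{a_*}$ remains at a positive distance from $c$, and a standard Ma\~n\'e argument yields some $\lambda_*>1$ with $|(T_{a_*}^n)'(T_{a_*}(c))|\ge\lambda_*^n$ for all $n\ge 1$. Fix $\lambda_{CE}\in(1,\lambda_*)$ and let $C_0\in(0,1)$ denote the constant controlling the binding-period losses from the distortion analysis for the quadratic family. By continuity of the dynamics in $a$ via \eqref{eq.xaa}, for $\epsilon$ small enough and $N_0=N_0(\epsilon)=O(|\log\epsilon|)$ large enough, the estimates \eqref{eq:CE}--\eqref{eq:polapproach} all hold on the full interval $\omega_0=[a_*-\epsilon,a_*+\epsilon]\cap(a_{\mathrm{mix}},4]$ for every $1\le n\le N_0$; the polynomial recurrence is free since the Misiurewicz orbit is uniformly bounded away from $c$. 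One sets $\PP_1=\cdots=\PP_{N_0}=\{\omega_0\}$.

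For $j>N_0$, I would construct $\PP_j$ from $\PP_{j-1}$ as follows. For each $\omega\in\PP_{j-1}$ the map $a\mapsto x_j(a)$ is monotone on $\omega$ by \eqref{monot}, so one pulls back a partition of $x_j(\omega)$. Chop the complement of $c$ into dyadic shells $S_k=\{y:2^{-k-1}<|y-c|\le 2^{-k}\}$ further subdivided into pieces of length $\sim 2^{-k}/k^2$ as in the Benedicks--Carleson scheme, declare an $x_j$-preimage in $\omega$ to belong to $\PP_j$ iff its image piece lies outside the ``critical neighbourhood'' $\{|y-c|\le j^{-\kappa_0}\}$, and exclude the rest from $\Omega_j$. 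This enforces \eqref{eq:polapproach} at time $j$ and preserves the nesting property \eqref{Markov} and monotonicity \eqref{monot} by construction.

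The propagation of \eqref{eq:CE}, \eqref{l.transversality}, \eqref{small} onto each new $\omega'\in\PP_j$ relies on a binding-period analysis at close returns. Denoting return times $N_0<m_1<\cdots<m_s\le j$ with distances $r_{m_i}\ge m_i^{-\kappa_0}$, the orbit of $x$ shadows that of $T_a(c)$ during binding windows of length $p_i=O(\kappa_0\log m_i/\log\lambda_{CE})$, and the derivative loss $1/r_{m_i}$ at each return is recovered inside the window. This yields the uniform rate \eqref{eq:CE} with exponent $\lambda_{CE}$; the transversality ratio \eqref{l.transversality} follows from the derived bounded-distortion estimate for $a\mapsto x_j'(a)$ on $\omega'$; and \eqref{small} follows from \eqref{l.transversality} and \eqref{eq:CE} via the chain rule. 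The constraint $d_1<C_0\log\lambda_{CE}/4$ is exactly what guarantees that the summed binding losses never absorb the exponential expansion.

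For the measure bound, the excluded set at step $j$ inside any $\omega\in\PP_{j-1}$ has measure bounded, by \eqref{small} and the bounded distortion of $x_j|_\omega$, by $d_0\, j^{-d_1\kappa_0}|\omega|$, the reduction from $j^{-\kappa_0}$ to $j^{-d_1\kappa_0}$ absorbing the accumulated binding and shell-decomposition losses accrued up to time $j$. Summing these losses over $\ell\ge j$ yields \eqref{lem:measureinomega}; restricting the summation to $\omega\subset\omega'$ and starting at level $\ell'$ gives the semi-local bound \eqref{lem:measureinomega1}. The main obstacle is precisely this reduction-of-exponent step: one must verify that polynomial recurrence is sufficient to keep both the pointwise dynamical bounds and the per-step measure losses under control, with the exponent $d_1\kappa_0$ large enough (i.e., $\kappa_0>1/d_1$) that $\sum_n n^{-d_1\kappa_0}$ is summable, hence $|\Omega_*|>0$ with $a_*$ a Lebesgue density point.
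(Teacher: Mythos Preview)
Your approach is the paper's: a Benedicks--Carleson construction with the exponential exclusion threshold replaced by the polynomial one $|T_a^{j+1}(c)-c|>j^{-\kappa_0}$, and the measure accounting redone accordingly. Two technical points are worth sharpening relative to your sketch.

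First, the partition $\PP_j$ is refined only at \emph{essential free returns} (where the bounded-distortion estimate for $x_j|_\omega$ is available), not at every $j>N_0$; during bound periods and at inessential returns one keeps $\omega$ intact. Second, the ``reduction-of-exponent'' step you flag as the main obstacle hinges on a specific lower bound for $|x_j(\omega)|$, not on \eqref{small}. If the previous essential return of $\omega$ was at scale $e^{-r'}$ with $r'\le\kappa_0\log j$, then the binding period satisfies $p\ge C_0r'$, and the derivative gain during binding is at least $\lambda_{CE}^{p/4}\ge e^{d_1r'}$ (this is precisely where $d_1<C_0\log\lambda_{CE}/4$ enters). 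Hence $|x_j(\omega)|\gtrsim e^{(-1+d_1)r'}/(r')^2\ge j^{-\kappa_0(1-d_1)}$ up to logarithms. Dividing the excluded $x_j$-length $\sim j^{-\kappa_0}$ by this lower bound yields the relative loss $\lesssim j^{-d_1\kappa_0}$, which is the per-step estimate feeding \eqref{lem:measureinomega} and \eqref{lem:measureinomega1}. The paper also carries a separate ``free assumption'' $(FA_j)$ exclusion (bounding the fraction of time spent in bound periods), contributing an additional but exponentially small loss $e^{-j\bar\eta}|\omega_0|$ that is harmless in the polynomial summation.
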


See Lemma~\ref{l.goodpartition} below regarding the absence of
exceptionally small sets and Section~\ref{sec.distortion} for a H\"older
distortion property refining \eqref{classic}.

Clearly, \eqref{eq:polapproach} means that any $a\in \Omega_*$ is
$(N_0,\kappa_0)$-polynomially recurrent. 

 The bound
\eqref{lem:measureinomega} implies that the Cantor set $\Omega_*$
has positive Lebesgue measure as soon as $d_1\cdot \kappa_0>1$ (and $N_0$ is large
  enough). 
 Proposition~\ref{keyprop} holds for such 
  $\kappa_0$, but
  we will need the stronger condition
$d_1 \cdot \kappa_0 \geq 11/3$  to use \eqref{lem:measureinomega} 
   in the proof of Proposition~\ref{p.mainestimate}
(and $d_1\cdot \kappa_0>9/5$ for Lemma~\ref{l.LLNy}).

The local bound \eqref{lem:measureinomega1} is used in the proof of Lemma~\ref{l.chiapprox}.

\begin{proof}[Proof of Proposition~\ref{keyprop}]
  Let $r_0 \ge 2$ be a large integer (to be chosen later, with
  $\epsilon\to 0$ as $r_0$ increases).  For $r\ge r_0$, set
  $I_r=I_r^-\cup I_r^+$, where
  \[
    I_r^+ = [ c+e^{-r-1} ,c+ e^{-r} )\,,\,\, I_r^- = ( c-e^{-r} ,
    c-e^{-r-1} ]\,,\,\, U_r=(c-e^{-r},c + e^{-r})\, ,
  \]
  and cover each $I_r^\pm$ by $r^2$ pairwise disjoint intervals
  $I_{r,\ell}^\pm$ of equal size, each $I_{r,\ell}^\pm$
  containing its boundary point closest to $c$.  Let\footnote{The
    constant $\alpha_{BC}$ is usually called $\alpha$, but we
    shall need the letter $\alpha$ for another purpose in
    \eqref{need}.} $\beta_{BC} > \alpha_{BC}>0$ where
  \[
    e^{-n \alpha_{BC}}\le n^{-\kappa_0}\, , \, \, \forall n \ge
    N_0\, ,
  \]
  for $N_0$ a large integer to be chosen later.

  For $a\in (a_\mathrm{mix},4]$, $\nu \ge 1$, and $r\ge r_0$ such
  that $T^\nu_a(c)\in I_r$, the \emph{binding time}
  $p(a)=p(r,a, \nu)$ of $U_r$ with $T^\nu_a(c)$ is the maximal
  $p\in \integer_+ \cup \{\infty\}$ such that
  \[
    |T^{j}_a (x) - T^{j+\nu}_a (c)| \leq e^{-j\beta_{BC} }
    \,,\qquad \forall 1 \leq j\leq p\, ,\quad \forall x\in U_r\,
    .
  \]
  The
  \emph{first free return time} $\nu_1(a)$ of $a\in (a_\mathrm{mix},4]$
  is the smallest integer $j \geq 1$ for which $T_a^j(c) \in U_{r_0}$.
  For an interval $\omega\subset (a_\mathrm{mix},4]$, the
  \emph{first free return time} $\nu_1(\omega)$ is the smallest
  integer $j \geq 1$ for which there exists $a\in \omega$ with
  $T_a^j(c) \in U_{r_0}$.  If there exists  $r=r(\omega)$  such
  that $x_{\nu_1-1}(\omega)\subset I_r$ (recall that
  $T^{\nu_1}_a(c)=x_{\nu_1-1}(a)$), we define the \emph{first
    binding time} of $\omega$ by 
  $p_1(\omega) = \min_{a \in \omega}p(r,a,\nu_1(\omega))$.  For
  $i\ge 2$, define inductively the $i$th \emph{free return time}
  of (suitable) $\omega$ to be the largest integer
  $\nu_{i}(\omega)>\nu_{i-1}(\omega)+p_{i-1}(\omega)+1$ such that
  \[
    T^{j}_a(c)\cap U_{r_0}=\emptyset\, , \qquad \forall
    \nu_{i-1}(\omega)+p_{i-1}(\omega)+1\le j<\nu_{i}(\omega) \,
    ,\,\,\forall a \in \omega\, ,
  \]
  and, for $r(\omega)$ such that
  $x_{\nu_{i-1}-1}(\omega)\subset I_r$, set the $i$th
  \emph{binding time} of $\omega$ to be
  \[
    p_{i}(\omega) = \min_{a \in \omega} p(r,a,\nu_{i-1} (\omega))
    \, .
  \]
  (Similarly, define inductively for $i\ge 2$ and $a$ such that
  $ T^{\nu_{i-1}}_{a}(c)\in I_r$, the pointwise binding times
  $p_i(a)$ and free returns $\nu_i( a)$.)  The iterates between
  $\nu_i(\omega)$ and $\nu_i(\omega)+p_i(\omega)$ form the $i$th
  bound period of $\omega$, those between
  $\nu_{i-1}(\omega)+p_{i-1}(\omega)+1$ and $\nu_{i}(\omega)-1$
  form its $i$th free period. Finally, if there exist
  $a\in \omega$ and $j\ge \nu_1(\omega)$ such that
  $T_a^j(c)\in U_{r_0}$, we say that $j$ is a {\it return time}
  of $\omega$.  (Return times either are free returns
  $\nu_i(\omega)$ or they occur during the bound period.)

  Note that for any fixed $\epsilon$, setting
    $\omega_0=[a_*-\epsilon,a_*+\epsilon]$, there exists
    $N_\epsilon$ such that $x_{N_\epsilon}(\omega_0)$ contains a
    neighbourhood of $c$ (indeed, by transversality, for any
    $a\in \omega_0\setminus\{a_*\}$, there exists $N(a)$ such
    that $T_{a_*}^{N(a)+1}(c)$ and $T_{a}^{N(a)+1}(c)$ lie on
    different sides of $c$).  In particular,
    $\nu_1(\omega_0)<\infty$. Similarly, all $\nu_i(\omega_0)$
    and $p_i(\omega_0)$ are finite.  \smallskip

  Let $W_{a_*}$ be a neighbourhood of $c$ disjoint from
  $\{T^n_{a_*}(c)\mid n\ge 1\}$.  From now on, we
    only consider $r_0$ large enough such that
    $\overline U_{r_0-1}\subset W_{a_*}$.  Set
    $W_{a_*,r_0}^+=W_{a_*}\cap [c+e^{-r_0},1]$ and
    $W_{a_*,r_0}^-=W_{a_*}\cap [0,c-e^{-r_0}]$.  We claim that,
    for any fixed large $r_0$, we have that
    $x_{\nu_1(\omega_0)-1}(\omega_0)$
    contains\footnote{This fact is used before
        \cite[Lemma V.6.8]{DMS}. (There, $W_{a_*}$ is mistakenly
        mentioned instead of $W^\pm_{a_*,r_0}$. Our $r_0$ is
        denoted by $\Delta$ and our $x_n(a)$ is denoted
        $\xi_{n+1}(a)$ in \cite{DMS}.)}  $W_{a_*,r_0}^+$ or
    $W_{a_*,r_0}^-$ for all small enough $\epsilon$. Indeed,
    $x_{\nu_1(\omega_0)-1}(\omega_0)$ is an interval intersecting
    $U_{r_0}$, and $x_{\nu_1(\omega_0)-1}(\omega_0)$ contains
    $T_{a_*}^{\nu_1(\omega)}(c)\notin W_{a_*}$.

  For small $\epsilon>0$ (to be chosen depending on $r_0$), the
  sequence $ \PP_j$ can now be defined\footnote{We refer
    throughout to \cite[Section V.6]{DMS}. The original ideas and
    key estimates appeared previously in the work of Benedicks
    and Carleson \cite{BC1, BC2}.  See Footnote~\ref{orig}.}
  inductively: Start with the single interval
  $ \PP_0= \PP_1=\ldots = \PP_{N_0} =\{\omega_0 \}$, for
  $\epsilon$ small enough such that $\nu_1( \omega_0)\ge N_0$
  (note that $\nu_1(\omega_0)$ increases if $r_0$ increases or
  $\epsilon$ decreases).

  \smallskip

  For $j> N_0$, each $ \omega \in \PP_{j-1}$ is partitioned into
  finitely many (possibly just one) intervals, at least one of
  which will be included into an auxiliary partition $\PP'_{j}$,
  as follows:

  If $j$ is not a free return\footnote{That is, either $j$ is not
    a return, or it is a return within the bound period.}  time
  of $ \omega$, we include $\omega$ in $\PP'_{j}$. If $j$ is a
  free return time of $\omega$ but $x_{j-1}( \omega)$ does not
  contain an interval $I^\pm_{r,\ell}$ (we call this an {\it
    inessential (free) return}), we also include $\omega$ in
  $\PP'_{j}$.

  Otherwise, $j$ is a free return time of $ \omega$ such that
  $x_{j-1}( \omega)$ contains at least one interval
  $I_{r,\ell}^\pm$. We call this an {\it essential (free)
    return.} In that case, we decompose $x_{j-1}( \omega)$ into
  the following intervals:
  \[
    x_{j-1}( \omega)\setminus U_{r_0} \, ,\qquad \{x_{j-1}(
    \omega)\cap I_{r,\ell}^\pm \mid r \geq r_0\, ,\, 1\leq \ell
    \leq r^2\}\, .
  \]
  If $x_{j-1}( \omega)\setminus U_{r_0}\ne \emptyset$, but any of
  the (at most two) connected components of
  $x_{j-1}( \omega)\setminus U_{r_0}$ has size less than
  $e^{-r_0}(1-1/e) r_0^{-2}=|I^\pm_{r_0,\ell}|$, we join it to
  its neighbour
  $x_{j-1}(\omega)\cap I^\pm_{r_0,\ell}=I^\pm_{r_0,\ell}$.  If a
  connected component of $x_{j-1}( \omega)\setminus U_{r_0}$ has
  size larger than $S:=\sqrt {|U_{r_0}|}$, we subdivide it into
  pairwise disjoint intervals of lengths between $S/2$ and $S$.
  If $x_{j-1}(\omega)\cap I_{r,\ell}^\pm\ne \emptyset$, but
  $I_{r,\ell}^\pm$ is not contained in $x_{j-1}(\omega)$ (this
  can happen for at most two intervals $I_{r,\ell}^\pm$), we join
  $x_{j-1}(\omega)\cap I_{r,\ell}^\pm$ to its neighbour
  $x_{j-1}( \omega)\cap I^\pm_{r',\ell'}=I^\pm_{r',\ell'}$.
  Denote by $\{\hat \omega_{r,\ell}\mid r\ge r_0-1\}$ the
  partition of $x_{j-1}(\omega)$ thus obtained, where the index
  $(r,\ell)$ refers to the ``host'' interval $I_{r,\ell}$
  contained in $\hat \omega_{r,\ell}$ if $r\ge r_0$, while
  $\hat \omega_{r_0-1,\ell}\subset I\setminus U_{r_0}$.  Then we
  discard all intervals $\hat \omega_{r,\ell}$ for which
  \begin{equation}\label{polpol}
    e^r\ge (j-1)^{\kappa_0}\, .
  \end{equation}
  Mapping the remaining intervals via the inverse of the
  diffeomorphism (see \cite[Prop.~V.6.2]{DMS}) $x_{j-1}$ gives
  finitely many subintervals of $\omega$ which we include in $\P'_j$.
  Further intervals $\hat \omega_{r,\ell}$ need to be discarded from
  $\PP'_j$, using a requirement denoted $(FA_{j})$ or $(FA'_j)$ in
  \cite[Section V.6]{DMS}, \cite{Mo}, which finally defines
  $\PP_j$. For further use, we denote these remaining intervals by
  \begin{equation}
    \label{omegarl}
    \omega_{r,\ell}=x_{j-1}^{-1}(\hat\omega_{r,\ell})\, .
  \end{equation}

  It is well known\footnote{\label{orig}The original construction in
    \cite{BC1, BC2} is for $a_*=4$, see \cite{Mo} for a
    self-contained account. It extends to 
    Misiurewicz parameters: for CE parameters, the
    condition in \cite[Theorem~6.1]{DMS} is equivalent to
    \eqref{eq.transversal}, taking large enough $k$ in the last
    line of \cite[p.~406, Step~2]{DMS}.}
  \cite{BC1, BC2, DMS} that, if we replace the condition
  \eqref{polpol} (used to discard intervals) by the exponential
  condition
  \begin{equation}\label{discardexpl}
    \omega \cap I_{r,\ell}^\pm\ne \emptyset \quad \mbox{and} \quad
    e^r\ge e^{\alpha_{BC}  (j-1)} \, ,
  \end{equation}
  to construct sequences $\PP_j^{',BC}$ and $\PP_j^{BC}$, then
  there exists $\lambda_{CE}>1$ (called $e^\gamma$ in
  \cite[(V.6.4), Theorem~V.6.2]{DMS}) such that for any small
  enough $\beta_{BC}>\alpha_{BC}>0$ there exist $N_0'$ such that,
  if $r_0$ is large enough and $\epsilon>0$ small enough, then
  the $\PP_j^{BC}$ satisfy \eqref{Markov}--\eqref{small}
  (\eqref{eq:CE} is called $(EX_j)$ in \cite[Section V.6]{DMS})
  for some $C<\infty$, and the following condition
  (noted\footnote{Strictly speaking, the condition $(BA_j)$ does
    not involve the factor $2$, and a condition $(BA'_j)$
    requiring that for each $\omega\in \PP_j^{'BC}$ there exists
    $a\in \omega$ with $|T_a^{n+1}(c)-c| > e^{-n \alpha_{BC}}$
    for $N_0'\le n\le j$ is used in some lemmas. See
    \cite[Section V.6, Step 5]{DMS}.} $(BA_j)$ in the literature)
  holds for all $j\ge N_0'$
  \begin{align}
    \label{eq:expapproach}
    2 |T_a^{n+1}(c)-c| > e^{-n \alpha_{BC}}\, , \qquad
    \forall N'_0  \leq n \leq j\, , \ \forall a \in \omega \,
    \ \forall \omega \in \PP_j^{',BC} \, . 
  \end{align}
  Since $\lambda_{CE}$ does not depend on $\alpha_{BC}$, $N_0$,
  or $N_0'$, we may assume that
  \[
    14\alpha_{BC}<\log \lambda_{CE}
  \]
  and we may replace $N_0$ by $\max\{N_0, N_0'\}$.

  In particular \cite[Prop.~V.6.1, Lemma~V.6.1 b), c)]{DMS} give
  $\gamma_0>0$, $\lambda_{CE}=e^\gamma\in (1, e^{\gamma_0})$, and
  $C_0>0$ (independent of $r_0$ and $\epsilon$) such that, if
  $a\in \Omega_n$ and $\nu_{\ell+1}(a)\le n$, writing $p_\ell$,
  $\nu_\ell$ for $p_\ell(a)$, $\nu_\ell(a)$, we have
  \begin{equation}\label{fromProp2.2}
    \begin{cases}
      |(T_a^{\nu_{\ell+1}-(\nu_\ell +
        p_\ell+1)})'(T^{\nu_\ell+p_\ell+1}_a(c))| \ge C_0
      e^{\gamma_0 (\nu_{\ell+1}-(\nu_\ell + p_\ell))}  \\
      |(T_a^{p_\ell+1})'(T^{\nu_\ell}_a(c))| \ge
      \lambda_{CE}^{p_\ell/4} \, .
    \end{cases}
  \end{equation}
  To establish \eqref{eq:CE} (the bound below will also be used
  for \eqref{choicer0}), one takes $r_0$ such that
  \[
    r_0^2 C_0^2\log \lambda_{CE}   > |\log C_0|\, .
  \]
  The key distortion bound \cite[Prop.~V.6.3]{DMS} gives $C$ such
  that
  \begin{equation} \label{classic}
    \biggl| \frac{x_j'(a_1)}{x_j'(a_2)}
    \biggr| \leq C \, , \qquad \forall  N_0\le j \le n \,
    , \quad
    \forall a_1, a_2\in\omega \, ,
  \end{equation}
  whenever $n+1$ is a free return time of $\omega\in\PP_n$ with
  $x_{n+1}(\omega)\subset U_{r_0/2}$. The bound
  \eqref{l.transversality} follows from \cite[Prop.~V.6.2 and
  Theorem~V.6.2]{DMS}.

Let $\Omega'_j:=\bigcup_{ \om \in \P'_j} \om$, recall
  $\Omega_j$, and define $\Omega_j^{BC}$ and $\Omega_j^{',BC}$
  accordingly, setting 
  \begin{equation}\label{defBC}
    \Omega_{BC}=\Omega_{BC}(a_*,\alpha_{BC})=\cap_j \Omega^{BC}_j
    \,  , \quad \mbox{so that} \quad \Omega_*(a_*)\subset\Omega_{BC}(a_*)\, .
  \end{equation}

  It is easy to check that \eqref{polpol} implies
  \eqref{eq:polapproach} (for returns during a bound period, use
  that $\ell^{-\kappa_0}-e^{-\ell\beta_{BC }}\ge j^{-\kappa_0}$
  for all $N_0\le \ell\le j-1$, up to increasing $N_0$ again).
  Our choice of $N_0$ implies $\Omega_j\subset
  \Omega_j^{BC}$. Also, \eqref{l.transversality} with
  \eqref{monot} imply that all points in $\Omega_*$
are transversal. Since
  \eqref{small} is an immediate consequence of
  \eqref{eq:CE}--\eqref{l.transversality}, it only remains to
  establish that $a_*$ is a Lebesgue density point in $\Omega_*$
  (clearly, $a_*\in \Omega_*$) and that
  \eqref{lem:measureinomega} and \eqref{lem:measureinomega1}
  hold.

  \smallskip

  To show that $a_*$ is a Lebesgue density point of $\Omega_*$,
  we may follow\footnote{We mention a typo there: Although the
    constant $C=C(\epsilon)$ in the unnumbered equation on
    \cite[p.~433]{DMS} tends to zero as
    $\epsilon=|\omega_0|/2\to 0$, the constant $C_0$ is
    (fortunately) uniformly bounded away from zero. See the proof
    of \cite[Lemma~V.6.5]{DMS}.}  \cite[Step~7 of the proof of
  Theorem~V.6.1]{DMS}, replacing $C e^{-i C_0}$ there by
  $C' i^{-\kappa_0}$.

  \smallskip
  We next establish \eqref{lem:measureinomega} and \eqref{lem:measureinomega1}.
  For suitably small $\bar\eta>0$, and for $J_0\ge 1$ such
  that\footnote{Since $\bar\eta$ is independent of $\epsilon$,
    $r_0$, $N_0$, we may take $N_0\ge J_0$.}
  $\prod_{j=J_0}^\infty (1-e^{-\bar \eta j})>3/4$, the parameter
  exclusion rule \eqref{discardexpl} gives $d'_0>0$ (tending to
  zero with $\epsilon$) such that (\cite[Section V.6, Step
  7]{DMS}, \cite[\S 6]{Mo})
  \begin{align}\label{momo}
    \begin{cases}
      |\omega \cap \Omega^{',BC}_j|\ge (1- d'_0 e^{-j \bar
        \eta})|\omega|\,,
      &\forall \omega \in \PP^{BC}_{j-1}\,, \,\, \forall j\ge J_0\, ,\\
      |\Omega^{BC}_j|\ge |\Omega^{',BC}_j|-e^{-j\bar
        \eta}|\omega_0|\, , \,\,& \forall j\ge J_0\, .
    \end{cases}
  \end{align}
  The above implies 
  $|\Omega^{BC}_j|\ge (1- d'_0 e^{-\bar \eta
    j})|\Omega^{BC}_{j-1}|-e^{-\bar \eta j}|\omega_0|$ for
  $j\ge J_0$, and, exploiting that $|\omega_0|=|\Omega^{BC}_n|$
  for all $n\le N_0$ with $N_0\ge J_0$, and using the definition of
  $J_0$, also that
  \[
    |\Omega^{BC}_j| \geq \biggl( \prod_{n=J_0}^j (1- d'_0
    e^{-\bar \eta n}) - \sum_{n=J_0}^j e^{-\tilde \eta n} \biggr )
    |\omega_0| \geq \frac 1 2 |\omega_0| \, , \qquad \forall j
    \geq J_0\, .
  \]
  (By taking larger $J_0$, i.e.\ smaller $\epsilon$, we could
  replace $1/2$ by a number close to $1$.) Thus, applying
  inductively
  \[
    |\Omega^{BC}_j|\geq \bigl ( (1- d'_0 e^{-\bar \eta j})- 2
    e^{-\bar \eta j} \bigr ) |\Omega^{BC}_{j-1}|\, , \qquad
    \forall j\geq J_0 \, ,
  \]
  we find $\tilde \eta>0$ such that for any $j\ge J_0$
  \begin{equation}\label{mutatis}
    |\Omega_{BC}|\geq \prod_{n=j}^\infty (1 - ( d'_0+2)
    e^{-\bar \eta n}) |\Omega_{j-1}^{BC}|
    \geq (1 - (\tilde d_0 + 2) e^{-\tilde \eta j} )
    |\Omega_{j-1}^{BC}|\, .
  \end{equation}

  Recall that we fixed
  $d_1 \in(0, \frac{C_0}{4} \log \lambda_{CE})$ (independently of
  $\kappa_0$). Let $J_1$ be such that
  $\prod_{j=J_1}^\infty (1-e^{-\bar \eta j}-j^{-2})>3/4$ and
  return to the sets $\Omega_j$, $\Omega_j'$ constructed using
  the (polynomial) exclusion rule \eqref{polpol} for
  $\kappa_0 >1/d_1$.  We claim that for any $d_0>0$, if
  $\epsilon$ is small enough,
  \begin{align}\label{momo'}
    \begin{cases}
      |\omega \cap \Omega'_j|\ge (1-d_0 \cdot j^{-d_1 \kappa_0})
      |\omega|\,,
      &\forall \omega \in \PP_{j-1}\,,\,\,  \forall j\ge J_1\, ,\\
      | \Omega_j|\ge |\Omega'_j|- e^{-j\bar \eta}|\omega_0|\, ,
      &\forall j\ge J_1\, .
    \end{cases}
  \end{align} 
  Before establishing this claim, we note that, mutatis mutandis,
  \eqref{momo'} combined with the arguments leading to
  \eqref{mutatis} implies \eqref{lem:measureinomega}, while the
  more precise claim \eqref{lem:measureinomega1} follows from the
  refinement of \eqref{momo'} coming from the second statement of
  \cite[Lemma~V.6.9]{DMS} (see the use of \cite[Lemma~6.3]{Mo} in
  \cite[Lemma~6.4--Prop.~6.5]{Mo}).  \smallskip

  To show \eqref{momo'}, we proceed in three steps, performing
  the necessary changes in the proof in \cite[Section
  V.6]{DMS}. Recall \eqref{omegarl}.

  Firstly, up to taking larger $N_0$, the conclusion of
  \cite[Lemma~V.6.5]{DMS} (which deals with $(BA'_{j})$ for
  $\omega \in \PP_{j-1}$ satisfying $(BA'_{j-1})$ and
  $(EX_{j-1})$ and having a return at time $j$), if we replace
  the exponential rate $(BA'_{j-1})$ there by our polynomial rate
  \eqref{eq:polapproach}, becomes
  \begin{equation}
    \frac{|\omega \setminus \bigcup_{r \geq \kappa_0\log j}\,
      \omega_{r,\ell}|}{|\omega|}\ge 1- C j ^{-d_1 \kappa_0}\,
    ,\qquad \forall j \geq N_0 \, .
  \end{equation}
  To show this first claim, use that the constant $C_0 \in (0,1)$
  (introduced above) is independent of $\kappa_0$ (because
  $\lambda_{CE}$ does not depend on $\kappa_0$), and that
  \cite[Lemma~V.6.1]{DMS} gives that the bound period $p$ of a
  free return $\nu<j$ with
  \begin{equation}\label{forp}
    I_{r',\ell'}\subset x_\nu(\omega)\, , \quad \mbox{for} \quad
    r_0\leq r' \leq \kappa_0\log \nu\leq \kappa_0 \log j\, ,
  \end{equation}
  satisfies $p\ge C_0 r'$.  Then, up to taking larger $N_0$, we
  can replace \cite[V.(6.20)]{DMS} in the proof of
  \cite[Lemma~V.6.5]{DMS} by
  \begin{equation}\label{6.20}
    |x_j(\omega)|\geq \lambda_{CE}^{ p/4} \frac {e^{-r'}}{(r')^2}
    \geq  \frac {e^{(-1+ d_1)r'}}{(r')^2}
    \geq  \frac {1 }{j^{\kappa_0(1-d_1)}}
    \, , \qquad j \geq N_0\, ,
  \end{equation}
  where we used $d_1 \leq \frac{C_0}{4} \log \lambda_{CE}$ in the
  second inequality. We can thus replace the chain of
  inequalities after \cite[V.(6.20)]{DMS} (using the distortion
  bound \eqref{classic} for $\tilde \omega\subset \omega$ the
  largest interval with $x_n(\tilde \omega)\subset U_{r_0/2}$,
  taking $\epsilon$ small enough and $N_0$ large enough such that
  \eqref{6.20} also holds for $\tilde \omega$) by
  \[
    \frac{|\bigcup_{r \ge \kappa_0\log j}\,
      \omega_{r,\ell}|}{|\omega|} \le \frac{|\bigcup_{r \ge
        \kappa_0\log j}\, \omega_{r,\ell}|}{|\tilde \omega|} \le
    C\frac 1{j^{\kappa_0}}\frac{1}{|x_j(\tilde \omega)|} \le C
    j^{-d_1 \cdot \kappa_0}\, .
  \]

  Secondly,\footnote{We mention here a typo: \cite[V.(6.24)]{DMS}
  follows from \cite[V.(6.22)]{DMS} (and not \cite[V.(6.20)]{DMS}
  as stated there).}  \cite[Lemma~V.6.6]{DMS} (which deals with
  $(FA_j)$) uses \eqref{eq:expapproach} only via
  \cite[Lemma~V.6.3]{DMS}, while \cite[Lemma~V.6.3]{DMS} still
  holds (with the same proof) if we replace
  \eqref{eq:expapproach} by our stronger assumption
  \eqref{eq:polapproach}.

  Thirdly, \cite[Lemmas~V.6.7--6.9]{DMS} are unchanged, 
  establishing \eqref{momo'}.
\end{proof}

Lemma~\ref{l.goodpartition} below is the analogue of \cite[(III)']{DS}): 
\begin{lemma}[No Exceptionally Small Sets]
  \label{l.goodpartition}
  For any $\kappa_1 >\kappa_0$ there exists $N_1\ge N_0$ such that
  $|x_j(\om)|> j^{-\kappa_1}$ for all $j\ge N_1$ and
  $\om\in\PP_j=\PP_j(a_*,\kappa_0)$.
\end{lemma}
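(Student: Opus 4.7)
For $\omega\in\PP_j$ with $j$ large, the first step is to identify the \emph{creation time} $m=m(\omega)\le j$ of $\omega$, i.e., the largest $m\le j$ at which $\omega$ appears as a newly partitioned subinterval in the construction of $\PP_m$ (set $m=N_0$ if $\omega=\omega_0$). Then $\omega\in\PP_n$ for every $m\le n\le j$, and by construction $x_{m-1}(\omega)=\hat\omega_{r,\ell}$ for some host interval at some level $r$. The polynomial discarding rule \eqref{polpol} gives $e^{r}<(m-1)^{\kappa_0}$, and since $|\hat\omega_{r,\ell}|\ge C e^{-r}/r^{2}$ (from the sub-intervals $I^\pm_{r,\ell}$ used in Section~\ref{ss.partitions}), this yields the baseline estimate
\[
|x_{m-1}(\omega)|\;\ge\;\frac{C}{m^{\kappa_{0}}(\log m)^{2}}.
\]

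The plan is then to propagate this lower bound forward to time $j$ by tracking $|x_{n}(\omega)|$ through the successive bound and free periods of $\omega$ between $m$ and $j$. Writing $x_n(a)=T_a(x_{n-1}(a))$ and using \eqref{l.transversality}, one sees that at a return of level $r$ the image is contracted by $|x_n(\omega)|/|x_{n-1}(\omega)|\sim e^{-r}$, because the term $T_a'(x_{n-1}(a))x_{n-1}'(a)$ dominates over $\partial_a T_a(x_{n-1}(a))\sim 1/4$ once $\lambda_{CE}^{n-1}\gg e^{r}$. During the ensuing bound period of length $p$, the second inequality of \eqref{fromProp2.2}, combined with the standard Benedicks--Carleson relation $p\gtrsim r/\log\lambda_{CE}$, will show that $|x_{m+p}(\omega)|$ recovers to the scale of $|x_{m-1}(\omega)|$ (the classical ``return of scale''). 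During the subsequent free period the first inequality of \eqref{fromProp2.2} supplies exponential expansion at rate $e^{\gamma_{0}}$. At every further (necessarily inessential) free return in $(m,j]$, \eqref{eq:polapproach} forces the local level $r'<\kappa_{0}\log n$, so the same contraction/recovery cycle may be iterated with uniform constants.

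Assembling these estimates, the total loss between times $m-1$ and $j$ reduces to a factor polynomial in $\log j$, multiplied in the worst case by the single-iterate contraction $\sim e^{-r'}$ arising when $j$ itself lies just past a return of level $r'<\kappa_{0}\log j$. Choosing $N_{1}$ large enough absorbs the logarithmic losses and the bounded number of such single-iterate contractions into the slack $\kappa_{1}-\kappa_{0}>0$, yielding $|x_{j}(\omega)|>j^{-\kappa_{1}}$. The hard part will be the uniform-in-$m$, $r$ \emph{return of scale} bookkeeping inside each bound period, i.e.\ checking that the initial contraction $\sim e^{-r'}$ is indeed balanced by the derivative growth $\lambda_{CE}^{p/4}$ accumulated over the whole bound period. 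This is the polynomial-recurrence analogue of the classical Benedicks--Carleson estimates (compare the proofs in \cite[\S V.6]{DMS}), and it is where the polynomial bound \eqref{eq:polapproach}, rather than its exponential counterpart \eqref{eq:expapproach}, enters decisively.
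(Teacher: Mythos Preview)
Your plan mirrors the paper's: locate the last essential free return (your creation time $m$ plays the role of the paper's $j'+1$), read off the baseline $|x_{m-1}(\omega)|\gtrsim m^{-\kappa_0}(\log m)^{-2}$ from the host $I_{r,\ell}$ and the discarding rule \eqref{polpol}, then control the evolution up to time $j$. The paper's execution is more economical: instead of tracking individual bound/free cycles it makes a four-way case split on the nature of $j+1$ (essential free return, inessential free return, bound-period return, or $x_j(\omega)\cap U_{r_0}=\emptyset$) and for each case directly invokes an estimate from \cite[Lemma~V.6.3, Props~V.6.1--6.2]{DMS} comparing $|x_j(\omega)|$ to the image at the relevant earlier return. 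This yields \eqref{worse} without re-deriving the return-of-scale machinery.

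There is a real gap in your last paragraph. You correctly isolate a contraction $\sim e^{-r'}$ when $j$ sits just past a return of level $r'$ with $r'<\kappa_0\log j$, but then $e^{-r'}$ can be as small as $j^{-\kappa_0}$: a \emph{polynomial} loss, not one that a large $N_1$ can absorb into the slack $\kappa_1-\kappa_0$. Combined with your baseline (and $m$ can be of the same order as $j$), your chain of inequalities gives only $|x_j(\omega)|\gtrsim j^{-2\kappa_0}(\log j)^{-O(1)}$, proving the lemma only for $\kappa_1>2\kappa_0$, not for every $\kappa_1>\kappa_0$. The sentence ``the bounded number of such single-iterate contractions [is absorbed] into the slack $\kappa_1-\kappa_0$'' is precisely the step that fails: one such contraction already costs a full factor $j^{-\kappa_0}$. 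To close the gap you must either quantify the free-period growth between returns sharply enough to show it offsets the depth of the next return down to a $j$-independent constant (which is what the DMS lemmas the paper cites are meant to package), or settle for the weaker exponent---which, incidentally, still suffices for every later use of the lemma in the paper.
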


\begin{proof}
  We first show the lemma assuming that there exists
  $d_2\in (0,1)$ such that for any $j\ge N_0$, and any
  $ \omega\in \PP_{j}$, we have  \begin{equation}
    \label{worse}
    |x_j(\omega)| \geq \frac{d_2 e^{-r_0}(1-1/e)}{( \kappa_0\log
      j)^2 j^{\kappa_0}}
    \, ,
  \end{equation}
  with $r_0$ as in the proof of
  Proposition~\ref{keyprop}. Indeed \eqref{worse} implies that 
      \[
      |x_{j}(\omega)|\geq \frac{d_2 e^{-r_0}(1 -
        e^{-1})}{\kappa_0^2} \frac{1}{j^{\kappa_0} (\log j)^2} \,
      ,\qquad \forall \omega\in\PP_j\, , \forall j\ge N_0\, .
    \]
    Clearly, there exists $N_1(\kappa_1)\ge N_0$ such that
    the right-hand side is larger than $j^{-\kappa_1}$ for all
    $j\ge N_1$.

  To establish \eqref{worse}, we shall use
  \eqref{eq:polapproach}.  If $j+1$ is an essential free return
  time of $\omega$, then taking $r$ minimal such that
  $x_{j}(\omega)$ contains an interval $I_{r,\ell}^\pm$,
  \begin{equation}\label{casezero}
    |x_{j}(\omega)|\ge |I_{r,\ell}^\pm|=e^{-r}\frac {1-1/e}{r^2}> 
    \frac{j^{-\kappa_0}(1-1/e)}{( \kappa_0\log j)^2} \, . 
  \end{equation}

  Otherwise, letting $j'+1=\nu_{i'}(\omega)\ge \nu_1(\omega)$ be the
  largest essential free return time of $\omega$ such that
  $ j'+1<j+1$, we have $\omega \in \P_{j'}$ (since if
    $\tilde{\omega} \supset \omega$,
    $\tilde{\omega} \in\mathcal{P}_{j'}$, then $\tilde{\omega}$ is
    never cut between time $j'$ and $j$), so
  that \eqref{casezero} implies
  \[
    |x_{j'}( \omega)|> \frac{1-1/e}{( \kappa_0\log
      j')^2(j')^{\kappa_0}}>\frac{1-1/e}{( \kappa_0\log
      j)^2j^{\kappa_0}}\, .
  \]
  We shall combine the above bound with \cite[Lemma~V.6.3,
    Props~V.6.1--6.2]{DMS} to handle the three cases left,
  namely: the time $j+1$ is an inessential free return of $\omega$,
  the time $j+1$ is a return within a bound period of $\omega$, and
  the intersection of $x_j(\omega)$ and $U_{r_0}$ is empty.

  If $j+1=\nu_i( \omega)$ is an inessential free return then
  \cite[V.(6.15) in Lemma~V.6.3]{DMS} gives, for $i'\le i$ as defined above,
  \begin{equation}\label{caseone}
    |x_{j}(\omega)|\ge 2^{i-i'} |x_{j'}(\omega)|>
    2^{i-i'} \frac{1-1/e}{( \kappa_0\log j)^2j^{\kappa_0}}
    \, .
  \end{equation}

  If $j+1$ is a return within the bound period of a previous free
  return $j''+1$ of $\omega$, then using \eqref{casezero} for the
  bound period of an essential return, respectively
  \eqref{caseone} for the bound period of a nonessential return,
  and applying the first claim of \cite[Lemma~V.6.3]{DMS}, we
  find $d_2\in (0,1)$ such that
  \begin{equation}\label{casetwo}
    |x_{j}(\omega)|\ge d_2 \lambda_{CE}^{j-j''} |x_{j''}(\omega)|>
    \frac{d_2(1-1/e)}{( \kappa_0\log j)^2j^{\kappa_0}}
    \, .
  \end{equation}
  If  $x_j(\omega)\cap U_{r_0}=\emptyset$ 
  then \cite[V.(6.2) in Prop.~V.6.1 and
  Prop.~V.6.2]{DMS}
and \eqref{casezero} give
  \begin{equation}\label{casethree}
    |x_{j}( \omega)|\ge d_2 e^{-r_0}  |x_{j'}( \omega)|>
    \frac{d_2 e^{-r_0}(1-1/e)}{( \kappa_0\log j)^2j^{\kappa_0}}
    \, .
  \end{equation}
  We have shown \eqref{worse} and thus
  Lemma~\ref{l.goodpartition}.
\end{proof}

\subsection{A H\"older Local Distortion Estimate}
\label{sec.distortion}

From now on, let $a_*\in (a_\mathrm{mix}, 4)$ be a 
Misiurewicz parameter, fix
$\kappa_0\ge 11 / 3 d_1$, and let
$\Omega_*=\Omega_{*}(a_*,\kappa_0)\subset \Omega_{BC}=
\Omega_{BC}(a_*)$ be the positive measure Cantor set constructed
in Section~\ref{ss.partitions} via families
$\PP_j=\PP_j(a_*,\kappa_0)$.  The following\footnote{For
  \cite[(30)]{DS}, see \eqref{eq.xaa}.  We do not need
  \cite[(32)]{DS}.} replaces \cite[(33), (31)]{DS}. The bound
\eqref{eq.distortion1} is new.

\begin{lemma}[H\"older Distortion Bounds]
  \label{l.distortion}
  There exists $C<\infty$ such that for all
  $n\ge N_0$ (with $N_0$ as in
  Proposition~\ref{keyprop}) and any
  $\om\in \PP_n=\PP_n(a_*,\kappa_0)$ 
   \begin{equation}
     \label{eq.distortion3}
     \frac 1 C \leq
     \biggl|\frac{x_n'(a)/x_j'(a)}{(T_a^{n-j})'(x_j(a))} \biggr| \leq C
     \, , \qquad \forall 1 \leq j \leq n \, ,\quad \forall a \in
     \omega \,.
  \end{equation}
  In addition, there exist $C<\infty$ and $M_0>\kappa_0$
such that, for all
  $n\ge N_0$, each
  $\tilde \omega \in\PP_n=\PP_n(a_*,\kappa_0)$, and every
  $\om\subset \tilde \omega$ and
    $\alpha \in [0,1)$ satisfying 
  \begin{equation}
    \label{need}
    |x_{n}(\omega)| \leq n^{-M_0/(1-\alpha)} \, ,
  \end{equation}
  we have
  \begin{equation}
    \label{eq.distortion1}
    \biggl| \frac{x_n'(a_1)}{x_n'(a_2)}
    \biggr| \leq 1 +
    C|x_{n}([a_1,a_2])|^\alpha \, , \qquad
    \forall a_1, a_2\in\omega \,.
  \end{equation} 
\end{lemma}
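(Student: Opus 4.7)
Our plan is to establish the two bounds separately. For \eqref{eq.distortion3}, we iterate the transversality bound \eqref{l.transversality}. The chain rule gives
\[
(T_a^n)'(T_a(c)) = (T_a^{n-j})'(x_j(a)) \cdot (T_a^j)'(T_a(c)),
\]
and dividing \eqref{l.transversality} at time $n$ by the same at time $j$ (both $\ge N_0$) yields
\[
\biggl|\frac{x_n'(a)/x_j'(a)}{(T_a^{n-j})'(x_j(a))}\biggr| \in [C^{-2},C^2].
\]
The finitely many initial cases $1 \le j < N_0$ are absorbed into the constant by a compactness argument: the Misiurewicz property of $a_*$ ensures that the critical orbit of $T_{a_*}$ stays bounded away from $c$ on any fixed initial segment, so $|x_j'(a)|$ and $(T_a^{n-j})'(x_j(a))$ are uniformly bounded above and below by continuity for $a \in \omega \subset \omega_0$, provided $\omega_0$ is small enough.

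For \eqref{eq.distortion1}, the strategy is to refine the classical Benedicks--Carleson bound \eqref{classic} so as to extract a H\"older modulus of exponent $\alpha<1$. Since \eqref{classic} already ensures $|\log(x_n'(a_1)/x_n'(a_2))| \le \log C$, the elementary estimate $|e^y - 1| \le C|y|$ for bounded $y$ reduces the task to bounding $|\log(x_n'(a_1)/x_n'(a_2))|$ by $C|x_n([a_1,a_2])|^\alpha$. Using \eqref{eq.distortion3} to switch from the parameter derivative to the phase derivative $(T_a^n)'(T_a(c))$---the resulting bounded ``Tsujii--Dobbs'' factor $x_n'(a)/(T_a^n)'(T_a(c))$ is H\"older in $a$ via its explicit telescoping expression and is treated analogously---and expanding
\[
\log\frac{(T_{a_1}^n)'(T_{a_1}(c))}{(T_{a_2}^n)'(T_{a_2}(c))} = \sum_{k=0}^{n-1}\log\frac{T_{a_1}'(x_k(a_1))}{T_{a_2}'(x_k(a_2))}
\]
with $T_a'(x)=a(1-2x)$, the task reduces (up to a harmless contribution of order $n|a_1-a_2|\le Cn\lambda_{CE}^{-n}|x_n(\omega)|$ coming from $\log(a_1/a_2)$ via \eqref{small}) to bounding
\[
S := \sum_{k=0}^{n-1}\frac{|x_k(a_1)-x_k(a_2)|}{|x_k(a)-c|}
\]
by $C|x_n([a_1,a_2])|^\alpha$.

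To estimate $S$, I use the distortion identity $|x_k(\omega)| \asymp |x_n(\omega)|/(T_a^{n-k})'(x_k(a))$ (a direct consequence of \eqref{eq.distortion3}) together with the polynomial recurrence $|x_k(a) - c| \ge k^{-\kappa_0}$ from \eqref{eq:polapproach} for $k\ge N_0$. Interpolating between $|x_k(a_1)-x_k(a_2)| \le C|x_n([a_1,a_2])|/(T_a^{n-k})'(x_k)$ and the trivial bound $|x_k(a_1)-x_k(a_2)| \le 1$ at the auxiliary exponent $\alpha'' = (1+\alpha)/2 \in (\alpha,1)$, and summing using the CE expansion \eqref{eq:CE} during free periods together with the binding estimates \eqref{fromProp2.2} during bound periods, yields
\[
S \le C|x_n([a_1,a_2])|^{\alpha''} n^{M_0''}
\]
for an exponent $M_0''$ depending only on $\kappa_0$. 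Choosing $M_0 \ge 2M_0''$ in the hypothesis \eqref{need} (a condition independent of $\alpha$) absorbs the polynomial $n^{M_0''}$ into the excess factor $|x_n([a_1,a_2])|^{\alpha''-\alpha} \le n^{-M_0/2}$ guaranteed by \eqref{need}, yielding $S \le C|x_n([a_1,a_2])|^\alpha$.

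The main obstacle is the careful tracking of $(T_a^{n-k})'(x_k)$ across bound periods, where the initial factor $T_a'(x_\nu) \approx \nu^{-\kappa_0}$ at a free return $\nu$ is small. The saving is that this small factor is immediately compensated by near-exponential growth: $T_a(x_\nu)$ lies close to $T_a(c)=x_0(a)$, so the remaining bound-period derivative shadows $(T_a^{p-1})'(x_0)$ up to a bounded error, as captured by \eqref{fromProp2.2}. Handling this recursively---stripping off bound periods one at a time and reducing to sums over free returns, whose depths and spacings are controlled by \eqref{eq:CE} and \eqref{eq:polapproach}---yields the polynomial exponent $M_0''$, and hence the admissible universal $M_0$.
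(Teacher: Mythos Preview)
Your approach is essentially the same as the paper's: both deduce \eqref{eq.distortion3} from \eqref{l.transversality} via the chain rule, and both reduce \eqref{eq.distortion1} to the polynomial lower bound $|(T_a^{n-k})'(x_k(a))| \gtrsim n^{-\kappa_0}k^{-\kappa_2}$ obtained by the free/bound-period case analysis you sketch (the paper packages this as the sum bound \eqref{need2}), combined with the telescoping expression for the Tsujii factor $x_n'(a)/(T_a^n)'(x_0(a))$. Your interpolation at the auxiliary exponent $\alpha''=(1+\alpha)/2$ is a harmless but unnecessary detour---the paper bounds the relevant sum \emph{linearly} in $|x_n(\omega)|$ and then simply writes $|x_n(\omega)|=|x_n(\omega)|^\alpha\cdot|x_n(\omega)|^{1-\alpha}$, using \eqref{need} to absorb the polynomial-in-$n$ prefactor into the second factor.
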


If $\alpha=0$, and $n+1$ is a free return of $\omega \in \PP_n$,
the bound \eqref{eq.distortion1} is just \eqref{classic}. We
shall require \eqref{eq.distortion1} for some $\alpha >0$ in Corollary~\ref{c.switch}.

\begin{proof}
  The bound \eqref{eq.distortion3} is an immediate consequence of
  \eqref{l.transversality}.
  
We first claim that\footnote{\label{crude}Our  proof is inspired from that of \cite[Theorem~V.6.2]{DMS}. This is suboptimal but  enough for our purposes. Adapting instead \cite[Lemma~V.6.4]{DMS} could enhance \eqref{eq.distortion1}.} there exist $C'$ and $\kappa_2>0$
such that for any $n$ 
\begin{equation}\label{need2}
\sum_{i=0}^{j-1} |x_i(\om)|\le  C'  j^{\kappa_0+1+\kappa_2 } |x_{j}(\om)| \, , \quad \forall 1\le j\le n \, ,\,\forall \om\subset \tilde \omega \in \P_n\, .
\end{equation}
To start, there is $C$ such that for any $0\le i\le j\le n$, using  \eqref{Markov}  and \eqref{eq.distortion3} there exists $a=a(i,j,\omega)\in \omega$ such that,
setting   $X_{i,j}= x_j \circ x_i^{-1}$,
  \begin{align} \label{addquote}
    \frac{|x_i(\omega)|}{ |x_{j}(\omega)| }
    &= \frac{|x_i(\omega)|}{   |X_{i,j} (x_{i}(\omega))|}= 
    \frac{|x'_i(a)|}{   |x'_{j}(a)|}
   \le    \frac{C}{ |(T^{j-i}_a)'(T^{i+1}_a(c))| }
    \,   .
  \end{align}
(We used  $X'_{i,j}=x'_j/x'_i$ and the mean value theorem in the second equality.)
Next, let  $s_j(a)$  be the largest $\ell$ with $\nu_{\ell}(a)\le j$,
and put\footnote{The condition $(FA)_n$ implicitly used  in Proposition~\ref{keyprop}   says that,
for some fixed arbitrarily small $\tau>0$,
$
F_\ell(a)\ge \ell (1-\tau)$ for $N_0\le \ell \le n$. We shall not need this here.}
\begin{align*}
&q_\ell(a)=\nu_{\ell+1}(a)-(\nu_\ell(a)+p_\ell(a)+1)\,, \qquad \ell=0, \ldots, s_j(a)-1\, , \\
&q_{s_j(a)}(a)=\max\{0,
j-(\nu_{s_j(a)}(a)+p_{s_j(a)}(a)+1)\} \, , \quad F_j(a)= \sum_{\ell=0}^{s_j(a)} q_\ell(a)  .
\end{align*}
Set $p_\ell=p_\ell(a)$, $\nu_\ell=\nu_\ell(a)$, $q_\ell=q_\ell(a)$,
and $s_j=s_j(a)$.  Assume first that $i=0$. Then, we have   (see e.g.\ \cite[V.(6.11)]{DMS})
\begin{align}
\nonumber|(T^{j-i}_a)'(T^{i+1}_a(c))|&=|(T^{j}_a)'(T_a(c))|\\
\nonumber&=|(T^{\nu_{1}-1}_a)'(T_a(c))|
 \cdot
|(T^{j+1-\nu_{s_j}}_a)'(T^{\nu_s}_a(c))|\\
\nonumber&\qquad \cdot \biggl (\prod_{\ell=1}^{s_j-1}|(T^{p_{\ell}+1}_a)'(T^{\nu_\ell}_a(c))|
|(T^{q_\ell}_a)'(T^{\nu_\ell+p_\ell+1}_a(c))| \biggr )\, .
\end{align}
Since $a$ satisfies $(BA)_m$ and $(FA)_m$
for all $m\le n$, the bounds \eqref{fromProp2.2} give $\lambda_{CE}$, $\gamma_0>0$, and $C_0>0$ such that
\[
\prod_{\ell=1}^{s_j-1}|(T^{p_{\ell}+1}_a)'(T^{\nu_\ell}_a(c))|
|(T^{q_\ell}_a)'(T^{\nu_\ell+p_\ell+1}_a(c))| 
\ge \prod_{\ell=1}^{s_j-1} C_0 e^{\gamma_0 q_\ell} \lambda_{CE}^{p_\ell/4} \, .
\]
Similarly, $|(T^{\nu_{1}-1}_a)'(T_a(c))|>C_0 e^{\gamma_0 \nu_1}$. Next,  
if $j \le  \nu_{s_j}+p_{s_j}+1$, we have
\[
|(T^{j+1-\nu_{s_j}}_a)'(T^{\nu_{s_j}}_a(c))|\ge C_0^2 \lambda_{BC}^{j-\nu_{s_j}} j^{-\kappa_0} e^{-r_0}\, , 
\]
where we
used \eqref{eq:polapproach} and \cite[Lemma~V.6.1.b, Prop.~V.6.1]{DMS}.
If $j >  \nu_{s_j}+p_{s_j}+1$, we have, using \cite[Lemma~V.6.1.c, Prop.~V.6.1]{DMS}
\[
|(T^{j+1-\nu_{s_j}}_a)'(T^{\nu_s}_a(c))|\ge C_0^2 \lambda_{BC}^{p_{s_j}/4} 
e^{\gamma_0(j-(\nu_{s_j}-p_{s_j}-1))} e^{-r_0} \, , 
\]
Summarising, 
\[
|(T^{j}_a)'(T_a(c))|\ge  \frac{C_0^{s_j+4}}{C}
\lambda_{CE}^{(j-F_j(a))/4} e^{\gamma_0 F_j(a)}j^{-\kappa_0}\, .
\]
Since $p_\ell\ge C_0 r_0$  (see after \eqref{forp}),
we have $j-F_j\ge j C_0 r_0$ while $s_j\le j/(C_0r_0)$.
We  took $r_0$ large enough (see after \eqref{fromProp2.2}) such that 
\begin{equation}\label{choicer0}
C_0^{s_j+4}
\lambda_{CE}^{(j-F_j(a))/4}\ge 1\, . 
\end{equation}
Finally, using the trivial bound $e^{\gamma_0 F_{j}(a)}\ge 1$, we find
\[
|(T^{j}_a)'(T_a(c))|\ge  \frac{ j^{-\kappa_0}}{C}\, .
\]

If $i\ge 1$ and $\nu_{\ell_i}(a)+p_{\ell_i}(a)<i<\nu_{\ell_i+1}(a)$
for some $\ell_i\ge 1$,
then we  proceed as for $i=0$, replacing $|(T^{\nu_{1}-1}_a)'(T_a(c))|$
by $|(T^{\nu_{\ell_i+1}-i}_a)'(T^{i+1}_a(c))|$, and setting
$F_{i,j}(a)=\nu_{\ell_i+1}(a)-i+\sum_{\ell\ge \ell_i+1}^{s_j(a)} q_\ell(a)$.
Then
\[
|(T^{j-i}_a)'(T^{i+1}_a(c))|\ge \frac{ C_0^{s_j-s_i +4}}{C}
\lambda_{CE}^{((j-i)-F_{i,j}(a))/4} e^{\gamma_0 F_{i,j}(a)}j^{-\kappa_0}\, .
\]
We have $j-i-F_{i,j}\ge (j-i) C_0 r_0$ while $s_j-s_i\le (j-i)/(C_0r_0)$,
and  we find,
using  $e^{\gamma_0 F_{i,j}(a)}\ge 1$ (we do not
know or need  $F_{i,j}(a)\ge (1-\tau)(j-i)$),
\[
|(T^{j-i}_a)'(T^{i+1}_a(c))|\ge\frac{  j^{-\kappa_0}}{C}\, .
\]

Otherwise, $\nu_{\ell_i}(a)\le i-1\le \nu_{\ell_i}(a)+p_{\ell_i}(a)$
for some $\ell_i\ge 1$. There may be (nonfree) returns
during the $\ell_i$th bound period. To bypass this difficulty, we exploit
that
 the length of the $\ell$th bound period
is of the order $r$ if $x_{\nu_{\ell}}(a)\in I_{r}$ (\cite[Lemma~V.6.1a]{DMS}).
By \eqref{polpol}, we have $r_{\ell_i}=O(\log(\nu_{\ell_i}))\le  C \kappa_0 \log i$. Thus, the missing
factor
in the $\ell$th bound period is $\le \Lambda^{ C\kappa_0\log i}\le i^{\kappa_2}$,
and 
\[
|(T^{j-i}_a)'(T^{i+1}_a(c))|\ge  \frac{ j^{-\kappa_0}}{ C i ^{\kappa_2 }}\, .
\]
Summing over $i$,  and recalling \eqref{addquote}, this establishes \eqref{need2}.

\medskip

Next, taking $a_1,a_2 \in \omega$, note that \eqref{small}
  (using the first bound of \eqref{monot} if $i<N_0$) implies that
  for all $N_0\le i\le n$, recalling $T'_a(x)=a(1-2x)$,
\begin{align}
    \nonumber
    |T_{a_1}'(x_i(a_1))&-T_{a_2}'(x_i(a_2))|\\
    \nonumber
                       & \leq |T_{a_1}'(x_i(a_1)) - T_{a_2}' (x_i
                         (a_1))| + |T_{a_2}' (x_i (a_1)) -
                         T_{a_2}' (x_i (a_2))| \\
    \label{eq.36}
                       &\leq |a_1-a_2| + 2 a_2 |x_i(a_1) -
                         x_i(a_2)| \leq (C + 2 a_2) |x_i(\omega)|
                         \,.
\end{align}
  (Note that \eqref{eq.36} replaces \cite[(36)]{DS}.) We claim
  that there exists $C''$ with
  \begin{equation}
    \label{eq.37}
    \biggl|\frac {(T_{a_1}^j)'(x_0(a_1))}{(T_{a_2}^j)'(x_0(a_2))}\biggr|
    \leq 
    1 + C'' e^{C''}  j^{2\kappa_0+1+\kappa_2} |x_{j}(\omega)| \, , \,
    \forall 1\leq j\leq n \, .
  \end{equation}
  (The above replaces \cite[(37)]{DS}.)  Indeed, using the
  classical bound
  \[
    \prod_{i=0}^{j-1} (1+ \upsilon_i) \leq \exp \biggl(
    \sum_{i=0}^{j-1}\upsilon_i \biggr) \leq 1+
    e^{\sum_{i=0}^{j-1}\upsilon_i}\sum_{i=0}^{j-1}\upsilon_i\,,\,\,
    \mbox{ if all }\upsilon_i\geq 0\, ,
  \]
  we have, setting $C''=2C'C(C+2a_2 )$,
  \begin{align}
    \nonumber \biggl| \frac{(T_{a_1}^j)' (x_0
    (a_1))}{(T_{a_2}^j)' (x_0 (a_2))}
    & \biggr| = \biggl| \prod_{i=0}^{j-1} \frac{T_{a_1}' (x_i
      (a_1))}{T_{a_2}' (x_i (a_2))} \biggr| \\
    \nonumber
    &\leq 1 + e^{\sum_i \Bigl|-1 + \frac{T_{a_1}' (x_i
      (a_1))}{T_{a_2}' (x_i (a_2))} \Bigr|} 
      \cdot
      \sum_{i=0}^{j-1} \biggl|\frac{T_{a_1}' (x_i (a_1))}{T_{a_2}' (x_i
      (a_2))} - 1 \biggr| \\
    \nonumber
    &\leq 1 + e^{(C + 2 a_2) \sum_{i=0}^{j-1} C |x_i(\omega)|
      i^{\kappa_0}} \cdot (C + 2 a_2) \sum_{i=0}^{j-1} C
      |x_i(\omega)| i^{\kappa_0} \\
    \label{eq.standarddist}
    &\leq 1 + e^{C'' j^{2\kappa_0+1+\kappa_2} |x_{j}(\omega)|} \cdot
      C'' j^{2\kappa_0+1+\kappa_2} |x_{j}(\omega)|  
      \,,\,\,\forall j\leq n\, , 
  \end{align}
  where we used \eqref{eq.36} and \eqref{eq:polapproach} (the
  first bound of \eqref{monot} if $i<N_0$) in the second
  inequality, and \eqref{need2} in the last inequality.  
  Setting $M_0:=4\kappa_0+3+2\kappa_2$, if  \eqref{need} holds
  for $\omega$, then \eqref{need2} gives for all $N_0\le j \leq n$
  \begin{align*}
  C'' j^{2\kappa_0 +1+\kappa_2} |x_{j}(\omega)|  
    & \leq  C'' j^{2\kappa_0+1+\kappa_2} C' j^{\kappa_0+1+\kappa_2}|x_{n}(\omega)|\\
      &\leq C' \frac{j^{3\kappa_0+3+2\kappa_2}}{n^{M_0/(1-\alpha)}} 
\le C'''
       \, .
  \end{align*}
  This proves \eqref{eq.37}. Similarly,
  $ \Bigl|\frac{
    (T_{a_2}^j)'(x_0(a_2))}{(T_{a_1}^j)'(x_0(a_1))}\Bigr| \leq
  1+C'' e^{C''} j^{2\kappa_0+1+\kappa_2} |x_{j}(\om)|$. Therefore,
  \[
   \biggl  |\frac1{(T_{a_1}^j)'(x_0(a_1))}-\frac1{(T_{a_2}^j)'(x_0(a_2))}\biggr |
    \leq C'''\frac{j^{2\kappa_0+1+\kappa_2}
      |x_{j}(\omega)|}{|(T_{a_1}^j)'(x_0(a_1))|}\,.
  \]
  We can then adapt the end of the proof of \cite[(31)]{DS}:
  Comparing each term on the right-hand side of
  \begin{equation*}
    \frac{x_n'(a)}{(T_a^n)'(x_0(a))}
    =x_0'(a)+\sum_{j=1}^n\frac{(\partial_aT_a)(x_{j-1}(a))}{(T_a^j)'(x_0(a))}\, ,\,
    \forall a\in\tilde \omega\in \PP_n\,,
  \end{equation*}
  for $a=a_1$ and $a=a_2$, we find, since
  $x_0'(a)=\partial_ac_1(a)=1/4$, and
  \[
    |\partial_aT_a|_{a_1}(x_{j-1}(a_1))-\partial_aT_a|_{a_2}(x_{j-1}(a_2))|
    \leq |x_{j-1}(a_1)-x_{j-1}(a_2)|\leq |x_{j-1}(\om)|\, ,
  \]
  recalling \eqref{eq:CE}, and applying  \eqref{need2}
  and then \eqref{need} for $M_0=4\kappa_0+3+2\kappa_2$,
  \begin{align*}
    \biggl| \frac{x_n'(a_1)}{(T_{a_1}^n)'(x_0(a_1))} \biggr|
    &\leq \biggl|\frac{x_n'(a_2)}{(T_{a_2}^n)'(x_0(a_2))} \biggr|
      + \hat C |a_1-a_2| + \hat C \sum_{j=1}^n\frac{j^{2\kappa_0+1+\kappa_2}}{\lambda_{CE}^{j}} |x_{j}(\omega)|
    \\
    &\leq \biggl |\frac{x_n'(a_2)}{(T_{a_2}^n)'(x_0(a_2))} \biggr|
      +\bar C C' n^{4\kappa_0+3+2\kappa_2}|x_{n}(\omega)|\\
    &\leq \biggl|\frac{x_n'(a_2)}{(T_{a_2}^n)'(x_0(a_2))} \biggr| + \tilde
      C|x_{n}(\omega)|^\alpha \, .
  \end{align*}
  Finally, we have, using \eqref{l.transversality} (which plays
  the role of \cite[Lemma~2.4]{DS}),
  \begin{align*}
    \biggl| \frac{x_n'(a_1)}{x_n'(a_2)} \biggr|
    & \leq \biggl| \frac{(T_{a_1}^n)' (x_0(a_1))}{(T_{a_2}^n)' (x_0
      (a_2))} \biggr| \biggl( 1 + \tilde C |x_{n} (\omega)|^\alpha
      \frac{|(T_{a_2}^n)'(x_0(a_2))|}{|x_n'(a_2)|} \biggr) \\
    & \leq 1 + C \tilde C |x_{n} (\omega)|^\alpha\,. \qedhere
  \end{align*}
\end{proof}

\subsection{Uniform Decorrelation and H\"older Response}\label{s.pud}
The maps $x_j$ are not the iterates
of a fixed dynamical system admitting an invariant measure.  To
exploit statistical information on the iterates of the mixing CE map
$(T_{a_0},\mu_{a_0})$, we will ``switch locally'' from $x_j$ to
$T_{a_0}^j$ (see Lemma~\ref{l.switch}), using that any
$a\in \Omega_{*}$ satisfies\footnote{The factor
  $\|\varphi\|_{L_1(dm)}$ in the right-hand side of
  \cite[Prop.~4.3]{DS} is replaced in Proposition~\ref{p.udc} by
  $\|\varphi\|_{L^1(d\mu_a)}\le \|\varphi\|_{L^\infty(dm)}$. This does
  not impact \cite[p.~36, use of Prop.~4.3]{DS}.}  the following
uniform decorrelation result for H\"older continuous observables.  For
$q>1$ and $s\in [0, 1/q)$, we denote by $H^s_q(I)=F^s_{q,2}(I)$ the Sobolev space
of functions of differentiability $s$ and integrability $q$ supported
in $I$ (see \cite{RS}).

\begin{proposition}[Uniform Decay of Correlations]
  \label{p.udc}
  For any $s>0$ and $q>1$, there exist $C<\infty$ and
  $\rho^s_q<1$ such that, for all $\varphi\in H^s_q(I)$,
  $\psi \in L^\infty(dm)$, $a\in \Omega_*(a_*, \kappa_0)$
  \[
    \biggl| \int_0^1 \varphi (\psi\circ T_a^n) \, dm -
    \int_0^1\varphi \, dm \int_0^1\psi \, d\mu_a \biggr| \leq C
    \|\varphi\|_{H^s_q}\|\psi\|_{L^1(d\mu_a)}(\rho^s_q)^n\,
    ,\forall n\geq 1\, .
  \]
  For any $\varpi >0$, there exist $C<\infty$ and $\rho_\varpi<1$
  such that, for all $\varphi\in C^\varpi$,
  $\psi \in L^\infty(dm)$, $a\in \Omega_*(a_*, \kappa_0)$
  \[
    \biggl| \int_0^1 \varphi (\psi\circ T_a^n)\, d\mu_{a } -
    \int_0^1 \varphi \, d\mu_{a}\, \int_0^1 \psi\, d\mu_{a}
    \biggr| \le C \|\varphi\|_\varpi \|\psi\|_{L^1(d\mu_a)}
    (\rho_\varpi)^{n}\, ,\, \forall n\ge 1\, .
  \]
\end{proposition}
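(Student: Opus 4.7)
The plan is to reduce both bounds to the uniform spectral gap of the Perron--Frobenius transfer operator $L_a$ established in \cite{BBS} for Collet--Eckmann quadratic maps satisfying polynomial recurrence, and then verify that every parameter $a \in \Omega_*(a_*, \kappa_0)$ falls under the scope of that result. By Proposition~\ref{keyprop}, parameters in $\Omega_*$ satisfy the CE property \eqref{eq:CE} with the common rate $\lambda_{CE}$, the polynomial recurrence \eqref{eq:polapproach} with exponent $\kappa_0$ (which can be taken as large as needed), and the uniform distortion bounds \eqref{classic} and \eqref{eq.distortion1}; moreover Lemma~\ref{l.goodpartition} rules out exceptionally small images of the refining intervals. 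These are precisely the ingredients needed to extract from \cite{BBS} uniform constants $C < \infty$ and $\rho < 1$ together with a uniform $H^s_q$-bound on the acim density $h_a$, and a spectral-gap estimate of the form
\begin{equation*}
  \biggl\| L_a^n \varphi - h_a \int \varphi \, dm \biggr\|_{Y} \leq C \|\varphi\|_{H^s_q}\, \rho^n,
\end{equation*}
with $Y$ a suitable space in duality with $L^1(d\mu_a)$.

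For the first bound, I would use the defining duality of the transfer operator, $\int \varphi (\psi \circ T_a^n)\, dm = \int L_a^n \varphi \cdot \psi \, dm$, and subtract the invariant part $\int \varphi\, dm \cdot \int \psi\, d\mu_a = \int h_a \psi \, dm \cdot \int \varphi\, dm$. Applying the spectral gap above and pairing $L_a^n \varphi - h_a \int \varphi\, dm$ with $\psi$ via the $Y$--$L^1(d\mu_a)$ duality then yields the claim with $\rho^s_q := \rho$.

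For the second bound, I would reduce to the first through the rewriting
\begin{equation*}
  \int \varphi (\psi \circ T_a^n) \, d\mu_a = \int (\varphi h_a) (\psi \circ T_a^n) \, dm,
\end{equation*}
applying the first statement to $\tilde \varphi := \varphi h_a$. This step requires that $h_a$ lies uniformly in $H^s_q$ over $a \in \Omega_*$ and that multiplication by $\varphi \in C^\varpi$ maps $H^s_q$ into itself boundedly; both facts are provided by \cite{BBS}, for any $s$ small enough relative to $\varpi$. Hence $\|\tilde \varphi\|_{H^s_q} \leq C \|\varphi\|_\varpi$ uniformly, and the result follows with $\rho_\varpi := \rho^s_q$.

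The main obstacle is ensuring \emph{full} uniformity of the constants across $\Omega_*$: the spectral gap in \cite{BBS} is tied to quantitative recurrence and distortion data that must not depend on the individual parameter. This is precisely what the polynomial exclusion rule \eqref{polpol} of Proposition~\ref{keyprop} buys us, in contrast to the classical Benedicks--Carleson exponential scheme. Without the common recurrence exponent $\kappa_0$ and the absence of exceptionally small sets (Lemma~\ref{l.goodpartition}), one would at best obtain pointwise exponential mixing with $a$-dependent constants, which would be insufficient for the ASIP argument that follows.
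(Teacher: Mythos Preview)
Your proposal is essentially correct and follows the same route as the paper: invoke the uniform spectral gap from \cite{BBS} for parameters in $\Omega_*(a_*,\kappa_0)$ to get the first bound, then deduce the second by writing $\int \varphi(\psi\circ T_a^n)\,d\mu_a=\int (\varphi h_a)(\psi\circ T_a^n)\,dm$ and using that $h_a\in H^s_q$ uniformly (Sublemma~\ref{fact}) together with the multiplier property of H\"older functions on $H^s_q$.

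Two minor clarifications. First, the spectral gap in \cite{BBS} is not stated directly for $L_a$ acting on $H^s_q(I)$ but for the lifted operator $\hat\LL_a$ on the tower space $\BB_a^{W^1_1}$; the paper therefore first mollifies to reduce to $\varphi\in C^1$, then lifts such $\varphi$ to $\hat\varphi$ supported on level zero of the tower (so $\Pi_a\hat\varphi=\varphi$, $\|\hat\varphi\|'_a\le C\|\varphi\|_1$, and $\hat\nu(\hat\varphi)=\int\varphi\,dm$), and applies \eqref{tran}. Your ``space $Y$ in duality with $L^1(d\mu_a)$'' should be understood through this tower projection rather than as a direct gap on $H^s_q$. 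Second, Lemma~\ref{l.goodpartition} and the distortion bound \eqref{eq.distortion1} are not actually used here; the relevant uniform inputs are \eqref{eq:CE} and \eqref{eq:polapproach} (see the proof of Sublemma~\ref{fact}), which suffice to place $\Omega_*(a_*,\kappa_0)$ in the scope of \cite[Cor.~1.6]{BBS} with uniform constants.
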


We also use H\"older bounds on $a\mapsto \mu_a$ as a distribution
(in Lemma~\ref{l.varregularity}):

\begin{proposition}[Fractional Response]
  \label{p.udc'}
  For any $\Theta \in (0,1/2)$, there exists $C$ such that for
  all $\varphi \in C^{1/2}$
  \begin{equation}\label{eq.densitydiff}
    \biggl| \int \varphi \, d\mu_a - \int \varphi \, d\mu_{a'}
    \biggr| \leq C |a-a'|^\Theta \|\varphi\|_{1/2}\, , \quad
    \forall a, a'\in \Omega_* (a_*,\kappa_0)\, .
  \end{equation}
\end{proposition}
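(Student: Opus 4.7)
The plan is to deduce Proposition~\ref{p.udc'} directly from the fractional response results of \cite{BBS}, using the fact that our Cantor set $\Omega_*(a_*,\kappa_0)$ was constructed in Proposition~\ref{keyprop} precisely so as to satisfy uniformly, over all $a\in\Omega_*$, the kind of hypotheses required in that reference: the Collet--Eckmann estimate \eqref{eq:CE} with a common rate $\lambda_{CE}$; the polynomial lower recurrence \eqref{eq:polapproach} (which, with $\kappa_0\geq 3/d_1$, is stronger than the exponential bound $(BA_j)$ of the classical Benedicks--Carleson setting); the uniform distortion control from Lemma~\ref{l.distortion}; and the no-exceptionally-small-set property of Lemma~\ref{l.goodpartition}. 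Combined with the uniform exponential decay of correlations stated in Proposition~\ref{p.udc} (also from \cite{BBS}), these provide exactly the input needed for the response formula in \cite{BBS}.

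First I would recall the representation of $h_a$ as a sum of ``spikes'' with controlled square-root singularities at the images $c_k(a)$ of the critical point, together with a regular remainder, and the associated spectral decomposition of the transfer operator $\mathcal{L}_a$ on the relevant anisotropic Banach space, with a spectral gap uniform across $\Omega_*$. This yields a representation of the form
\[
h_a - h_{a'} = (\mathrm{id} - \mathcal{L}_a)^{-1}(\mathcal{L}_a - \mathcal{L}_{a'}) h_{a'}\,.
\]
Next, I would test this identity against $\varphi\in C^{1/2}$. The $1/2$-H\"older regularity of $\varphi$ is precisely enough to absorb, in the duality pairing, the square-root singularities carried by $h_{a'}$: at each step where $\mathcal{L}_a-\mathcal{L}_{a'}$ is expanded, the leading contribution is governed by differences $\varphi(c_k(a))-\varphi(c_k(a'))$, and $|c_k(a)-c_k(a')|\leq C\Lambda^k|a-a'|$ by \eqref{eq.xaa}, balanced against the uniform expansion \eqref{eq:CE} and the polynomial recurrence \eqref{eq:polapproach} so as to produce a convergent series. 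This gives a bound of order $|a-a'|^{1/2-\varepsilon}$ for any $\varepsilon>0$, i.e.\ any $\Theta<1/2$.

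The main obstacle is book-keeping and translation: verifying that the exact statement of the fractional response in \cite{BBS} applies verbatim to our Cantor set $\Omega_*(a_*,\kappa_0)$ (as opposed to, e.g., a one-parameter deformation around a single good parameter), and checking that every constant appearing in the bound is uniform across $\Omega_*$. The ceiling $\Theta<1/2$ is intrinsic: it reflects the square-root singularity of the invariant density at the endpoints of $K(a)$, which cannot be absorbed by a test function of H\"older exponent $\geq 1/2$ without introducing a logarithmic loss, and cannot be improved by taking $\kappa_0$ larger.
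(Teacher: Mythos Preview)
Your proposal is correct and follows essentially the same route as the paper: deduce the bound from \cite[Cor.~1.6]{BBS} after verifying that $\Omega_*(a_*,\kappa_0)$ satisfies the hypotheses there with uniform constants (the paper delegates this verification to the proof of Sublemma~\ref{fact}). The paper makes one point explicit that you leave implicit: for general $a,a'\in\Omega_*$ (neither equal to $a_*$), the reference tower of \cite{BBS} is built at $a$ rather than at $a_*$, and $a'$ is treated as the perturbation; this is legitimate precisely because Proposition~\ref{keyprop} gives uniform constants over all of $\Omega_*$. Your heuristic sketch of the internals (resolvent identity, spike decomposition) is a reasonable summary, though \cite{BBS} works on the lifted tower operator rather than directly with the spike expansion.
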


Our proof of Proposition~\ref{p.udc} uses the following facts.

\begin{sublem}\label{fact} 
  For any $a\in \Omega_{BC}$, the density $h_a$ of $\mu_a$ lies
  in $H^s_q(I)$ for all $s\in [0,1/2)$ and $q \in (1, 2/(1+2s))$.
  In addition, for 
 any $(H_0,\kappa_0)$
  polynomially recurrent $a_*$, there exists
  $C_{s,q,a_*}<\infty$ such that
  \[
    \sup_{a\in \Omega_*{a_*,\kappa_0}}
    \|h_a\|_{H^s_q(I)}\le C_{s,q,a_*}\, .
  \]
\end{sublem}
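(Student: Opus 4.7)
The plan is to follow the strategy of \cite{BBS} where, for each CE parameter $a$, the density $h_a$ is expressed by a convergent series
\[
h_a \;=\; \sum_{k=0}^{\infty} \beta_k(a)\cdot u_{a,k},
\]
in which the building blocks $u_{a,k}$ are, up to multiplication by a uniformly bounded smooth function, one-sided square-root singularities of the form $|x-c_{k+1}(a)|^{-1/2}\,\mathbf{1}_{J_{a,k}}(x)$ on intervals $J_{a,k}$ whose lengths are controlled by $|(T_a^k)'(c_1(a))|^{-1}$, and in which the coefficients satisfy $|\beta_k(a)|\le C\,|(T_a^k)'(c_1(a))|^{-1/2}$. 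The first step would be to verify by a direct Fourier (or atomic) computation that a single singularity $|x-y|^{-1/2}\mathbf{1}_{[y-r,y+r]}$ has $H^s_q$-norm bounded by a constant times $r^{1/q-s-1/2}$, which is finite under the stated hypotheses $s\in[0,1/2)$ and $q(1+2s)<2$ and, crucially, does not blow up as $y$ varies inside $I$.

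For the first assertion, combining this pointwise estimate with the Collet--Eckmann property at $a\in\Omega_{BC}$ gives
\[
\|h_a\|_{H^s_q} \;\le\; C \sum_{k=0}^{\infty} |(T_a^k)'(c_1(a))|^{-1/2} \,|J_{a,k}|^{1/q-s-1/2},
\]
and the Collet--Eckmann lower bound forces geometric convergence of this series. For the second (uniform) assertion, the polynomial recurrence \eqref{eq:polapproach} supplies the quantitative separation $|c_{k+1}(a)-c|\ge k^{-\kappa_0}$ that is needed to control the bound period following each close return, and hence to produce uniformly bounded local distortion estimates along the inverse branches entering the construction of each $u_{a,k}$. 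Together with the uniform Collet--Eckmann constant $\lambda_{CE}$ from \eqref{eq:CE} and the H\"older distortion bounds from Lemma \ref{l.distortion}, this yields a geometric majorant of the series that is independent of $a\in\Omega_*(a_*,\kappa_0)$, producing the uniform constant $C_{s,q,\tilde a}$.

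The main obstacle is verifying that the construction of \cite{BBS}, which is typically stated for a single CE map with parameter-dependent constants, can in fact be quantified so that every constant entering the estimate of $\beta_k(a)$ and of $|J_{a,k}|$ depends only on $\lambda_{CE}$, on $\kappa_0$, and on the distortion constant from Lemma \ref{l.distortion}, all of which are uniform on $\Omega_*(a_*,\kappa_0)$ by Proposition \ref{keyprop}. Once this bookkeeping is done, the telescoping series converges in $H^s_q(I)$ uniformly in $a$, and the conclusion follows. A secondary delicate point is checking that the definition of the building blocks $u_{a,k}$ does not require a true Misiurewicz condition on $a$ itself, only on the reference point $\tilde a$ used to set up the tower structure; this is where the requirement of $\tilde a$ being polynomially recurrent (rather than asking it of every $a$) enters, and is exactly what \cite{BBS} is designed to accommodate.
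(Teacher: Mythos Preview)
Your treatment of the first claim is essentially the paper's: both use the spike decomposition of $h_a$ into a smooth part plus a geometrically decaying sum of one-sided square-root singularities $|x-c_k(a)|^{-1/2}$, and both appeal to the $H^s_q$ estimates on individual spikes (what the paper cites as \cite[Lemmas~11--12]{Sed}). One bibliographic correction: the series representation comes from \cite[Prop.~2.7]{BS1} (or Ruelle \cite{Ru} in the Misiurewicz case), not from \cite{BBS}.

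For the uniform bound the two arguments diverge. You propose to track every constant through the spike decomposition, relying on the uniform CE rate \eqref{eq:CE}, the polynomial recurrence \eqref{eq:polapproach}, and the distortion estimates of Lemma~\ref{l.distortion} to bound the coefficients $\beta_k(a)$ and the supports $J_{a,k}$ uniformly in $a$. The paper instead invokes the tower machinery of \cite{BBS}: it lifts the transfer operator to $\hat\LL_a$ on a Banach space $\BB_a^{W^1_1}$, uses the uniform Lasota--Yorke estimates and Keller--Liverani perturbation theory already established there to obtain a uniform spectral-gap bound \eqref{tran} on the lifted density $\hat h_a$, and then projects via $\Pi_a$ and the Sobolev embedding $W^1_1\subset H^s_q$. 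The paper's route is less direct but purchases uniformity wholesale from \cite{BBS}; your route is conceptually cleaner but the ``bookkeeping'' you allude to hides one genuine task: the decomposition \eqref{smoothrho'} involves a $C^1$ function $\psi_a$ and smooth cutoffs $\Xi^k_{a,\varsigma}$ whose $C^1$ norms must themselves be bounded uniformly in $a$, and establishing this directly essentially amounts to reproving the uniform transfer-operator bounds that the paper simply cites.
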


\begin{proof}
  In the Misiurewicz case, the first claim is \cite[Theorem~10]{Sed} ,
  using Ruelle's \cite[Theorem~9, Remark~16.a]{Ru} decomposition of
  $h_a$ into the sum of a $C^1$ function and an exponentially decaying
  sum of ``spikes'' $x\mapsto |x-c_k(a)|^{-1/2}$ and square root
  singularities $x\mapsto |x-c_k(a)|^{1/2}$. For a general
  $a \in \Omega_{BC}$, set
  $ T^{-k}_{a,\varsigma}:=(T_a^k|_{U_{k,a,\varsigma}})^{-1}$, for
  $k\ge 1$ and $\varsigma\in \pm$, where $U_{k,a,\varsigma}$ is the
  monotonicity interval of $T_a^k$ containing $c$, located to the
  right of $c$ for $\varsigma=+$, to the left of $c$ for
  $\varsigma=-$. Then, since we assumed $\lambda_{CE}>e^{ 14 \alpha_{BC}}$ in the proof of Proposition~\ref{keyprop},   use
  \cite[Prop~2.7]{BS1} that there exist a $C^1$ function
  $\psi_a \colon I \to \real_+$ and
  $C^\infty$ functions $\Xi^k_{a,\pm} \colon [0,1] \to [0,1]$
  supported in a neighbourhood of $c_k(a)$ in  $T^k_a(U_{k,a,\pm})$, such that
  \begin{equation}\label{smoothrho'}
    h_a(x)=\psi_a(x) + \sum_{k = 1}^{\infty}\sum_{\varsigma\in
      \{+,-\}}\chi_{k,a}(x)
    \frac{\Xi^k_{a,\varsigma}(T^{-k}_{a,\varsigma}(x))
      \psi_a(T^{-k}_{a,\varsigma}(x))
    }{|(T_a^{k})'(T^{-k}_{a,\varsigma}(x))|} \, ,
  \end{equation}
  where $\chi_{k,a}(x)=1_{\pm x<\pm c_k(a)}$ if $\pm T_a^k$ has a
  local maximum at $c$. Setting  $\Psi:=\Xi^k_{a,\varsigma}
    \cdot  \psi_a$, we find $C^1$ functions $\Psi_{k,\ell}$, for $\ell=1,2,3$,
with
 \begin{multline}\label{glu}
   \frac{
\Psi (T^{-k}_{a,\varsigma}(x))
   }{|(T_a^{k})'(T^{-k}_{a,\varsigma}(x))|} 
   = 
 \frac{\Psi_{k,1}(x)}{|x-c_k(a)|^{1/2} }
   + \Psi_{k,2}(x) |x-c_k(a)|^{1/2} + \Psi_{k,3}(x)
\, ,
  \end{multline}
 for any $x\in \supp (\chi_{k,a})$. Finally, use \cite[Lemmas~11--12]{Sed}.
  
  For the second claim, it is convenient to use an alternative
  decomposition of $h_a$. First recall that \cite[Cor 1.6]{BBS} gives
  a set $\Omega_\mathrm{slow}$ of full measure in the set of mixing CE
  parameters such that, for any $\tilde{a} \in \Omega_\mathrm{slow}$
  and each $\kappa_0>1$, there exist $H_0\ge 1$ and a set
  $\Delta_0(\tilde{a}, \kappa_0)\subset 
  \Omega_\mathrm{slow}$ of $(H_0,\kappa_0)$-polynomially recurrent (and thus transversal)
  parameters, with $\tilde{a}$ as a Lebesgue density point, such that
  Proposition~\ref{p.udc} holds for all $a \in \Delta_0$.  (It is
  unknown whether $\tilde a\in \Delta_0$.) The proof involves
  constructing a tower for each parameter in $\Delta_0$.
  We claim that, up to reducing the value of
    $\epsilon$ in the proof of Proposition~\ref{keyprop}, we can
    replace $\tilde a$ by $a_*$ and $\Delta_0$ by
    $\Omega_*(a_*, \kappa_0)$. Indeed, $\Delta_0$ was constructed in
    \cite[Prop.~2.1]{BBS}, and it suffices to observe that the
    required uniformity in constants is satisfied by \eqref{eq:CE} and
    \eqref{eq:polapproach}, while \cite[(8) and (7)]{BBS} are exactly
    \cite[V.(6.1), V.(6.2) in Prop.~V.6.1]{DMS}.

  Let then 
  \[
    \Pi_a (\hat \psi)(x)=\sum_{j \ge 0, \varsigma \in\pm} \frac{
      \lambda^j} {|(T_a^j)'(T^{-j}_{a,\varsigma}(x))|}
    \psi_j(T^{-j}_{a,\varsigma}(x))\, ,
  \]
  (for a suitable $\lambda>1$) be the projection from the tower
  with polynomial recurrence
  used in \cite{BBS}, and let $\hat \LL_a$ be the lift
  $\LL_a \Pi_a=\Pi_a \hat \LL_a$ of the transfer operator
  $\LL_a \varphi(x)=\sum_{T_a(y)=x}
  \varphi(y)/|T'_a(y)|$. Then\footnote{\cite[(66)]{BBS} gives uniform
    Lasota--Yorke estimates.  \cite[Lemma~3.8, Lemma~4.5, Lemma~4.6,
    Prop.~4.1]{BBS} give the weak norm bounds needed for
    Keller--Liverani \cite{KeLi}.} 
  there exist $C<\infty$ and $\theta<1$ such that, letting
  $\|\cdot\|'_a$ be the norm of the Sobolev space $\BB_a^{W^1_1}$
  of \cite{BBS},
  \begin{equation}\label{tran}
    \|\hat \LL_a^n(\hat \varphi)-\hat h_a \hat \nu(\hat
    \varphi)\|'_a\le C \|\hat \varphi\|'_a \theta^n\, , \qquad \forall 
    \hat \varphi \in \BB_a^{W^1_1}\, , \quad \forall a\in
    \Omega_{\tilde{a}}\, ,
  \end{equation}
  where $\hat h_a$ is the fixed point\footnote{The fixed point
    property determines $\hat h_a$ by its value on the level zero
    of the tower.} of $\hat \LL_a$ on $\BB_a^{W^1_1}$ normalised
  by $\int \Pi_a \hat h_a \, dm = 1$, while $\hat \nu$, the
  nonnegative measure whose density with respect to Lebesgue in
  the level $j$ of the tower is $\lambda^j$, is the fixed point
  of the dual of $\hat \LL_a$ (see \cite[(85)]{BS1}, note that
  $\nu(\hat h_a)=1$ is automatic).  Since $\Pi_a\hat h_a=h_a$ and
  the $W^1_1$ norm dominates any $H^s_q$ norm on $I$ if
  $s\in [0,1)$ and $1< q<1/s$ (by the Sobolev embedding, more
  precisely \cite[Chapter~2]{RS} the bounded
  inclusions
  $ W^1_1 \subset W^\sigma_1=F^{\sigma}_{1,1}\subset
  F^{\sigma}_{1,2}\subset F^{s}_{q,2}=H^s_q$, if
  $\sigma=1+s-1/q\in (0,1)$ and $q\in (1,\infty)$), the
  decomposition \eqref{glu} combined with the uniform bound
  \eqref{tran} (for $\hat \varphi$ vanishing on all levels
  $\geq 1$ and constant on level zero of the tower, with
  $\hat \nu(\varphi)=1$) gives the second claim of the sublemma,
  using again \cite[Lemmas~11--12]{Sed}.
\end{proof}

\begin{proof}[Proof of Proposition~\ref{p.udc}]
  Recall from the proof of Proposition~\ref{keyprop} that we have
  $\lambda_{CE}>e^{14 \alpha_{BC}}$.  By mollification, it is
  enough to prove both bounds for $C^1$ functions $\varphi$.  It
  is in fact enough to show the first bound for $\varphi\in C^1$:
  Indeed, again by mollification (see e.g.\ the proof of
  \cite[Lemma~14]{Sed}), if the first bound holds for
  $\varphi\in C^1$, then it holds for any $\varphi \in H^s_q(I)$
  with $q>1$ and $s>0$.  Therefore, since the density $h_a$ of
  $\mu_a$ lies in $H^s_q(I)$ for all $s\in (0,1/2)$ and
  $q \in (1, 2/(1+2s))$ by Sublemma~\ref{fact} (with norm
  uniformly bounded in $a$), the second bound
  follows from the first bound for $\varphi \in C^1$ (using that
  $C^1$ functions are bounded multipliers on $H^s_q$).

  Next, we observed in the proof of Sublemma~\ref{fact} that we
  can replace the set called $\Delta_0$ in
  \cite[Cor.~1.6]{BBS} by $\Omega_*(a_*, \kappa_0)$. The
  first bound for Lipschitz continuous $\varphi$ thus follows
  from the second assertion of \cite[Cor.~1.6]{BBS}, since
  $\Omega_*\subset [a_\mathrm{mix}, 4)$.  Indeed, note first that
  $a$ is topologically mixing if and only if its renormalisation
  period $P_{a}$ is equal to one.  Second, observe that the
  constant $C_{\varphi, \psi}$ in the second claim of
  \cite[Cor.~1.6]{BBS} can be replaced by
  $C \|\varphi\|_\varpi\|\psi\|_{L^1(d\mu_a)}$, for a constant
  $C$ uniform in $a$ in view of \cite[Lemma~4.5, Lemma~4.6]{BBS}
  and the principle of uniform boundedness.  More precisely,
  using the notation from the proof of Sublemma~\ref{fact}, we
  have
  \begin{align*}
    &\int (\psi \circ T_a^n) \varphi \, dm = \int \psi \Pi_a (\hat
      \LL_a^n(\hat \varphi)) \, dm \qquad \qquad \mbox{ if } \Pi_a(\hat
      \varphi)=\varphi \, , \\
    & \int (\psi \circ T_a^n) \varphi h_a \,
      dm= \int \psi \Pi_a(\hat \LL_a^n(\hat \varphi_a)) \, dm \qquad\,\,
      \mbox{ if } \Pi_a(\hat \varphi_a)=\varphi h_a \, .
  \end{align*}
  Since\footnote{\label{a=4}The Banach space of \cite{BBS}
    requires that the function on level zero of the tower be
    supported in $(0,1)$, so this proof cannot cover the case
    $a=4$. } $\Pi_a\hat h_a=h_a$, any Lipschitz continuous
  $\varphi$ can be written as $\Pi_a (\hat \varphi)$ (take
  $\hat \varphi_0= \varphi$ on the level zero, and
  $\hat \varphi_j\equiv 0$ on levels $j\ge 1$) such that, on the
  one hand, $\|\hat \varphi\|_a'\le C\|\varphi\|_1$ uniformly in
  $a$, and, on the other hand,
  $\hat \nu(\hat \varphi) = \int \varphi \, dm$, we conclude by
  applying \eqref{tran} from the proof of Sublemma~\ref{fact}.
\end{proof}

\begin{proof}[Proof of Proposition~\ref{p.udc'}]
  If $a=a_*$, the bound is an immediate consequence of the first
  claim of \cite[Cor.~1.6]{BBS}, since we can
  replace the set denoted $\Delta_0$ there by
    $\Omega_*(a_*, \kappa_0)$, as observed in the proof of
    Sublemma~\ref{fact} and used in the proof of
    Proposition~\ref{p.udc}.  If $a\ne a_*$, the uniformity of
    the constants given by Proposition~\ref{keyprop} ensures that
    we may construct the reference tower in \cite{BBS} at $a$
    (instead of $a_*$), viewing $a'$ as a perturbation of $a$. 
\end{proof}

\subsection{H\"older Regularity of the Variance
  $\sigma_a(\varphi)$}
\label{regvar}
Propositions~\ref{p.udc} and~\ref{p.udc'} will imply the
following regularity of $a\mapsto\sigma_a(\varphi)$ on
$\Omega_{*}$.
      
\begin{lemma}[Regularity of $\sigma_a(\varphi)$]
  \label{l.varregularity}
  For any $\varpi \in (0,1]$, there exist
  $\theta \in (0,\min\{1/2, \varpi\})$ and $C<\infty$ such that,
  for each $\varphi \in C^\varpi$ with $\sigma_{a_*}(\varphi)>0$
  there exists $\epsilon_\varphi>0$ such that
  \[
    C_{\epsilon_\varphi}(\varphi):=\inf_{a\in \Omega_{a_*}\cap
      [a_*-\epsilon_\varphi, a_*+\epsilon_\varphi]}
    \sigma_{a}(\varphi)>0\, ,
  \]
  and such that for all
  $a,a'\in \Omega_{*}(a_*,\kappa_0)\cap [a_*-\epsilon_\varphi,
  a_*+\epsilon_\varphi]$ we have
  \begin{equation}
    \label{eq.varregularity}
    |\sigma_a(\varphi)-\sigma_{a'}(\varphi)|\le\frac{ C}{2
      C_\epsilon(\varphi)} \|\varphi\|_\varpi |a-a'|^\theta\,.
  \end{equation}
\end{lemma}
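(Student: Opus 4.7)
The strategy is to first bound the quadratic quantity $|\sigma_a(\varphi)^2 - \sigma_{a'}(\varphi)^2|$ by $C\|\varphi\|_\varpi^2 |a-a'|^\theta$ via a truncation argument, then to deduce continuity of $\sigma_a$ at $a_*$ (which yields positivity of $C_{\epsilon_\varphi}(\varphi)$), and finally to divide to obtain \eqref{eq.varregularity}.

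Writing $\bar\varphi_a := \int \varphi \, d\mu_a$ and $\tilde C_i(a) := \int (\varphi - \bar\varphi_a)(\varphi \circ T_a^i - \bar\varphi_a) \, d\mu_a$, the definitions \eqref{defsig}--\eqref{eq.sigma} give $\sigma_a^2 = \tilde C_0(a) + 2\sum_{i\geq 1} \tilde C_i(a)$. By the second bound of Proposition~\ref{p.udc}, $|\tilde C_i(a)| \leq C\|\varphi\|_\varpi^2 \rho_\varpi^i$ with $\rho_\varpi < 1$ uniform over $a \in \Omega_*(a_*,\kappa_0)$, so for any $N \geq 1$
\[
|\sigma_a^2 - \sigma_{a'}^2| \leq \sum_{i=0}^N |\tilde C_i(a) - \tilde C_i(a')| + C\|\varphi\|_\varpi^2 \rho_\varpi^N \, .
\]
For each $0 \leq i \leq N$ I would telescope $\tilde C_i(a) - \tilde C_i(a')$ into three pieces: (a) the change of iterate $\varphi \circ T_a^i - \varphi \circ T_{a'}^i$, controlled via \eqref{eq.xaa} and H\"older continuity of $\varphi$, contributing $C\|\varphi\|_\varpi^2 \Lambda^{i\varpi}|a-a'|^\varpi$; (b) the change of measure $d\mu_a - d\mu_{a'}$ integrated against $\psi_i := \varphi (\varphi \circ T_{a'}^i)$, controlled by Proposition~\ref{p.udc'} using $\|\psi_i\|_{1/2} \leq C\|\varphi\|_\varpi^2 \Lambda^{i/2}$; and (c) the mean shift $\bar\varphi_a^2 - \bar\varphi_{a'}^2$, controlled again by Proposition~\ref{p.udc'}. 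Combining, $|\tilde C_i(a) - \tilde C_i(a')| \leq C\|\varphi\|_\varpi^2 \Lambda^i |a-a'|^{\min(\varpi,\Theta)}$. Choosing $N = \lfloor -c\log|a-a'|\rfloor$ with $0 < c < \min(\varpi,\Theta)/\log\Lambda$ balances the two exponential contributions and yields $|\sigma_a^2 - \sigma_{a'}^2| \leq C\|\varphi\|_\varpi^2 |a-a'|^\theta$ for some $\theta \in (0, \min(1/2,\varpi))$.

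From this quadratic bound, $\sigma_a^2$ is continuous at $a_*$ along $\Omega_*$, hence so is $\sigma_a = \sqrt{\sigma_a^2}$. Using $\sigma_{a_*}(\varphi)>0$, I would choose $\epsilon_\varphi > 0$ small enough to guarantee $\sigma_a(\varphi) \geq \sigma_{a_*}(\varphi)/2$ on $\Omega_* \cap [a_*-\epsilon_\varphi, a_*+\epsilon_\varphi]$, giving $C_{\epsilon_\varphi}(\varphi) \geq \sigma_{a_*}(\varphi)/2 > 0$. Writing $|\sigma_a - \sigma_{a'}| = |\sigma_a^2 - \sigma_{a'}^2|/(\sigma_a + \sigma_{a'})$, the lower bound $\sigma_a + \sigma_{a'} \geq 2 C_{\epsilon_\varphi}(\varphi)$ in the denominator together with the trivial upper bound $\sigma_a \leq C\|\varphi\|_\varpi$ (to absorb one factor into the overall constant) yields \eqref{eq.varregularity}.

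The main obstacle is the tension between the exponential growth $\Lambda^i$ (appearing in \eqref{eq.xaa} and in the $C^{1/2}$ norm of $\psi_i$) and the uniform exponential decay rate $\rho_\varpi$ from Proposition~\ref{p.udc}. Since both are exponential, only a logarithmic truncation is available and $\theta$ is necessarily strictly below $\min(1/2,\varpi)$. A secondary technical nuisance arises when $\varpi < 1/2$: Proposition~\ref{p.udc'} is stated for $C^{1/2}$ observables, so one needs either to apply its $H^s_q$ analogue to $\psi_i$ or to mollify $\varphi$ first and absorb the mollification error using uniform exponential decay.
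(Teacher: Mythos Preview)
Your proposal is correct and follows essentially the same approach as the paper: truncate the Green--Kubo sum at level $k_0$ (your $N$), bound the tail via the uniform exponential decay in Proposition~\ref{p.udc}, split each retained term into change-of-iterate (via \eqref{eq.xaa}), change-of-measure (via Proposition~\ref{p.udc'}), and mean-shift contributions, then optimise in $k_0$ and divide through by $\sigma_a+\sigma_{a'}$. The paper likewise treats the case $\varpi<1/2$ by mollifying $\varphi$ to a $C^{1/2}$ function and optimising in the mollification scale, exactly as you indicate in your final paragraph.
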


\begin{proof}
  Let $k_0>1$ be a large integer to be chosen at the end of the
  proof.  By the second claim of Proposition~\ref{p.udc}, there
  exist $\rho=\rho_\varpi<1$ and $C_0$ such that
  \begin{multline*}
    \sum_{k> k_0} \biggl| \int \biggl( \varphi-\int\varphi \,
    d\mu_{a} \biggr) \cdot \biggl( \biggl( \varphi - \int\varphi
    \, d\mu_{a} \biggr) \circ T_{ a}^k \biggr) \, d\mu_{a}
    \biggr| \\
    \le C_0 \|\varphi\|_\varpi^2 \cdot
    \frac{\rho^{k_0}}{1-\rho}\, , \qquad \forall k_0\ge 1\, ,
    \quad \forall a \in \Omega_{a_*} \, , \quad \forall \varphi
    \in C^\varpi\,.
  \end{multline*}
Set $A_a=\int \varphi \, d\mu_a$.
  Since
  $\int \bigl ((\varphi-A_a)\circ T_a^k \bigr )(\varphi -A_a) \,
  d\mu_a = \int (\varphi\circ T_a^k) \varphi \, d\mu_a -A_a^2$,
  we have
  \begin{align*}
    |\sigma_a(\varphi)^2-\sigma_{a'}(\varphi)^2|
    \leq 2 & \sum_{k=0}^{k_0-1} \biggl |\int\varphi(\varphi\circ
          T_a^k) \,d\mu_a - \int\varphi(\varphi\circ T_{a'}^k) \,
          d\mu_{a} \biggr | \\
        & + 2\sum_{k=0}^{k_0-1} \biggl |\int\varphi(\varphi\circ
          T_{a'}^k) \, d\mu_a - \int\varphi(\varphi\circ
          T_{a'}^k) \, d\mu_{a'} \biggr | \\
        & + 2\sum_{k=0}^{k_0-1}\biggl | \biggl( \int \varphi \,
          d\mu_a \biggr)^2 - \biggl( \int \varphi \, d\mu_{a'}
          \biggr)^2 \biggr | \\
        & + 4 C_0 \|\varphi\|^2_\varpi\frac{\rho^{k_0}}{1-\rho}\,
          , \qquad \forall k_0\ge 1 \,.
  \end{align*}
  Assume for a moment that $\varpi\ge 1/2$.  The
  $\varpi$-H\"older constant of
  $\varphi (\varphi \circ T_{\bar a}^k)$ (for $\bar a=a$ or $a'$)
  is bounded by $\Lambda^{ k}\|\varphi\|^2_\varpi$.  Thus,
  Proposition~\ref{p.udc'} gives for any $\Theta<1/2$ a constant
  $C_1=C_1(\Theta)$ such that for $a, a' \in \Omega_{a_*}$, and
   $\varphi\in C^\varpi$
  \begin{multline*}
    |\sigma_a(\varphi)^2-\sigma_{a'}(\varphi)^2| \leq k_0 C_1
    \|\varphi\|^2_\varpi \Lambda^{ k_0}|a-a'|^\Theta
    + C_0 \|\varphi\|^2_\varpi \frac{\rho^{k_0}}{1-\rho}\\
    + 2 \sum_{k=0}^{k_0-1} \biggl |\int\varphi(\varphi\circ T_a^k)
    \, d\mu_a - \int\varphi(\varphi\circ T_{a'}^k) \, d\mu_{a}
    \biggr |\, ,\qquad \forall k_0\geq 1 .
  \end{multline*} 
  Next, \eqref{eq.xaa} gives  that
  \begin{align*}
    \int
    &|\varphi\circ T_a^k-\varphi\circ T_{a'}^k |\, d \mu_a \leq
      \|\varphi\|_{\varpi} (C \Lambda^{k}|a-a'|)^{\varpi}\, .
  \end{align*}
  Therefore, we find
  \begin{multline}
    \label{concl}
    \bigl|\sigma_a(\varphi)^2-\sigma_{a'}(\varphi)^2 \bigr| \leq
    k_0 C_1 \|\varphi\|^2_\varpi \Lambda^{ k_0}|a-a'|^\Theta +
    4 C_0 \|\varphi\|^2_\varpi \frac{\rho^{k_0}}{1-\rho}\\
    + k_0 \|\varphi\|_{\varpi} ( C\Lambda^{k_0}|a-a'|)^{\varpi}\,
    .
  \end{multline}
  We conclude the proof for $\varpi \ge 1/2$ by dividing
  \eqref{concl} by $|a-a'|^{\theta}$, for small enough $\theta>0$
  and optimising in $k_0$, using also
  $(\sigma_a - \sigma_{a'}) (\sigma_a + \sigma_{a'}) = \sigma_a^2
  - \sigma_{a'}^2$.
	
  If $\varpi \in (0,1/2)$, mollification gives
  $\varphi_\upsilon\in C^{1/2}$ and $C_4$ such that
  \[
    \|\varphi_\upsilon\|_{1/2} \leq C_4
    \upsilon^{\varpi-1/2}\|\varphi\|_\varpi \, , \,\, \sup
    |(\varphi\circ T_{\bar a}^k)\varphi - (\varphi_\upsilon \circ
    T_{\bar a}^k)\varphi_\upsilon| \leq C_4
    \upsilon^{\varpi}\Lambda^k\|\varphi\|_\varpi \, ,
  \]
  for all small $\upsilon >0$, all $0\le k \le k_0$, and all
  $\bar a \in \Omega_{a_*}$. To conclude, optimise in
  $\upsilon=|a-a'|^{\theta_0}$ for small $\theta_0>0$, taking
  $\theta$ smaller (in particular $\theta<\varpi\theta_0$).
\end{proof}

\section{Switching Locally from the Parameter to the Phase Space}
\label{s.switch}

Let $a_*$,  $\PP_j(a_*,\kappa_0)$, and
$\Omega_*=\Omega_*(a_*,\kappa_0)$ be as in
Proposition~\ref{keyprop} for $\kappa_0 \ge 11 / (3d_1)$,
and fix $\varpi\in (0,1)$.  This
section is devoted to Proposition~\ref{p.mainestimate}, the main
estimate (analogous to \cite[Prop.~5.1]{DS}) towards a law of
large numbers for the squares of the blocks which will be defined
in Section~\ref{s.asip} (see Lemma~\ref{l.LLNy}).

From now on, fix $\varpi\in (0,1)$ and a $\varpi$-H\"older
continuous function $\varphi \colon I\to\real$, recalling
$\varphi_a$, $\sigma_a(\varphi)$ from \eqref{eq.varphia},
\eqref{eq.sigma}, and assume $\sigma_{a_*}(\varphi)>0$.
Lemma~\ref{l.varregularity} gives $\epsilon_\varphi>0$ such that
\begin{equation}\label{omegaphi}
  \sigma_a(\varphi)>0 \,, \quad \forall
  a\in\Omega_*^\varphi:=\Omega_*\cap [a_*-\epsilon_\varphi,
    a_*+\epsilon_\varphi]\, .
\end{equation}
If $\epsilon_\varphi<\epsilon$, we replace $\Omega_*$ by
$\Omega_*^\varphi$ by replacing $\epsilon$ in the proof of
Proposition~\ref{keyprop} with $\epsilon_\varphi$. (This is
harmless as it can only improve the constants.)

\begin{remark}[$\theta$-H\"older Whitney Extensions of
  $\varphi_a$ and $\xi_n(a)$]
  \label{Whitney}
  By Proposition~\ref{p.udc'}, the function
  $a \mapsto \int \varphi \, d \mu_a$ is $\Theta$-H\"{o}lder
  continuous on $\Omega_*$ for any $\Theta <1/2$. By
  Lemma~\ref{l.varregularity}, the function
  $a \mapsto \sigma_a(\varphi)\ge 0$ is $\theta$-H\"older
  continuous on $\Omega_*$ for some
  $\theta<\min \{1/2, \varpi\}$, and uniformly bounded away from
  zero on $\Omega_*^\varphi$.  Taking $\Theta \ge \theta$, the
  map
  $a \mapsto \varphi_a(u) = (\varphi(u)-\int \varphi
  d\mu_a)/\sigma_a$ is $\theta$-H\"older continuous on
  $\Omega_*^\varphi$ uniformly in $u\in I$.  By the Whitney
  extension theorem, we extend each map $a \mapsto \varphi_a(u)$
  to a $\theta$-H\"older continuous map on
  $[a_*-\epsilon_\varphi, a_*+\epsilon_\varphi]$, uniformly in
  $u\in I$. In addition, there exists $\widetilde C < \infty$
  such that
  \begin{equation}
    \label{eq.alphal1infty}
    \|\varphi_a\|_\infty\le \|\varphi_a\|_{\varpi}
    \le \widetilde C\|\varphi\|_{\varpi}\,, \qquad \forall a\in
    [a_*-\epsilon_\varphi, a_*+\epsilon_\varphi] \,.
  \end{equation}
  Then, using \eqref{eq.xaa}, we may extend each map
  $a \mapsto \xi_n(a)=\varphi_a(T_a^{n+1}(c))$ to a
  $\theta$-H\"older continuous map on
  $[a_*-\epsilon_\varphi, a_*+\epsilon_\varphi]$, with
  $\theta$-H\"older constant bounded by
  $C\Lambda^{\theta (n+1)}$: Indeed, recalling
  $x_n(a) = T_a^{n+1}(c)$, just decompose
  \begin{equation}\label{decompose}
    \xi_n(a)-\xi_n(a')=
    \varphi_a(x_n(a))-\varphi_{a'}(x_n(a))
    +\varphi_{a'}(T_a^{n+1}(c))-\varphi_{a'}(T_{a'}^{n+1}(c))\, .
  \end{equation} 
\end{remark}

Fix $\alpha \in (0,1)$ such that (in view of the use
  of \eqref{need} in Corollary~\ref{c.switch})
\begin{equation}\label{inview}
   \frac{M_0}{1 - \alpha} \leq
  \frac{3}{\alpha}\, .
\end{equation}
Fix $q>1$ and $0<s<\min\{\varpi, 1/q\}$, and let 
\begin{equation}
  \label{eq.lambda0}
  \lambda_0 = \min (\lambda_{CE}^{\theta}, \rho^{-1/2}) > 1\,,
\end{equation}
where $\lambda_{CE}>1$ is given by \eqref{eq:CE}, while
$\rho=\max \{\rho^s_q, \rho_\varpi\}<1$ is given by
Proposition~\ref{p.udc}, and $\theta\in (0,\min\{1/2,\varpi\})$
is given by Lemma~\ref{l.varregularity}.  Finally, recalling
$\Lambda$ from \eqref{eq.la}, let $\eta\in (0,1/2)$ be so small
that
\begin{equation}
  \label{eq.eta}
  \Bigl( \frac{2 \Lambda}{\lambda_{CE}} \Bigr)^{\eta} \leq
  \lambda_0 < \frac{\lambda_{CE}^\varpi }{\Lambda^{\eta \varpi}
  }\,.
\end{equation}

Define the expectation $E(\psi)$ of $\psi\in L^\infty(\Omega_*^\varphi)$
by\footnote{We restrict to the Cantor set
    $\Omega_*^\varphi$ here and thus in
    \eqref{eq.mainestimate}. The bound \eqref{lem:measureinomega}
    is used in the proof of \eqref{eq.mainestimate} (but not for
    \eqref{eq.mainestimatelocal}, Lemma~\ref{l.switch}, or
    Corollary~\ref{c.switch}).}
\begin{equation}\label{expect}
  E(\psi):= \frac{1}{m (\Omega_*^\varphi)}
  \int_{\Omega_*^\varphi} \psi \, dm \, .
\end{equation}
The following result is the key estimate on
$\xi_j(a)=\varphi_a (T_a^{j+1}(c))$.

\begin{proposition}
  \label{p.mainestimate}
  There exist $C_\varphi<\infty$ and $K<\infty$
  such that  
  \begin{equation}
    \label{eq.mainestimate}
    \Biggl| E \Biggl(\sum_{j=k}^{k+n-1} \xi_j \Biggr)^2 -n \Biggr|
    \leq C_\varphi \, , \quad \forall k \geq \max \{2 K,
    [2/\eta]\}\, , \ \forall 1\leq n\leq \eta k/2\,,
  \end{equation}
  and, setting\footnote{\label{1/4}The stretched
      exponent $1/4$ for $v(k)$ and the lower bound can be
      replaced by any number in $(0,1)$, without changing the
      statements, up to adjusting intermediate constants.}
  $v(k) = [k - k^{1/4}]$, for every nontrivial interval
  $\om \subset \tilde \om\in\PP_{v(k)}$ with
  $\omega\cap \Omega_*^\varphi\ne \emptyset$ and
  $\lambda_0^{-k^{1/4}} \leq |x_{v(k)} (\omega)| \leq
  {v(k)}^{-3/\alpha}$, we have
  \begin{equation}
    \label{eq.mainestimatelocal}
    \biggl| \frac 1 {|\omega|} \int_\omega \biggl(
    \sum_{j=k}^{k+n-1} \xi_j \biggr)^2 \, dm - n \biggr| \leq
    C_\varphi \, , \quad \forall k \geq [2/\eta]\, , \ \forall
    1 \leq n \leq \eta k / 2 \, ,
  \end{equation}
  and,  for any  sequence $\Psi_k$ with
  $C_\Psi:=\sup_k k^{-8/3}\sup |\Psi_k|<\infty$, and for  any refinement $\QQ_{v(k)}$ of $\mathcal{P}_{v(k)}$ such\footnote{We  have
 $\lambda_0^{-k^{1/4}} \leq |x_v (\omega)|$
  for all  $\omega \in \PP_{v(k)}$ by \eqref{theproof}.} that
  $\lambda_0^{-k^{1/4}} \leq |x_v (\omega)| \leq v^{-3/\alpha}$
  for  all $\omega \in \QQ_{v(k)}$, setting
\begin{equation}
  \label{newdef}
\QQ_{*,v(k)}:=\{\omega \in \QQ_{v(k)} \mid \omega\cap \Omega_*^\varphi \ne \emptyset \} \, , \qquad
  \Omega_{*,v(k)}^\QQ=\cup_{\omega \in \QQ_{*,v(k)}}\omega\,,
\end{equation}
we have
  \begin{equation}
    \label{eq.extraestimate}
    \Biggl| E( \Psi_k) -
    \frac{1}{|\Omega_{*,v(k)}^\QQ|} \int_{\Omega_{*,v(k)}^\QQ}
    \Psi_k\, dm  \biggr| \leq C_\Psi C_\varphi \, ,\,\, \forall k
    \geq [2/\eta] \, .
  \end{equation}
\end{proposition}
 
Proposition~\ref{p.mainestimate} is proved in
Section~\ref{keysection}. Like for its analogue
\cite[Prop.~5.1]{DS}, the first step will be to show the
local estimate \eqref{eq.mainestimatelocal} using
Lemma~\ref{l.switch} through its Corollary~\ref{c.switch} (the
analogues of \cite[Lemma~5.3, Cor.~5.5]{DS}).

\begin{lemma}[Switching Locally from Parameter to Phase Space]
  \label{l.switch}
  Fix $\ell_0 \in \{1,2,3, 4\}$. There exists $C<\infty$ such
  that we have, for any integers
  \[
    n \leq n_i \leq n + \eta n\, , \qquad 1 \leq i \leq \ell_0 \,
    ,
  \]
  for every $\tilde{\omega} \in \PP_n$ and each
  nontrivial interval $\omega \subset \tilde{\omega}$
  with $\omega\cap \Omega_*^\varphi\ne \emptyset$,
  \begin{align}
    \label{eq.switch}
    \int\limits_{x_n (\omega)} \biggl|
    \prod_{\ell=1}^{\ell_0} \xi_{n_\ell}
    (x_n|_\omega^{-1} (y)) &- \prod_{\ell=1}^{\ell_0}
    \varphi_{a_0}
    (T_{a_0}^{n_\ell-n}(y)) \biggr| \,
      dy \\
    \nonumber
    &\leq C \lambda_0^{-n} |x_n (\omega)|\, ,
      \qquad \forall
      a_0 \in \omega \cap \Omega_*^\varphi\,.
  \end{align}
\end{lemma}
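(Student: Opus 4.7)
The plan is to use \eqref{monot} to change variable $a = x_n|_\omega^{-1}(y)$, exploit the semigroup identity $x_{n_\ell}(a) = T_a^{n_\ell - n}(x_n(a))$ to rewrite the first product in phase space, and then reduce to a pointwise comparison between $\varphi_a\circ T_a^{k_\ell}$ and $\varphi_{a_0}\circ T_{a_0}^{k_\ell}$ via telescoping. Setting $k_\ell := n_\ell - n \in [0,\eta n]$ and $a := x_n|_\omega^{-1}(y)$, the semigroup identity gives
\[
  \xi_{n_\ell}(x_n|_\omega^{-1}(y)) = \varphi_a(x_{n_\ell}(a)) = \varphi_a(T_a^{k_\ell}(y))\, .
\]
Bounding each factor by $\widetilde C\|\varphi\|_\varpi$ via \eqref{eq.alphal1infty} and telescoping, the integrand is dominated by $\ell_0\,(\widetilde C\|\varphi\|_\varpi)^{\ell_0-1}$ times
\[
  \max_{1\le \ell \le \ell_0}\bigl|\varphi_a(T_a^{k_\ell}(y)) - \varphi_{a_0}(T_{a_0}^{k_\ell}(y))\bigr|\, .
\]

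Next I would split each pointwise difference as
\[
  \bigl(\varphi_a - \varphi_{a_0}\bigr)(T_a^{k_\ell}(y)) + \bigl(\varphi_{a_0}(T_a^{k_\ell}(y)) - \varphi_{a_0}(T_{a_0}^{k_\ell}(y))\bigr).
\]
The first summand is at most $C|a - a_0|^\theta$, using the $\theta$-H\"older Whitney extension of $a \mapsto \varphi_a(u)$ (uniform in $u\in I$) from Remark~\ref{Whitney}, valid for $a$ in $\omega\subset[a_*-\epsilon_\varphi,a_*+\epsilon_\varphi]$ and $a_0\in\omega\cap\Omega_*^\varphi$. The second summand, using $\varpi$-H\"older regularity of $\varphi_{a_0}$ and the phase-space comparison \eqref{eq.xaa}, is at most $\widetilde C\|\varphi\|_\varpi(C\Lambda^{k_\ell}|a - a_0|)^\varpi$. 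Since $\omega\subset\tilde\omega\in\PP_n$ contains both $a$ and $a_0$, the small-image bound \eqref{small} applied to $\tilde\omega$ gives $|a - a_0| \leq |\omega| \leq |\tilde\omega|\leq C\lambda_{CE}^{-n}|x_n(\tilde\omega)|$, and in particular $|a-a_0|\le C\lambda_{CE}^{-n}$.

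Finally, using $k_\ell \leq \eta n$ and $|x_n(\omega)| \leq 1$, the choice \eqref{eq.lambda0} gives $\lambda_{CE}^{-n\theta} \leq \lambda_0^{-n}$, while \eqref{eq.eta} gives $(\Lambda^{\eta}\lambda_{CE}^{-1})^{n\varpi} \leq \lambda_0^{-n}$, so the pointwise integrand is bounded by $C\|\varphi\|_\varpi^{\ell_0}\lambda_0^{-n}$ uniformly in $y \in x_n(\omega)$. Integrating over $x_n(\omega)$ yields \eqref{eq.switch}. The only delicate ingredient is the double calibration of $\lambda_0$: it must simultaneously dominate the parameter loss $\lambda_{CE}^{-n\theta}$ (coming from H\"older dependence of $\varphi_a$ on $a$) and the competition $(\Lambda^\eta/\lambda_{CE})^{n\varpi}$ between orbit spreading and parameter contraction. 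Both constraints are exactly what \eqref{eq.lambda0}--\eqref{eq.eta} encode, so the proof is a clean assembly of the preceding regularity and distortion estimates rather than a new difficulty.
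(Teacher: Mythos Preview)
Your proposal is correct and follows essentially the same route as the paper: telescope the product, reduce to a pointwise bound on $|\varphi_a(T_a^{k_\ell}(y))-\varphi_{a_0}(T_{a_0}^{k_\ell}(y))|$, split it into the $(\varphi_a-\varphi_{a_0})$ term (controlled by Remark~\ref{Whitney} and \eqref{small}) and the orbit-comparison term (controlled by \eqref{eq.xaa} and the $\varpi$-H\"older bound), and close with \eqref{eq.lambda0}--\eqref{eq.eta}. The only cosmetic difference is that the paper writes the decomposition at the point $x_{n_\ell}(a)$ before invoking the semigroup identity, whereas you apply the identity first; the estimates and their justifications are identical.
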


\begin{corollary}
  \label{c.switch}
  There exists $C_3 >1$ such that, for $\ell_0$, $n$,
  $ n_1, \ldots, n_{\ell_0}$, and $\omega$ as in
  Lemma~\ref{l.switch}, if, in addition,
  $ |x_n (\omega)| \le n^{-3/\alpha}$, then for any
  $a_0 \in \omega\cap \Omega_*^\varphi$,
  \begin{multline*}
    \biggl| \frac1{|\omega|} \int_{\omega}
    \prod_{\ell=1}^{\ell_0} \xi_{n_\ell} (a) \, da -
    \frac{1}{|x_n (\omega)|} \int_{x_n (\omega)}
    \prod_{\ell=1}^{\ell_0} \varphi_{a_0} (T_{a_0}^{n_\ell - n}
    (y)) \, dy \biggr| \\ \leq C_3 ( |x_n (\omega)|^\alpha+
    \lambda_0^{-n})\,.
  \end{multline*}
\end{corollary}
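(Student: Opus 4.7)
\medskip

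\noindent\textbf{Proof plan for Corollary~\ref{c.switch}.} The strategy is to perform the change of variables $y = x_n(a)$ on $\omega$ and then handle separately two independent error terms: a \emph{distortion} error, controlled by Lemma~\ref{l.distortion}, and a \emph{switching} error, controlled by Lemma~\ref{l.switch}.

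First, by \eqref{monot} the map $x_n$ is a diffeomorphism from $\omega$ onto $x_n(\omega)$, so
\[
  \frac{1}{|\omega|} \int_\omega \prod_{\ell=1}^{\ell_0} \xi_{n_\ell}(a) \, da
  = \frac{1}{|\omega|} \int_{x_n(\omega)} \prod_{\ell=1}^{\ell_0}
    \xi_{n_\ell}(x_n|_\omega^{-1}(y)) \, \frac{dy}{|x_n'(x_n|_\omega^{-1}(y))|}\, .
\]
By the mean value theorem there exists $a^* \in \omega$ with $|x_n'(a^*)| = |x_n(\omega)|/|\omega|$, so that replacing the Jacobian factor $1/(|\omega|\,|x_n'|)$ by $1/|x_n(\omega)|$ amounts to replacing $|x_n'(a^*)|/|x_n'(a)|$ by $1$ inside the integral.

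Next, the assumption $|x_n(\omega)| \le n^{-3/\alpha}$ together with \eqref{inview} yields $|x_n(\omega)| \le n^{-M_0/(1-\alpha)}$, so Lemma~\ref{l.distortion} applies: for all $a \in \omega$,
\[
  \biggl| \frac{|x_n'(a^*)|}{|x_n'(a)|} - 1 \biggr| \le C |x_n(\omega)|^\alpha\, .
\]
Since $\|\xi_{n_\ell}\|_\infty \le \widetilde C \|\varphi\|_\varpi$ uniformly in $n_\ell$ and $a$ by \eqref{eq.alphal1infty}, the product $\prod_\ell |\xi_{n_\ell}|$ is uniformly bounded, so the distortion replacement costs at most $C' |x_n(\omega)|^\alpha$ after dividing by $|x_n(\omega)|$.

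Finally, Lemma~\ref{l.switch} gives, for any $a_0 \in \omega \cap \Omega_*^\varphi$,
\[
  \frac{1}{|x_n(\omega)|} \int_{x_n(\omega)}
  \biggl| \prod_{\ell=1}^{\ell_0} \xi_{n_\ell}(x_n|_\omega^{-1}(y))
    - \prod_{\ell=1}^{\ell_0} \varphi_{a_0}(T_{a_0}^{n_\ell - n}(y)) \biggr| \, dy
  \le C \lambda_0^{-n}\, .
\]
Adding the two errors yields the claim with $C_3 := C + C'$. The main substantive input is Lemma~\ref{l.switch} (which is where real work happens); within the corollary itself, the only delicate point is checking that the hypothesis $|x_n(\omega)| \le n^{-3/\alpha}$ is strong enough to trigger the H\"older distortion bound \eqref{eq.distortion1}, which is precisely the purpose of the condition \eqref{inview} on $\alpha$ and $M_0$. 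Everything else is a clean change of variables plus two triangle-inequality estimates using the uniform sup-norm bound on the integrands.
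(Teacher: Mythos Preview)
Your proof is correct and follows essentially the same approach as the paper: change variables $y=x_n(a)$, verify that the hypothesis $|x_n(\omega)|\le n^{-3/\alpha}$ together with \eqref{inview} triggers the H\"older distortion bound \eqref{eq.distortion1}, use the uniform $L^\infty$ bound on the $\xi_{n_\ell}$ to control the distortion error, and then apply Lemma~\ref{l.switch} for the switching error. Your explicit invocation of the mean value theorem to produce $a^*$ is just a slightly more verbose way of writing the paper's identity \eqref{eq.switchdist}.
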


\begin{proof}
Since \eqref{inview} implies  \eqref{need} for $\omega$,
the change of variables $y = x_n (a)$ on $\omega$, combined with 
  the distortion estimate \eqref{eq.distortion1}, gives
   \begin{align}
     \nonumber
     \biggl| \frac1{|\omega|}
     & \int_{\omega } \prod_{\ell=1}^{\ell_0} \xi_{n_\ell}(a)\,
       da-\frac{1}{|x_n (\omega )|} \int_{x_n (\omega)}
       \prod_{\ell=1}^{\ell_0} \xi_{n_\ell}(x_n|_\omega^{-1} (y))
       \, dy \biggr|
     \\ 
     &= \frac{1}{|x_n (\omega)|} \biggl| \int_{x_n (\omega)}
       \prod_{\ell=1}^{\ell_0} \xi_{n_\ell}(x_n|_\omega^{-1} (y))
       \biggl( \frac 1{ |x_n'(x_n|_\omega^{-1} y)|} \frac{|x_n
       (\omega )|}{|\omega |}-1\biggr )   dy \biggr|
       \label{eq.switchdist}\\
     \nonumber
     &\leq C \frac{|x_n (\omega)|^\alpha}{|x_n (\omega )|}
       \int_{x_n (\omega)} \prod_{\ell=1}^{\ell_0}
       |\xi_{n_\ell}(x_n|_\omega^{-1}( y)) |\, dy \, .
  \end{align}
  Since $\sup_k\|\xi_k\|_{L^\infty}<\infty$, the claim then
  follows from Lemma~\ref{l.switch}.
\end{proof}

\begin{proof}[Proof of Lemma~\ref{l.switch}.]
  For $a_0\in \omega$ as in the statement, the functions
  \[
    \tilde \varphi_\ell(y) =\tilde \varphi_{\ell,a_0}(y) =
    \varphi_{a_0} (T_{a_0}^{n_\ell - n} (y)) \, , \qquad \tilde
    \xi_\ell (y)=\tilde \xi_{\ell,\omega} (y)= \xi_{n_\ell}
    (x_n|_\omega^{-1} (y))
  \]
  with
  \[
    \xi_{n_\ell} (x_n|_\omega^{-1} (y))
    =\varphi_{x_n|_\omega^{-1} (y)} (x_{n_\ell} (x_n|_\omega^{-1}
    (y))= \varphi_{x_n|_\omega^{-1} (y)}
    (T^{n_\ell+1}_{x_n|_\omega^{-1} (y)}(c))
  \]
  are bounded on $x_n(\omega \cap \Omega^\varphi_*)$.  Decomposing
  \begin{align}
    \nonumber  |\tilde \xi_1 \tilde \xi_2 \tilde \xi_3 \tilde
    \xi_4 - \tilde \varphi_1 \tilde \varphi_2 \tilde \varphi_3
    \tilde \varphi_4| \leq |(\tilde \xi_1
    &-\tilde \varphi_1) \tilde \xi_2 \tilde
      \xi_3 \tilde \xi_4| + |\tilde \varphi_1 (\tilde \xi_2 -
      \tilde
      \varphi_2) \tilde \xi_3 \tilde \xi_4| \\
    \label{noell0}
    &+ |\tilde \varphi_1 \tilde \varphi_2 (\tilde \xi_3 - \tilde
      \varphi_3) \tilde \xi_4| + |\tilde \varphi_1 \tilde \varphi_2
      \tilde \varphi_3 (\tilde \xi_4 - \tilde \varphi_4) | \, ,
  \end{align}
  it is enough to find a uniform constant $\bar C>1$ such that
  \[
    \frac{1}{|x_n (\omega)|} \int_{x_n (\omega)} |\tilde
    \xi_{\ell,\omega} - \tilde \varphi_{\ell,a_0}| \, dy \leq
    \bar C \lambda_0^{-n}\, ,\qquad \forall a_0 \in \omega \cap
    \Omega_*^\varphi \, , \, 1 \leq \ell \leq \ell_0 \, .
  \]
 We will do so by showing  the pointwise estimate
  \[
    |\tilde \xi_{\ell,\omega} (y) - \tilde \varphi_{\ell,a_0}
    (y)| \leq \bar C \lambda_0^{-n} \, , \qquad \forall y \in x_n
    (\omega)\, ,\,\forall a_0 \in \omega \cap \Omega_*^\varphi \,
    , \, 1 \leq \ell \leq \ell_0 \, .
  \]
  For $a= x_n|_\omega^{-1} (y)$, we decompose
  \begin{align}
    \nonumber
    \tilde \xi_{\ell,\omega} (y) &- \tilde \varphi_{\ell,a_0} (y) =
    \xi_{n_\ell} (a) - \varphi_{a_0} (T_{a_0}^{n_\ell - n} (x_n (a)))
    \\
    \nonumber
    &= \varphi_a(x_{n_\ell} (a)) - \varphi_{a_0} (T_{a_0}^{n_\ell -
      n} (x_n (a)))\\
    \label{rhs2}
    &= \varphi_a(x_{n_\ell} (a)) - \varphi_{a_0}
    (x_{n_\ell} (a)) + \varphi_{a_0} (x_{n_\ell} (a)) - \varphi_{a_0}
    (T_{a_0}^{n_\ell - n} (x_n (a))) \,.
  \end{align}
  Using Remark~\ref{Whitney}, there exists $C$, independent of
  $n_\ell$, such that
  \begin{equation}\label{thetagamma}
    |\varphi_{a} (x_{n_\ell} (a))- \varphi_{a_0} (x_{n_\ell} (a)) 
    | \leq C |\omega|^\theta \, , \qquad \forall \{a,
    a_0\}\subset \omega\, .
  \end{equation}
  Hence, using our choice \eqref{eq.lambda0} of
  $\lambda_0$, and since $|\omega| \leq C \lambda_{CE}^{-n}$
  by \eqref{small}, we get
  \begin{equation}
    \label{eq:xn-integral-part1}
    |\varphi_{a} (x_{n_\ell} (a))- \varphi_{a_0} (x_{n_\ell} (a)) |
    \leq C |\omega|^\theta \leq C \lambda_0^{-n} .
  \end{equation}
  For the last two terms in the right-hand side of \eqref{rhs2}
  note that since $a=x_n|_\omega^{-1} (y)$ implies
  $x_{n_\ell}(a) = T^{n_\ell + 1}_a(c) = T^{n_\ell-n}_a
  (T^{n+1}_a (c)) = T^{n_\ell-n}_a(y)$,
  we have, using \eqref{eq.xaa},
  \begin{multline}\label{??}
    \bigl| x_{n_\ell} (x_n|_\omega^{-1} (y)) - T_{a_0}^{n_\ell -
      n}(y) \bigr|
    =|T_a^{n_\ell - n} (y) - T_{a_0}^{n_\ell - n} (y)| \\
    \leq C \Lambda^{n_\ell - n} |a - a_0| \leq C \Lambda^{n_\ell
      - n}|\omega| \, , \qquad \forall y \in x_n(\omega)\, .
  \end{multline}
  Then, since $n_\ell-n\le \eta n$, our choice of $\lambda_0$,
  $\eta$, with \eqref{eq.alphal1infty} at $a=a_0$
  give\footnote{We do not need the analogue of
      Sublemma~5.4 from \cite{DS} here.}
  \begin{align}
    \label{eq:xn-integral-part2}
    |\varphi_{a_0} (x_{n_\ell} (a)) &- \varphi_{a_0}
    (T_{a_0}^{n_\ell - n} (x_n (a)))| 
\\    \nonumber  &
= \bigl| \varphi_{a_0}
    (x_{n_\ell} (x_n|_\omega^{-1} (y))) - \varphi_{a_0}
      (T_{a_0}^{n_\ell - n}(y)) \bigr| 
\\    \nonumber   &
 \leq  C \tilde C  
                  \Lambda^{  (n_\ell - n)\varpi}
                  |\omega|^{\varpi} 
                  \leq C \tilde C \Lambda^{
                  \varpi \eta n}
                  |\omega|^{\varpi}
                  \leq  C \tilde C \lambda_0^{-n} \, ,
  \end{align}
  using again in the last inequality that
  $|\omega| \leq C \lambda_{CE}^{-n}$ from \eqref{small}.  We
  conclude by combining \eqref{eq:xn-integral-part1} and
  \eqref{eq:xn-integral-part2} into \eqref{rhs2}.
\end{proof}

\subsection{Proof of Proposition~\ref{p.mainestimate}}\label{keysection}

We first show \eqref{eq.mainestimatelocal}.  Let
$\omega\subset \tilde\omega\in \PP_{[k-k^{1/4}]}$, with
$k\ge 2n/\eta$, be as in the assertion.  Writing
\[
  \int_{\omega} \biggl( \sum_{j=k}^{k+n-1}\xi_j \biggr)^2 \, dm =
  \sum_{j=k}^{k+n-1} \biggl( \int_{\omega}\xi_j^2 \, dm + 2
  \sum_{\ell=j+1}^{k+n-1} \int_{\omega }\xi_j \xi_\ell \, dm
  \biggr) \,,
\]
it is sufficient to show that
\begin{equation}
  \label{eq.firstterm}
  \sum_{j=k}^{k+n-1} \biggl| 1 - \frac1{|\omega|} \int_{\omega}
  \biggl( \xi_j^2 + 2 \sum_{\ell=j+1}^{k+n-1} \xi_j \xi_\ell \biggr)
  \, dm \biggr| =
  O(1) \, .
\end{equation}
Fix $a_0 \in \omega\cap \Omega_*^\varphi$.  By
Corollary~\ref{c.switch} for $\ell_0=2$, we have, for
$k\le j \le k+n-1$,
\begin{multline*}
  \frac1{|\omega|} \int_{\omega} \biggl( \xi_j^2 + 2
  \sum_{\ell=j+1}^{k+n-1} \xi_j \xi_\ell \biggr) \, dm \\
  = \frac1{|x_v (\omega)|} \int_{x_v (\omega)} \biggl(
  \varphi_{a_0}^2 \circ T_{a_0}^{j-v} + 2 \sum_{\ell=j+1}^{k+n-1}
  \varphi_{a_0} \circ T_{a_0}^{j-v} \varphi_{a_0} \circ
  T_{a_0}^{\ell - v} \biggr) \, dm \\
  + O \bigl( (k+n-j) (\lambda_0^{-(k-k^{1/4})} + |x_v
  (\omega)|^\alpha) \bigr) \,,
\end{multline*}
(recall $v=[k-k^{1/4}]$). Since $0<s<1/q<1$ we have that
  ${1}_{x_v(\omega)} \in H^s_q$, uniformly in $v$ and $\omega$
  (see \cite{Str}), so the first claim of
  Proposition~\ref{p.udc} gives
\begin{align*}
  \int_{x_v(\omega)} (\varphi_{a_0} \circ T_{a_0}^{j-v} &)
  (\varphi_{a_0} \circ T_{a_0}^{\ell-v} )\, dm 
  \\ & = |x_v(\omega)| \int \varphi_{a_0} \cdot
      (\varphi_{a_0} \circ T_{a_0}^{\ell-j}) \, d\mu_{a_0} +
      O(\rho^{j-v}) \, , \quad \forall \ell \geq j\, .
\end{align*}
Hence,
\begin{multline*}
  \frac1{|\omega|} \int_{\omega } \biggl( \xi_j^2 + 2
  \sum_{\ell=j+1}^{k+n-1} \xi_j \xi_\ell \biggr) \, dm = \int
  \biggl( \varphi_{a_0}^2 + 2 \sum_{\ell=k+1}^{k+n-1}
  \varphi_{a_0} \cdot (\varphi_{a_0} \circ T_{a_0}^{\ell-j}
  ) \biggr) \,
  d\mu_{a_0} \\
  + O \bigl( (k+n-j) (\lambda_0^{-(k-k^{1/4})} + |x_v
  (\omega)|^\alpha  + \rho^{j-v} |x_v (\omega)|^{-1})\bigr)
  \,.
\end{multline*}
By \eqref{eq.sigma} and \eqref{eq.varphinormalised}, we have
\[
  1 = \int \varphi_{a_0}^2 \, d \mu_{a_0} + 2 \sum_{i=1}^\infty
  \int \varphi_{a_0} \cdot \varphi_{a_0} \circ T_{a_0}^i \, d
  \mu_{a_0} \,.
\]
Therefore, the second claim of Proposition~\ref{p.udc}
gives
\[
  \int \biggl( \varphi_{a_0}^2 + 2 \sum_{\ell=j+1}^{k+n-1}
  \varphi_{a_0} \cdot (\varphi_{a_0} \circ T_{a_0}^{\ell-j})
  \biggr) \, d\mu_{a_0} = 1 + O (\rho^{k+n-j}) \, .
\]
Hence, we find, for $k \leq j \leq k + n - 1$ and $v = [k -
k^{1/4}]$,
\begin{multline*}
  \biggl| 1 - \frac1{|\omega|} \int_{\omega} \biggl( \xi_j^2 + 2
  \sum_{\ell=j+1}^{k+n-1} \xi_j \xi_\ell \biggr) \biggr| \\ \leq
  C (k+n-j) \bigl (\lambda_0^{-(k-k^{1/4})} + |x_v
  (\omega)|^\alpha + \rho^{j - v} |x_v (\omega)|^{-1} \bigr) + C
  \rho^{k+n-j}\,.
\end{multline*}
To proceed we shall use several times that
\[
  \sup_{n}\sup_k \sum_{j=k}^{k+n-1} \frac{1}{(k+n-j)^2} \le
  \sup_n \sum_{\ell=1}^n \frac 1 {\ell^2} <\infty\, .
\]
Clearly, $\rho^{k+n-j} \leq \frac{C}{(k+n-j)^2}$. For the term
$(k+n-j) \rho^{j-v} |x_v (\omega)|^{-1}$, we use
$|x_v (\omega)| \geq \lambda_0^{-k^{1/4}}$ and the definition
\eqref{eq.lambda0} of $\lambda_0$ to get, since $k\ge 2n/\eta$,
\begin{align*}
  \frac{\rho^{j - v} }{|x_v (\omega)|}
  & \leq \rho^{k^{1/4}} \lambda_0^{k^{1/4}} \leq 
    \lambda_0^{-k^{1/4}} \le \frac C {n^3}\leq
    \frac{C}{(k+n-j)^3} \, , \quad k\le j \le k+n-1\,.
\end{align*}
The term $(k+n-j) \lambda_0^{-(k-k^{1/4})}$ is similar.  Finally,
$|x_v (\omega)| \leq v^{-3/\alpha}$ gives
\[
  \sum_{j=k}^{k+n-1} (k + n - j) |x_v (\omega)|^\alpha \leq
  \sum_{j=k}^{k+n-1} \frac{k+n-j}{n^3}\leq n
  \frac{k+n-k}{n^3} = \frac{1}{n}\, .
\]
This proves \eqref{eq.firstterm}, and hence
\eqref{eq.mainestimatelocal}.

\smallskip

We will next deduce \eqref{eq.mainestimate} and
  \eqref{eq.extraestimate} from \eqref{eq.mainestimatelocal}.
Fix $\kappa_1>\kappa_0$, let $N_1(\kappa_1)\ge N_0$ be given by
Lemma~\ref{l.goodpartition}, and let $K\ge N_1$ be such that
$ k^{\kappa_1}\le \lambda_0^{k^{1/4}}$ for all $k \ge K$.  Then,
if $v = v(k) \ge K$ (so that $k\ge K$), we have
\begin{equation}\label{theproof}
  |x_v (\tilde \omega)| > v^{-\kappa_1} = [k - k^{1/4}]^{-\kappa_1}
  > \lambda_0^{-k^{1/4}}\, , \quad \forall \tilde \omega \in
  \mathcal{P}_v \, .
\end{equation}
Refining
$\mathcal{P}_v$ to a partition $\QQ_v$ such that
\[
  \lambda_0^{-k^{1/4}} \leq |x_v (\omega)| \leq v^{-3/\alpha} \,
  , \qquad \forall \omega \in \QQ_v\, ,
\]
we set  $\Omega_{*,v}^\QQ$ as in \eqref{newdef}
and we decompose
\begin{align*}
  |\Omega^\varphi_*| \cdot E \Biggl(\sum_{j=k}^{k+n-1} \xi_j
  \Biggr)^2
  &= \int_{\Omega^\QQ_{*,v}} \Biggl( \sum_{j=k}^{k+n-1} \xi_j
    \Biggr)^2 \, d m - \int_{\Omega^\QQ_{*,v}\setminus
    \Omega^\varphi_*} \Biggl( \sum_{j=k}^{k+n-1} \xi_j \Biggr)^2
    \, d m \\
  &= \int_{\Omega_v} \Biggl( \sum_{j=k}^{k+n-1} \xi_j \Biggr)^2
    \, d m - \int_{\Omega_v\setminus \Omega^\varphi_*} \Biggl(
    \sum_{j=k}^{k+n-1} \xi_j \Biggr)^2 \, d m
    \, .
\end{align*}
Then, using \eqref{lem:measureinomega},
$\sup_k \sup |\xi_k|<\infty$, $\kappa_0 \geq 3 / d_1$, and
$v(k)\ge k/2\ge n/\eta$, 
\begin{align}
  \nonumber
  0 \leq \int_{\Omega^\QQ_{*,v}\setminus \Omega^\varphi_*} \Biggl
  (\sum_{j=k}^{k+n-1} \xi_j \Biggr)^2\, d m
  & \leq \int_{\Omega_v\setminus
    \Omega^\varphi_*} \Biggl (\sum_{j=k}^{k+n-1} \xi_j \Biggr)^2 \,
    d m \\
  & \leq C n^2 e_v \leq C n^2
    n^{1 - d_1 \kappa_0} \leq C  \, ,
    \label{eq:where4isused}
\end{align}
which shows that
\begin{align*}
  \int_{\Omega_*^\varphi} \Biggl (\sum_{j=k}^{k+n-1} \xi_j
  \Biggr)^2 \, d m
  & = \int_{\Omega^\QQ_{*,v}} \Biggl( \sum_{j=k}^{k+n-1}
    \xi_j \Biggr)^2 \, d m + O(1) \\
  &= \int_{\Omega_v} \Biggl( \sum_{j=k}^{k+n-1} \xi_j \Biggr)^2
    \, d m + O(1) \, .
\end{align*}
By \eqref{eq.mainestimatelocal},
\begin{equation}
  \label{cla}
  \int_{\omega} \Biggl (\sum_{j=k}^{k+n-1} \xi_j  \Biggr)^2 \, d m
  = |\omega|(n + O(1)) \, , \qquad \forall \omega \in \QQ_v^*\, .
\end{equation}
Summing \eqref{cla} over $\omega \in \QQ^*_v$, we get that
\[
  \int_{\Omega^\QQ_{*,v}} \Biggl (\sum_{j=k}^{k+n-1} \xi_j
  \Biggr)^2 \, d m = n + O(1)\, .
\]
Finally, using again \eqref{lem:measureinomega} to see
\[
  0 \leq \frac{|\Omega^\QQ_{*,v}|}{|\Omega^\varphi_*|} - 1
  \leq \frac{|\Omega_v|}{|\Omega^\varphi_*|}-1 = O(e_v) =
  O(e_n)\, ,
\]
we have established \eqref{eq.mainestimate}, and
  also \eqref{eq.extraestimate} in the case
  $\Psi_k=(\sum_{j=k}^{k+n-1} \xi_j )^2$ (note that
  $|\Psi_k|\le C n^2 \le C k^2$).  For more general $\Psi_k$, the
  same argument, using $d_1 \kappa_0 \ge 11/3$ in
  \eqref{eq:where4isused}, gives \eqref{eq.extraestimate}.  This ends the proof of
  Proposition~\ref{p.mainestimate}.

\section{Proof of Theorem~\ref{t.asip} via Skorokhod's
  Representation Theorem}
\label{s.asip}
We will rearrange the Birkhoff sum as a sum of blocks of
polynomial size, approximate the blocks by a martingale, and
finally apply Skorokhod's representation theorem to this
martingale. The size for the $j$th block $\III_j$ is $j^{2/3}$,
which will give the error exponent $\gamma>2/5$ in our
ASIP.\footnote{\label{choiceb2}A block size $\#\III_j=j^b$
  replaces $3/5$ in \eqref{eq.MN} by $1/(1+b)$, so that the first
  constraint becomes $N^\gamma> N^{b/(1+b)}$, see
  \eqref{eq.lastterm}. Our bounds \eqref{eq.43}--\eqref{eq.43'}
  (with G\'al--Koksma and $M(N)\sim N^{1/(1+b)}$) give
  $N^{\gamma}> N^{(b+2)/(4(b+1))}$. Hence, $b=2/3$ is the
  optimum. In the iid case a block size $j^{1/2}$ gives
  $\gamma >1/3$ (\cite[p.~25]{ps}), see also the beginning of
  \cite[Sec. 6]{DS}.}

\subsection{Blocks $\III_M$. Approximations $\chi_i$ and $y_j$}
\label{ss.blocks}

Fix $a_*$, $\varpi\in (0,1)$, $q$, $s\in (0, \min \{\varpi, 1/q\})$, $\rho$, $\theta$, $\lambda_0$,
$\eta$, $\alpha$, $\varphi\in C^\varpi$,
$\Omega_*^\varphi=\Omega_*\cap[a_*-\epsilon_\varphi,
a_*+\epsilon_\varphi]$ as in the beginning of
Section~\ref{s.switch}.
Set 
\[
  \PP_{*,k}:=\{\omega \in \PP_{k} \mid |\omega\cap
  \Omega_*^\varphi|>0\} \, , \qquad \Omega_{*, k}:=
  \bigcup_{\omega \in \PP_{*, k}} \omega, \qquad k \ge 1\, .
\]

Fix $\gamma\in (2/5, 1/2)$ and\footnote{See Lemma~\ref{l.LLNY}
  for the condition $\delta<2(\gamma-2/5)$.}
$\delta \in (0,\min \{1/5, 2(\gamma-2/5)\})$.  For $i\ge 1$, we
shall approach
$\xi_i \colon [a_*-\epsilon_\varphi,a_*+\epsilon_\varphi] \to
\complex$ (see Remark~\ref{Whitney}) by the stepfunction
\[
  \chi_i\colon \Omega_{*, r_i} \to \complex \, , \qquad \chi_i =
  E(\xi_i | \FF_{r_i}) \, ,\,\,\,\, \mbox{ where }
  r_i=i+[i^{\delta}] \, ,
\]
with $\FF_k$ the $\sigma$-algebra generated by the intervals in
$\PP_{*,k}$. Conditional expectations are only
  defined almost everywhere, but we
may set (see \eqref{expect})
\begin{equation}\label{defchi}
  \chi_i|_{\omega} \equiv \frac{ \int_{\omega \cap
      \Omega_*^\varphi} \xi_i \, dm}{|\omega \cap
    \Omega_*^\varphi|} \, , \qquad  \forall \omega \in
  \PP_{*,r_i} \, , \, \forall  i \ge 1\, .
\end{equation}
Thus, $\chi_i$ is defined everywhere on
$\Omega_{*,r_i}$, allowing pointwise claims about it.

Recalling $e_\ell$ from \eqref{lem:measureinomega}  and our
assumption $\lambda_{CE}>e^{ 14 \alpha_{BC}}$ in the proof of Proposition~\ref{keyprop}, we have the
following basic lemma:
\begin{lemma}
  \label{l.chiapprox}
  For any 
$\tilde \lambda_{CE}\in (e^{\alpha_{BC}}, \sqrt {\lambda_{CE} } \cdot e^{-\alpha_{BC}})$, there exists $C$ such that
  \begin{equation}
    \label{eq.chia1}
    |\xi_i(a) - \chi_i(a)| \leq C \tilde \lambda_{CE}^{- 
      \theta i^{\delta}} \, , \qquad \forall i\ge 1\, ,\ \forall
    a \in \Omega_{*, r_i} \, ,
  \end{equation}
  and\footnote{The constant $C$ in \eqref{eq.chia2} goes to
    infinity as $\delta\to 0$, i.e.\ if $\gamma\to 2/5$.}  for
  all $i\ge1$, $j\ge0$ and all
    $a\in \Omega_{*,r_i}$
  \begin{equation}
    \label{eq.chia2}
    |E(\xi_{i+j} | \FF_{r_i} )(a)| = |E(\chi_{i+j} |
    \FF_{r_i})(a)| \leq
    C \min (1, 
    e_{ [\eta (j - 2 i^\delta)]}) \, .
  \end{equation}
\end{lemma}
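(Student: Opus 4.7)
\medskip

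\noindent\textbf{Proof plan for Lemma~\ref{l.chiapprox}.}

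\emph{Claim \eqref{eq.chia1}.} Fix $a\in\Omega_{*,r_i}$ and let $\omega\in\PP_{*,r_i}$ be the atom containing $a$. Since $\chi_i|_\omega$ is an average of $\xi_i$ over $\omega\cap\Omega_*^\varphi\subset\omega$, it suffices to prove $|\xi_i(a)-\xi_i(a')|\le C\lambda_{CE}^{-\theta i^\delta}$ uniformly for $a,a'\in\omega$. I would use the Whitney decomposition \eqref{decompose}: the $a$-dependent part is bounded by $C|a-a'|^\theta\le C|\omega|^\theta$, and the phase-space part by $\|\varphi_{a'}\|_\varpi |x_i(a)-x_i(a')|^\varpi\le C|x_i(\omega)|^\varpi$. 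Then I apply \eqref{small} at level $r_i$ to get $|\omega|\le C\lambda_{CE}^{-r_i}$ (far smaller than needed), and combine \eqref{eq.distortion3} with \eqref{eq:CE} to deduce $|x_i(\omega)|\le C\lambda_{CE}^{-(r_i-i)}|x_{r_i}(\omega)|\le C\lambda_{CE}^{-[i^\delta]}$. Since $\theta\le\varpi$, both terms are dominated by $C\lambda_{CE}^{-\theta i^\delta}$ for $i$ large (small $i$ absorbed by enlarging $C$).

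\emph{Claim \eqref{eq.chia2}, first equality.} Since $\chi_{i+j}=E(\xi_{i+j}|\FF_{r_{i+j}})$ and $\FF_{r_i}\subset\FF_{r_{i+j}}$, the tower property of conditional expectations immediately gives $E(\chi_{i+j}|\FF_{r_i})=E(\xi_{i+j}|\FF_{r_i})$.

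\emph{Claim \eqref{eq.chia2}, quantitative bound.} When $j\le 2i^\delta$ (or when $\eta(j-2i^\delta)$ is negative or bounded), the bound reduces to $\sup\|\xi_k\|_\infty\le C$. For $j>2i^\delta$, fix $\omega\in\PP_{*,r_i}$ containing $a$ and pick an intermediate level $m:=i+j-[\eta(j-2i^\delta)]$, chosen so that (i) $m\ge r_i$, (ii) $i+j-m\le \eta m$ (which makes Lemma~\ref{l.switch} applicable with $n=m$, $n_1=i+j$), and (iii) $m-r_i$ is bounded below by a fixed multiple of $\eta(j-2i^\delta)$, so that $e_{m-r_i}\le C\,e_{[\eta(j-2i^\delta)]}$. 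Since $\Omega_*^\varphi\subset\Omega_m$, the domain $\omega\cap\Omega_*^\varphi$ is the disjoint union over $\tilde\omega\in\PP_{*,m}$, $\tilde\omega\subset\omega$, of $\tilde\omega\cap\Omega_*^\varphi$, so I split
\[
  \int_{\omega\cap\Omega_*^\varphi}\xi_{i+j}\,dm = \sum_{\tilde\omega}\int_{\tilde\omega}\xi_{i+j}\,dm - \sum_{\tilde\omega}\int_{\tilde\omega\setminus\Omega_*^\varphi}\xi_{i+j}\,dm.
\]
The second sum is bounded in absolute value by $C\sum_{\tilde\omega}|\tilde\omega\setminus(\tilde\omega\cap\Omega_*)|\le Cd_0 e_{m-r_i}|\omega|\le Ce_{[\eta(j-2i^\delta)]}|\omega|$, directly by \eqref{lem:measureinomega1}. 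For each main term, fix $a_0\in\tilde\omega\cap\Omega_*^\varphi$; use Lemma~\ref{l.switch} with $\ell_0=1$ together with the change of variable $y=x_m(a)$ and the distortion estimate \eqref{eq.distortion1} (refining $\tilde\omega$ if needed so that \eqref{need} holds) to reduce to
\[
  \int_{\tilde\omega}\xi_{i+j}\,dm = \frac{|\tilde\omega|}{|x_m(\tilde\omega)|}\int_{x_m(\tilde\omega)}\varphi_{a_0}(T_{a_0}^{i+j-m}(y))\,dy + O\bigl(|\tilde\omega|(\lambda_0^{-m}+|x_m(\tilde\omega)|^\alpha)\bigr).
\]
Since $\int\varphi_{a_0}\,d\mu_{a_0}=0$ and $\|\mathbf{1}_{x_m(\tilde\omega)}\|_{H^s_q}$ is uniformly bounded (cf.\ \cite{Str}, as used in the proof of Proposition~\ref{p.mainestimate}), the first bound of Proposition~\ref{p.udc} gives $|\int_{x_m(\tilde\omega)}\varphi_{a_0}\circ T_{a_0}^{i+j-m}\,dy|\le C\rho^{i+j-m}$. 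Combined with $|\tilde\omega|/|x_m(\tilde\omega)|\le C\lambda_{CE}^{-m}$ (from transversality and CE), summation over $\tilde\omega$ yields a contribution that is exponentially small in $i+j-m=[\eta(j-2i^\delta)]$, and in particular $\le Ce_{[\eta(j-2i^\delta)]}|\omega|$. Finally, $|\omega\cap\Omega_*^\varphi|\ge c|\omega|$ (obtained by applying \eqref{lem:measureinomega1} with $\ell'=r_i$ at a suitably chosen larger $\ell$, or equivalently from the fact that $a_*$ is a density point together with the Markov nesting \eqref{Markov}) lets me divide to conclude.

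\emph{Main obstacle.} The delicate point is calibrating $m$: it must be far enough beyond $r_i$ for \eqref{lem:measureinomega1} to give a small exclusion, close enough to $i+j$ for Lemma~\ref{l.switch} to apply, and it must interact cleanly with the distortion hypothesis \eqref{need} for Corollary~\ref{c.switch} (possibly by further subdividing $\tilde\omega$ as in the proof of Proposition~\ref{p.mainestimate}). A subsidiary difficulty is ensuring that $|\omega\cap\Omega_*^\varphi|$ in the denominator is comparable to $|\omega|$ uniformly in $\omega\in\PP_{*,r_i}$; an escape hatch is to prove the bound first for $\int_{\omega\cap\Omega_*^\varphi}\xi_{i+j}\,dm/|\omega|$ and reinterpret the constant $C$ accordingly.
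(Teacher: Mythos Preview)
Your treatment of \eqref{eq.chia1} and of the equality in \eqref{eq.chia2} is correct and matches the paper's argument essentially verbatim.

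For the quantitative part of \eqref{eq.chia2} your overall architecture is right (intermediate level, split into a main sum over $\PP_{*,m}$ plus an exclusion term controlled by \eqref{lem:measureinomega1}, then switch and apply uniform decorrelation), but there is a genuine gap in the ``main term'' step. After Proposition~\ref{p.udc} you get
\[
\Bigl|\int_{x_m(\tilde\omega)}\varphi_{a_0}\circ T_{a_0}^{\,i+j-m}\,dy\Bigr|\le C\rho^{\,i+j-m},
\]
with \emph{no} factor $|x_m(\tilde\omega)|$ on the right (the $H^s_q$ norm of $1_{x_m(\tilde\omega)}$ is only uniformly bounded, not small). Hence, per $\tilde\omega$, the normalised contribution is $\le C\rho^{\,i+j-m}/|x_m(\tilde\omega)|$, and after summing over $\tilde\omega\subset\omega$ and dividing by $|\omega|$ one is left with at best $C\rho^{\,i+j-m}/\min_{\tilde\omega}|x_m(\tilde\omega)|$. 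Your bound $|\tilde\omega|/|x_m(\tilde\omega)|\le C\lambda_{CE}^{-m}$ does not help here: it controls each unnormalised summand, but you have no independent control on the number of $\tilde\omega$, and in fact $\sum_{\tilde\omega}|\tilde\omega|/|x_m(\tilde\omega)|$ need not be comparable to $|\omega|$. What is actually needed is the polynomial \emph{lower} bound $|x_m(\tilde\omega)|\ge m^{-\kappa_1}$ from Lemma~\ref{l.goodpartition} (no exceptionally small sets); the paper invokes it explicitly at this point to turn $C\rho^{\,i+j-m}/|x_m(\tilde\omega)|$ into $C\,m^{\kappa_1}\rho^{\,i+j-m}$, which is then absorbed by \eqref{largei}.

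Two smaller points. First, your explicit $m=i+j-[\eta(j-2i^\delta)]$ fails the hypothesis $i+j-m\le\eta m$ of Lemma~\ref{l.switch} once $j\gg i$; the paper takes $k=\max\{\,r_i+\eta(j-i^\delta),\,\lceil (i+j)/(1+\eta)\rceil\,\}$ precisely to guarantee $i+j\le k(1+\eta)$ in all regimes. Second, you invoke Corollary~\ref{c.switch} (hence the hypothesis \eqref{need}) and speak of refining $\tilde\omega$; this is unnecessary. The paper uses only the change of variable with the classical bounded distortion (i.e.\ \eqref{eq.distortion1} with $\alpha=0$), so that the only error beyond the correlation term is the switching error $C\lambda_0^{-m}$, and no subdivision of $\tilde\omega\in\PP_{*,m}$ is required.
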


Following \cite[Sec 3.3]{ps}, \cite[Sec 6.1]{DS}, we define
inductively consecutive blocks $\III_j$ of integers and
associated functions $y_j$: Let $\III_1 = \{1\}$, and let
$\III_j$ for $j\ge 2$ contain $[j^{2/3}]$ consecutive
integers. The first blocks are below:
\[
\underbrace{1,}_{\III_1}
\underbrace{2,}_{\III_2}
\underbrace{3, 4,}_{\III_3}
\underbrace{5, 6,}_{\III_4}
\underbrace{7, 8,}_{\III_5}
\underbrace{9, 10, 11,}_{\III_6}
\underbrace{12, 13, 14,}_{\III_7}
\underbrace{15, 16, 17, 18,}_{\III_8}
\underbrace{19, 20, 21, 22,}_{\III_9}
\ldots
\]

Let $M = M(N)$  be uniquely defined by $N \in \III_M$.
There exists $C$ such that
\begin{equation}
  \label{eq.MN}
  C^{-1} N^{3/5} \leq M(N) \leq C N^{3/5} \, , \qquad \forall N \geq
  1 \, .
\end{equation}
By \eqref{eq.chia1} in Lemma~\ref{l.chiapprox}, there is $C$ such
that, for all $i\ge 1$ and all $a \in \Omega_{*, r_i}$,
\begin{align}
  \label{eq.lastterm}
  \biggl| \sum_{i=1}^N\xi_i(a) - \sum_{j=1}^{M(N)}  \sum_{i \in
  \III_j} \chi_i (a) \biggr|
  & \le\sum_{i=1}^N|\xi_i(a)-\chi_i(a)|+C\# \III_M  \le
    C   N^{2/5}\,  ,
\end{align}
for all $N\ge1$. Hence, in order to prove Theorem~\ref{t.asip}, it is
sufficient to consider 
\[
  y_j \colon \Omega_{*, [Cr_j^{5/3}]} \to \complex\, , \qquad y_j
  := \sum_{i \in \III_j} \chi_i \, , \qquad j \ge 1\, .
\]

\begin{proof}[Proof of Lemma~\ref{l.chiapprox}]
 By \eqref{defchi}, since $\xi_i$ is continuous (see
  Remark~\ref{Whitney}), for any $\omega\in \PP_{*,r_i}$, there
  exists $a'\in \omega$ such that $\chi_i|_{\omega}=\xi_i(a')$.
  Revisiting the decomposition \eqref{decompose}, and using
  \eqref{eq.alphal1infty} and the $\theta$-H\"older continuity of
  $a\mapsto \varphi_a(u)$ (as for \eqref{thetagamma}), we find
  $C$ such that for all $i\ge 1$ and $\omega \in \PP_{*,r_i}$
  \begin{align*}
    |\xi_i(a)-\chi_i(a)|=   |\xi_i (a)
    - \xi_i (a')| 
    & \leq C ( |\omega|^\theta+  |x_i (\omega)|^\varpi  ) \leq C
      |x_i (\omega)|^{\theta }\, , \, \forall a \in \omega\, , 
  \end{align*}
  where we used $\theta \le \varpi$ and \eqref{small} in second
  inequality. This establishes \eqref{eq.chia1}, since for any $\bar \lambda_{CE}\in (e^{\alpha_{BC}}, \sqrt {\lambda_{CE} } \cdot e^{-\alpha_{BC}})$, there exists $\bar C$ such that
\begin{equation}  \label{eq.ri}
  |x_i(\omega)| \leq \bar C \cdot \bar \lambda_{CE}^{-i^\delta} \cdot i^{\kappa_0}
  \,, \qquad \forall 
 \omega \in \PP_{*,r_i} \, , \, \, \forall i \, .
\end{equation}
To show \eqref{eq.ri} first note, using \eqref{eq.distortion3}, that there exists $a\in \omega$ such that
\[
  |x_i(\omega)| \leq C
  \frac{|x_{r_i}(\omega)|}{|(T_a^{i^\delta})'(x_i(a))|} \, .
\]
Then, if $a\in \Omega_*$,  the
polynomial recurrence \eqref{eq:polapproach} and standard arguments give
\begin{equation}\label{err1}
|(T_a^{i^\delta})'(x_i(a))|\ge C i^{-\kappa_0} \bar
\lambda_{CE}^{-i^\delta}
\end{equation} (see e.g.\ \cite[Prop.~3.7]{BS1} in the exponentially recurrent case). If
$a\notin \Omega_*$, we may use bounded distortion
\eqref{eq.distortion1} ($\alpha=0$ suffices here) since
$|\omega\cap \Omega_*|>0$.

  The equality in \eqref{eq.chia2} follows from the definition
  since $\mathcal{F}_{r_i} \subset
    \mathcal{F}_{r_{i+j}}$.  Indeed, for
  $a \in \omega \in \PP_{*,r_i}$,
  \begin{align}
    \label{=4.4}
    |\omega \cap \Omega_*^\varphi|\cdot
    & |E(\xi_{i+j} | \FF_{r_i} )(a)|=
      \int_{\omega \cap \Omega_*^\varphi} \xi_{i+j} \, dm\\
    \nonumber
    & =\sum_{\substack{\omega' \in \PP_{*,r_{i+j}}\\
    \omega'\subset \omega}} 
    |\omega' \cap \Omega_*^\varphi| \cdot
    \frac{ \int_{\omega' \cap \Omega_*^\varphi} \xi_{i+j} \,
    dm}{|\omega' \cap \Omega_*^\varphi|}\\ 
    \nonumber
    &=\sum_{\substack{\omega' \in \PP_{*,r_{i+j}}\\
    \omega'\subset \omega}}|\omega' \cap \Omega_*^\varphi| \cdot
    \chi_{i+j}|_{\omega'} 
    =\sum_{\substack{\omega' \in \PP_{*,r_{i+j}}\\ \omega'\subset
    \omega}} \int_{\omega'\cap \Omega_*^\varphi} \chi_{i+j}\,
    dm\, . 
  \end{align}
  Since $\sup_k \|\xi_k\|_{L^\infty}<\infty$, we may and shall
  assume that $j \geq 2 i^{\delta}$ to prove the upper bound in
  \eqref{eq.chia2}.  For such $j$, recalling $\eta\in (0,1/2)$
  from \eqref{eq.eta}, define
  \begin{equation}\label{fot1}
    k=k(i,j)= \max \biggl\{ i + [i^\delta] + \eta (j - i^\delta),
    \biggl \lceil \frac{i+j}{1 + \eta}\biggr \rceil \biggr\}
  \end{equation}
  so that $k\le i+j-\frac \eta{1+\eta}(j-i^\delta) \le i+j$ and
  $i+j \le k(1+\eta)$.

  Since $\delta$ is fixed, we may and shall assume that $i$ is
 large enough such that $k(i,j)\ge N_1$ (with $N_1$ from
  Lemma~\ref{l.goodpartition}) and
  \begin{equation}\label{largei}
    \max\{\lambda_0^{-(j+i)/(1+\eta)},
\rho^{\eta j/3} \cdot   (2j)^{(\kappa_0+1)/\delta}   \}\le e_{[\eta(j-i^\delta)]} \, . 
  \end{equation}
  Since $k(i,j) \geq r_i$, we have, similarly as for
  \eqref{=4.4},
  \[
    |E(\xi_{i+j} | \FF_{r_i}) (a)| = |E(E(\xi_{i+j} |
    \mathcal{F}_{k(i,j)}) | \FF_{r_i})(a)|\, , \qquad \forall a
    \in \tilde \omega\in \PP_{*, r_i} \, .
  \]
  We must analyse the above decomposition more closely than in
  the proof of \cite[Lemma~6.1]{DS}: Let
  $a \in \tilde \omega\in \PP_{*, r_i}$, then
  \begin{multline}
    |\tilde \omega\cap \Omega_*^\varphi|
    \cdot |E(E(\xi_{i+j} |  \mathcal{F}_{k(i,j)}) |
      \FF_{r_i})(a)| = \biggl | \sum_{\substack{\omega \in
      \PP_{*,k(i,j)}\\
    \omega \subset \tilde \omega}} \frac {|\omega\cap
    \Omega^\varphi_*|} {|\omega\cap \Omega^\varphi_*|}
    \int_{\omega\cap \Omega_*^\varphi} \xi_{i+j} \, d m\biggr |
    \\
    \label{clever}
    \le
      \biggl |\sum_{\substack{\omega \in \PP_{*,k(i,j)}\\\omega
    \subset \tilde \omega}}
    \frac {|\omega|} {|\omega|} \int_{\omega} \xi_{i+j} \, d
    m\biggr | + \sup_{\tilde a} \|\varphi_{\tilde a}\|_{L^\infty}\cdot 
    \sum_{\substack{\omega\in \PP_{*,k(i,j)}\\ \omega \subset
    \tilde \omega}}  |\omega \setminus  (\omega\cap
    \Omega^\varphi_*)| \, .
  \end{multline}

  Since $\tilde \omega\in \PP_{*, r_i}$, the bound
  \eqref{lem:measureinomega1} implies
  \begin{equation}\label{lem:measureinomega2}
    \begin{cases}
      \frac{\sum_{\substack{\omega\in \PP_{*,k(i,j)}\\
            \omega\subset \tilde \omega}} |\omega \setminus
        (\omega\cap \Omega_*)| }{|\tilde \omega\cap
        \Omega_*^\varphi|} \le \frac{d_0 e_{k(i,j)-r_i} | \tilde
        \omega|} {(1-d_0 e_{r_i}) | \tilde\omega|}
      \le C d_0 e_{[\eta(j-i^\delta)]}\, , \\
      |\tilde \omega | /{|\tilde \omega\cap \Omega_*^\varphi|}
      \le \frac{ | \tilde \omega|} {(1-d_0 e_{r_i})|
        \tilde \omega|}\le C \, .
    \end{cases}
  \end{equation}
  In view of
  \eqref{lem:measureinomega1}, \eqref{lem:measureinomega2} and
  \eqref{clever}, it suffices to show
  \[
    \frac1{|\omega|} \biggl| \int_{\omega} \xi_{i+j} \, dm
    \biggr| \le C \min (1, e_{[\eta (j- 2 i^\delta)]}) \, ,
    \qquad \forall \omega \in \PP_{*,k(i,j)}\, .
  \]
  
  Fix $\omega \in \PP_{*,k(i,j)}$. First note that, by
  \eqref{eq.distortion1} for $\alpha=0$,
  \begin{equation}\label{111}
    \frac1{|\omega|} \biggl| \int_\omega \xi_{i+j}(a) \, da
    \biggr| \leq \frac{C}{|x_k(\omega)|} \biggl|
    \int_{x_k(\omega)} \xi_{i+j} (x_k|_\omega^{-1}(y)) \, dy
    \biggr|
    \, .
  \end{equation}
  Then, on the one hand, Lemma~\ref{l.switch} for $\ell_0=1$
  gives $a_0\in \omega\cap \Omega^\varphi_*$ such that
  \begin{align}\label{222}
    \frac1{|x_k(\omega)|} \biggl|
    \int_{x_k(\omega)} ( \xi_{i + j}
    (x_k|_\omega^{-1} (y) )
    & - \varphi_{a_0} (T_{a_0}^{i + j - k}   (y))) \, dy
      \biggr|\\
    & \leq C \lambda_0^{-k(i,j)}
      \nonumber \le  C \lambda_0^{-(i+j)/(1+\eta)} \, .
  \end{align}
  On the other hand, 
  recalling $0<s<1/q$, since $1_{x_k (\omega)}\in H^s_q$
  (uniformly in $k$ and $\omega$), the first claim of
  Proposition~\ref{p.udc}, with
  $\int \varphi_{a_0} d\mu_{a_0}=0$, gives\footnote{A factor
    $|x_k (\omega)|^{-1}$ was omitted when applying
    \cite[Prop.~4.3]{DS} on p.~400 of \cite{DS}: We fix
    this by using our polynomial lower bound on $|x_k(\omega)|$
    (considering two different values of $\delta$ should work for
    \cite{DS}).}
  \begin{align}
    \nonumber  
    \frac1{|x_{k}(\omega)|} \biggl|
    & \int_{x_{k}(\omega)} \varphi_{a_0}(T_{a_0}^{i+j-k}(y))
      \,dy \biggr| \leq C \cdot {k(i,j)^{(\kappa_0+1)}}
      \rho^{i+j-k(i,j)} \\
    \label{333}
    &\, \leq C \cdot (i+j)^{\kappa_0+1}
      \rho^{\eta(j-i^\delta)/(1+\eta)} \leq  C \cdot
      (2j)^{(\kappa_0+1)/\delta} \rho^{\eta
      j/(2+2\eta)} \, . 
  \end{align}
  (We  used $|x_{k}(\omega)| > Ck^{-\kappa_0+1}$ 
  from Lemma~\ref{l.goodpartition}.)  Putting together
  \eqref{111}, \eqref{222}, \eqref{333} and \eqref{largei}, we
  conclude the proof of \eqref{eq.chia2}.
\end{proof}

\subsection{Law of Large Numbers for $y_j^2$}\label{LLN}

Recall that $\gamma\in (2/5,1/2)$ is fixed. The main ingredient
in the proof of Theorem~\ref{t.asip} is the following analogue of
\cite[Lemma~6.2]{DS}, itself inspired by \cite[Lemma~3.3.1]{ps}:

\begin{lemma}
  \label{l.LLNy}
  For  $m_*$-a.e.\
  $a \in \Omega_*^\varphi$,
  there exists  $C(a)$ such that
  \begin{equation}
    \label{eq.yj2}
    \biggl| N - \sum_{j=1}^{M(N)}y_j^2(a) \biggr| \leq
    C(a) N^{2\gamma} \, , \qquad \forall N \geq 1\, .
  \end{equation}
\end{lemma}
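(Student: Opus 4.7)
\emph{Plan.} Write
\[
N - \sum_{j=1}^{M(N)} y_j^2(a) = \Bigl(N - \sum_{j=1}^{M(N)} E(y_j^2)\Bigr) - \sum_{j=1}^{M(N)} \bigl(y_j^2(a) - E(y_j^2)\bigr)
\]
and control the deterministic term by Proposition~\ref{p.mainestimate}, and the stochastic term for $m_*$-a.e.\ $a$ by a G\'al--Koksma strong law of large numbers (cf.\ \cite[Lemma~3.3.1]{ps}) applied to the centred variables $Z_j := y_j^2 - E(y_j^2)$.

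\emph{Deterministic term.} We apply \eqref{eq.mainestimate} with $k = \min \III_j$ and $n = \#\III_j = [j^{2/3}]$: the hypothesis $n \le \eta k/2$ holds for all large $j$ since $\min\III_j \sim (3/5) j^{5/3}$. Combining with the approximation $|\chi_i-\xi_i| = O(\lambda_{CE}^{-\theta i^\delta})$ from \eqref{eq.chia1}, whose total error over a block is stretched-exponentially small, we obtain $E(y_j^2) = \#\III_j + O(1)$. Summing over $1 \le j \le M(N)$ and using $N \in \III_{M(N)}$ with $M(N) \sim N^{3/5}$ and $\#\III_{M(N)} = O(N^{2/5})$,
\[
\Bigl| N - \sum_{j=1}^{M(N)} E(y_j^2) \Bigr| \le \#\III_{M(N)} + O(M(N)) = O(N^{3/5})\, ,
\]
which is $O(N^{2\gamma})$ since $\gamma > 2/5$.

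\emph{Stochastic term.} We seek a block $L^2$-estimate
\[
E\biggl(\sum_{j=M_0+1}^{M_0+L} Z_j\biggr)^{\!2} \le C \bigl(\Psi(M_0+L) - \Psi(M_0)\bigr)\, , \qquad \forall M_0\ge 0\, , \ \forall L \ge 1\, ,
\]
for a suitable non-decreasing $\Psi$, so that G\'al--Koksma yields $\sum_{j=1}^{M(N)} Z_j = O(\Psi(M(N))^{1/2}(\log \Psi(M(N)))^{3/2+\epsilon})$ $m_*$-a.s. The block size $\#\III_j = [j^{2/3}]$ is chosen so that the resulting rate matches $O(N^{2\gamma})$ for any $\gamma > 2/5$; the small $\epsilon$ produced by G\'al--Koksma is absorbed by taking $\delta < 2(\gamma - 2/5)$, which in turn governs the $\chi_i$-vs.-$\xi_i$ approximation error in \eqref{eq.chia1}. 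To establish the $L^2$ bound, we decompose
\[
E\biggl(\sum_{j=M_0+1}^{M_0+L} Z_j\biggr)^{\!2} = \sum_{j} \mathrm{Var}(y_j^2) + 2\sum_{j<j'} \mathrm{Cov}(y_j^2, y_{j'}^2)\, .
\]
For the diagonal, $E(y_j^4)$ is expanded as a quadruple sum $\sum_{i_1,\ldots,i_4 \in \III_j} E(\chi_{i_1}\chi_{i_2}\chi_{i_3}\chi_{i_4})$, processed by Corollary~\ref{c.switch} with $\ell_0 = 4$ on each atom $\omega\in \PP_{*, v(\min\III_j)}$ (mirroring the proof of \eqref{eq.mainestimatelocal}), and the resulting four-point phase-space correlator is handled by Proposition~\ref{p.udc} via a Wick-type pairing, yielding $E(y_j^4) = O((\#\III_j)^2) = O(j^{4/3})$. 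For the off-diagonal terms with $j<j'$, we condition on $\FF_{r_{\max\III_j}}$ and apply \eqref{eq.chia2} to $y_{j'}$: the conditional bias is polynomially small in the gap $\min \III_{j'} - \max \III_j$, so the off-diagonal sum is subdominant relative to the diagonal.

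\emph{Main obstacle.} The work-intensive step is the $\ell_0=4$ analysis of $E(y_j^4)$: one must (i) carry the parameter-to-phase-space switching of Corollary~\ref{c.switch} through the quadruple sum uniformly in the four indices ranging over $\III_j$ (this is precisely why Lemma~\ref{l.switch} was stated for all $\ell_0\in\{1,2,3,4\}$); (ii) absorb the excess $|\omega \setminus (\omega \cap \Omega_*^\varphi)|$ on each atom via \eqref{lem:measureinomega1}, exactly as in \eqref{lem:measureinomega2}; and (iii) extract the Gaussian-like leading term $\sim (\#\III_j)^2$ from the phase-space correlator, showing that the non-paired contractions decay exponentially in the spacing between indices thanks to the decay rate $\rho$ of Proposition~\ref{p.udc}.
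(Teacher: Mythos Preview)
Your overall plan---deterministic term via Proposition~\ref{p.mainestimate}, stochastic term via G\'al--Koksma applied to the centred variables $y_j^2-E(y_j^2)$---matches the paper's. The deterministic part is fine, and your fourth-moment sketch is in the right spirit (the paper works with $w_j=\sum_{i\in\III_j}\xi_i$ and handles $E(w_j^4)$ not by a Wick pairing but by separating quadruples with $\min\{v_2-v_1,v_4-v_3\}\ge j^\upsilon$, showing those are $O(e_{[Cj^{5/3}]})$ and counting the rest crudely; either route yields $E(w_j^4)=O(j^{4/3+2\upsilon})$).

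The genuine gap is your treatment of the off-diagonal covariances $\mathrm{Cov}(y_j^2,y_{j'}^2)$. You propose to condition on $\FF_{r_{\max\III_j}}$ and invoke \eqref{eq.chia2}. But \eqref{eq.chia2} bounds the conditional \emph{first} moment $|E(\chi_{i+\ell}\mid\FF_{r_i})|$; what you need is control of the conditional \emph{second} moment $E(y_{j'}^2\mid\FF)$ relative to $E(y_{j'}^2)$, and \eqref{eq.chia2} gives no such thing (the products $\chi_{i'}\chi_{i''}$ appearing in $y_{j'}^2$ are not centred, so their conditional expectations do not decay in the gap). Your assertion that ``the off-diagonal sum is subdominant relative to the diagonal'' is therefore unsupported---and in fact false at the level of the available bounds: in the paper's analysis the off-diagonal contributes at order $(m+n)^{8/3}$ (from $\sum_j\sum_{k>j}j^{2/3}$ and from the $k=j+1$ term $j^{5/3}$), which \emph{dominates} the diagonal $\sum_j j^{4/3+2\upsilon}$.

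The paper closes this gap using the \emph{local} estimate \eqref{eq.mainestimatelocal} of Proposition~\ref{p.mainestimate}, not \eqref{eq.chia2}. Since $y_j$ is constant on atoms of a refinement $\QQ_v$ of $\PP_v$ with $v=[k_0-k_0^{1/4}]$ and $k_0=\min\III_k$ (this uses $\delta<1/4$), one applies \eqref{eq.mainestimatelocal} on each such atom to get $|E(w_k^2\mid\omega)-\#\III_k|\le C$, whence $|E(y_j^2 w_k^2)-\#\III_k\,E(y_j^2)|\le C\,E(y_j^2)$ and then $|\mathrm{Cov}(y_j^2,w_k^2)|\le C\,j^{2/3}$ for $k\ge j+2$. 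This is precisely the step for which the local version \eqref{eq.mainestimatelocal} of the main estimate is indispensable; the first-moment bound \eqref{eq.chia2} plays no role in this lemma.
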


The proof of Lemma~\ref{l.LLNy} (which uses
Proposition~\ref{p.mainestimate} and \eqref{eq.chia1}, but not
\eqref{eq.chia2}) is based on the following theorem
(\cite{GalKoksma}, see also \cite[Theorem~A.1]{ps}).

\begin{theorem}[G\'{a}l--Koksma's Strong Law of Large Numbers]
  \label{t.galkoksma}
  Let $z_j$, $j\ge1$, be zero-mean random variables. Assume
  there exist  $p\ge1$ and $C<\infty$ with
  \[
    E \biggl( \sum_{j = m + 1}^{m + n} z_j \biggr)^2 \leq C((m +
    n)^p - m^p) \, , \quad \forall\ m \geq 0\ \text{and}\ n \geq
    1\,.
  \]
  Then for all $\iota>0$, we have
  $\frac1{n^{p/2 + \iota}} \sum_{j = 1}^n z_j \to 0$ almost
  surely.
\end{theorem}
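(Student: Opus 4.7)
The plan is to apply the G\'al--Koksma strong law (Theorem~\ref{t.galkoksma}) to the centred variables $\hat z_j := y_j^2 - E(y_j^2)$, after peeling off two deterministic errors. Set $K_M := \sum_{j=1}^M \#\III_j$; then $|N - K_{M(N)}| \le \#\III_{M(N)} = O(N^{2/5})$. Using \eqref{eq.chia1} (which makes $y_j$ super-polynomially close to $\hat y_j := \sum_{i\in \III_j}\xi_i$) and applying the bound \eqref{eq.mainestimate} of Proposition~\ref{p.mainestimate} to the block $\III_j$ --- valid once $j$ is large, since its starting index $k_j \sim j^{5/3}$ and length $\#\III_j = j^{2/3}$ easily satisfy $\#\III_j \le \eta k_j / 2$ --- yields $|E(y_j^2) - \#\III_j| = O(1)$. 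Hence $\bigl|\sum_{j=1}^{M(N)}(E(y_j^2) - \#\III_j)\bigr| = O(M(N)) = O(N^{3/5})$, and since $2\gamma > 4/5$, both deterministic errors are $\le CN^{2\gamma}$. It thus suffices to prove $\bigl|\sum_{j=1}^{M} \hat z_j\bigr| \le C(a)\, M^{10\gamma/3}$ almost surely.

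I would apply Theorem~\ref{t.galkoksma} with an exponent $p < 20\gamma/3$, so that $M^{p/2 + \iota} \sim N^{3p/10 + 3\iota/5} \le N^{2\gamma}$ for small $\iota$. Since $\gamma > 2/5$, any $p \le 7/3$ is comfortable, and the key estimate becomes
\[
E\biggl(\sum_{j=m+1}^{m+n} \hat z_j\biggr)^2 \le C\bigl((m+n)^{7/3} - m^{7/3}\bigr), \qquad \forall\, m \ge 0, \, n \ge 1.
\]
Expanding the square gives the diagonal $\sum_j \mathrm{Var}(y_j^2)$ and the cross-term $2\sum_{j < j'} \mathrm{Cov}(y_j^2, y_{j'}^2)$. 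For the diagonal I would replace $y_j$ by $\hat y_j$ via \eqref{eq.chia1} and then invoke Corollary~\ref{c.switch} with $\ell_0 = 4$ --- which switches the parameter integral $\int y_j^4 \, dm$ to phase space --- combined with the uniform H\"older decay of correlations of Proposition~\ref{p.udc}, applied iteratively to obtain $E(\hat y_j^4) = O\bigl((\#\III_j)^2\bigr) = O(j^{4/3})$, whence $\mathrm{Var}(y_j^2) \le C j^{4/3}$. For the off-diagonal terms, the same $\ell_0 = 4$ switching produces a four-fold phase-space integral in which the time indices coming from $\III_j$ and $\III_{j'}$ are separated by a gap of length $K_{j'-1} - K_j$, so that Proposition~\ref{p.udc} bounds $|\mathrm{Cov}(y_j^2, y_{j'}^2)|$ by $C j'^{4/3}\rho^{K_{j'-1} - K_j}$, summable in $j'$. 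Both contributions are $O\bigl((m+n)^{7/3} - m^{7/3}\bigr)$, and Theorem~\ref{t.galkoksma} delivers $\bigl|\sum_{j=1}^M \hat z_j\bigr| = o(M^{7/6+\iota})$ almost surely.

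The main obstacle is the fourth-moment bound, since Proposition~\ref{p.mainestimate} only controls the second moment of block sums. The $\ell_0 = 4$ analogue must be established by running the proof of Proposition~\ref{p.mainestimate} with Lemma~\ref{l.switch} and Corollary~\ref{c.switch} at $\ell_0 = 4$ (already stated in the excerpt for $\ell_0 \in \{1,2,3,4\}$), iterating Proposition~\ref{p.udc} to extract the fourth-order cumulant decay in the style of the Wick formula, while carefully discarding the mass of $\omega \setminus (\omega\cap\Omega_*^\varphi)$ via \eqref{lem:measureinomega}. The constraint $\delta < 2(\gamma - 2/5)$ enters precisely in balancing the polynomial cost $j^{O(\delta)}$ of the $\chi_i \leftrightarrow \xi_i$ comparison along blocks against the G\'al--Koksma exponent $10\gamma/3$ in the final accounting.
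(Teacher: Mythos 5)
Your proposal does not prove the statement in question. The statement is the G\'al--Koksma theorem itself: an abstract strong law for \emph{arbitrary} zero-mean random variables $z_j$ whose partial sums satisfy the second-moment bound $E(\sum_{j=m+1}^{m+n} z_j)^2 \le C((m+n)^p-m^p)$. What you have written is a sketch of the \emph{application} of that theorem in the paper (essentially Lemma~\ref{l.LLNy}, the law of large numbers for the block sums $y_j^2$), and your very first sentence --- ``apply Theorem~\ref{t.galkoksma} to the centred variables $\hat z_j$'' --- assumes the result you were asked to prove. This is circular with respect to the assigned statement, and nothing in your text addresses how the hypothesis $E(\sum_{j=m+1}^{m+n}z_j)^2\le C((m+n)^p-m^p)$ yields almost sure convergence of $n^{-p/2-\iota}\sum_{j\le n}z_j$. (The paper itself does not prove the theorem either; it cites \cite{GalKoksma} and \cite[Theorem~A.1]{ps}. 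A genuine proof runs via a Rademacher--Menshov-type dyadic chaining argument: decompose $\{1,\dots,2^k\}$ into dyadic blocks, use the hypothesis on each block to get a maximal inequality $E\bigl(\max_{n\le 2^k}|\sum_{j\le n}z_j|\bigr)^2\le Ck^2 2^{kp}$, then apply Chebyshev and Borel--Cantelli along $n=2^k$; the $\iota>0$ absorbs the logarithmic loss $k^2$ and an interpolation between consecutive dyadic scales finishes the argument. None of these ingredients appears in your proposal.)

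As a secondary remark, even read as a proof of Lemma~\ref{l.LLNy} your sketch has a quantitative gap: you claim the off-diagonal covariances $\mathrm{Cov}(y_j^2,y_{j'}^2)$ decay like $\rho^{K_{j'-1}-K_j}$ and are therefore summable in $j'$, which would permit $p=7/3$. The paper's bound \eqref{eq.43} for $k\ge j+2$ contains a term $Cj^{2/3}$ that does \emph{not} decay in the gap (it comes from conditioning $w_k^2$ on the partition $\QQ_v$ and using only the local second-moment estimate \eqref{eq.mainestimatelocal}, not exponential decorrelation of the squares). Summing $(m+n-j)j^{2/3}$ over $j$ produces $(m+n)^{8/3}-m^{8/3}$, which is why the paper takes $p=8/3$; your claimed $p=7/3$ is not justified by the available estimates.
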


\begin{proof}[Proof of Lemma~\ref{l.LLNy}]
  Set $w_j = \sum_{i \in \III_j} \xi_i$.  Since
  $y_j^2 - w_j^2 = (y_j + w_j) (y_j - w_j)$ and
  $|y_j + w_j|\leq C j^{2/3}$, the bound
  \eqref{eq.chia1} gives $C$ such that
  $|y_j^2 - w_j^2| \leq C j^{2/3}
  \tilde \lambda_{CE}^{-\theta j^\delta }$ for all $j\ge 1$ and
  $a \in \Omega_{*, r_{Cj^{5/3}}}$.  Hence,
  $\sup_{a\in \Omega_*^\varphi}\sum_{j \geq 1} |y_j^2 - w_j^2|$
  is finite, and it suffices to show \eqref{eq.yj2} with $y_j$
  replaced by $w_j$.

  By  \eqref{eq.mainestimate}
  we have
  $| E (w_j^2) - \# \III_j | \leq C$,
  and, since $\sum_{j=1}^{M(N)}   \# \III_j = N$, we get
  $ \bigl| \sum_{j=1}^{M(N)} E( w_j^2 )- N \bigr| \leq C M(N)$.
 Therefore, 
   \begin{equation}
     \label{eq:w2}
     \Biggl| N - \sum_{j=1}^{M(N)} w_j^2 \Biggr| \leq CM(N) + \Biggl|
     \sum_{j=1}^{M(N)} w_j^2 - E( w_j^2 )\Biggr|\,.
  \end{equation}
  
  Assume there exists $C$ such that
  \begin{equation}
    \label{eq.galkoksma}
    E \biggl( \sum_{j = m + 1}^{m + n} w_j^2 - E (w_j^2) \biggr)^2
    \leq C( (m + n)^{8/3 } -m^{8/3})\, , \quad
    \forall m \geq 0,\ n \geq 1\,.
  \end{equation}
  Then Theorem~\ref{t.galkoksma} (G\'al--Koksma) applied to
  $\iota\in(0, 10(\gamma-2/5)/3]$, $p=8/3$, and the zero-mean
  random variables $z_j=w_j^2-E( w_j^2)$, implies that
  \[
    \sum_{j = 1}^{M(N)} w_j^2 - E (w_j^2) = o (M^{\frac{4}{3} +
      \iota})\, , \qquad \mbox{almost surely}\, .
  \]
  Hence, \eqref{eq:w2} gives
  $ \bigl| N - \sum_{j=1}^{M(N)} w_j^2(a) \bigr| \leq C(a)
  N^{4/5 + 3\iota/5}\le C(a)N^{2\gamma} $, almost surely (recall
  $M(N) \sim N^{3/5}$ by \eqref{eq.MN}).
  It  remains to prove \eqref{eq.galkoksma}.
  
  By Jensen's inequality we have $(E(w_j^2))^2 \leq E(w_j^4)$ and
  therefore
  \begin{multline}
    \label{eq.galkoksmapre}
    E \biggl(\sum_{j = m + 1}^{m + n} w_j^2 - E (w_j^2) \biggr)^2
    \\ \leq 2 \sum_{j = m + 1}^{m + n} \biggl( E (w_j^4) +
    \sum_{k = j + 1}^{m + n} |E (w_j^2 w_k^2) - E (w_j^2) E
    (w_k^2)| \biggr) \, .
  \end{multline}
  We consider first $E(w_j^4)$. Fix $\upsilon \in (0,1/6)$ and,
  for $j\ge 1$,
  let
  \begin{multline*}
    S_j = \{\, \vec v \in \III_j^4 \mid v_1 \leq v_2 \leq v_3
    \leq v_4 \mbox{ and } \max\{v_2 - v_1 , v_4-v_3\}\ge
    j^\upsilon \,\}.
  \end{multline*}
  Then, since
  $\#(  \{\, \vec v \in \III_j^4 \mid v_1 \leq v_2 \leq v_3
    \leq v_4\} \setminus S_j)\le (j^{2/3 + \upsilon})^2 =
  j^{4/3+2\upsilon}$, we find
  \begin{align}
    \int\limits_{\Omega_*^\varphi}
    w_j(a)^4 \, da
    &= \sum_{\vec v \in \III_j} \biggl|
      \int_{\Omega_*^\varphi} \prod_{\ell = 1}^4 \xi_{v_\ell} (a)
      \, da \biggr| \nonumber  \leq C \sum_{\substack{\vec v \in
      \III_j^4 \\ v_1 \leq \ldots \leq v_4}} \biggl|
    \int_{\Omega_*^\varphi } \prod_{\ell=1}^4 \xi_{v_\ell} (a) \,
    da \biggr| \nonumber \\
    & \leq C \sum_{\vec v\in S_j} \biggl|
      \int_{\Omega_*^\varphi} \prod_{\ell=1}^4\xi_{v_\ell}(a) \,
      da \biggr| + C j^{4/3+2\upsilon} \, .
    \label{eq.SSS}
  \end{align}
  Let $\vec v\in S_j$ be such that $v_4-v_3 \geq j^\upsilon$.
  For $\omega \in \PP_{v_3}$ such that
  $\omega\cap \Omega_*^\varphi\ne \emptyset$, the change of
  variable in equation \eqref{eq.switchdist}, together with an
  easy variant of Lemma~\ref{l.switch} deduced from
  \eqref{noell0}, give $a_0\in \omega\cap \Omega_*^\varphi$ such
  that
  \begin{multline*}
    \frac{1}{|\omega|} \biggl| \int_\omega \prod_{\ell=1}^4
    \xi_{v_\ell}(a) \, da \biggr| \\ \leq \frac
    C{|x_{v_3}(\omega)|} \biggl| \int_{x_{v_3}(\omega)} \biggl(
    \prod_{\ell=1}^3 \xi_{v_\ell} ( x_{v_3}|_\omega^{-1} (y)) \biggr)
     \varphi_{a_0} (T_{a_0}^{v_4 - v_3} (y) ) \, dy \biggr|
    + C \lambda_0^{-v_3} \, .
  \end{multline*}
  For $y\in x_{v_3}(\omega)$, setting $a = x_{v_3}|_\omega^{-1} (y)$,
  and recalling Remark~\ref{Whitney}, we find
  \begin{align*}
    |\xi_{v_\ell} (x_{v_3}|_\omega^{-1} (y))
    & - \varphi_{a_0} (x_{v_\ell} \circ x_{v_3}|_\omega^{-1}(y))|
    \\ 
    & = |\varphi_a  (x_{v_\ell} \circ x_{v_3}|_\omega^{-1}(y)) -
      \varphi_{a_0} (x_{v_\ell} \circ x_{v_3}|_\omega^{-1}(y))|
      \leq C |\omega|^\theta\, ,
  \end{align*}
  for $\ell=1, 2, 3$. Thus, \eqref{lem:measureinomega} and
  \eqref{small} imply (using $\sup_k\|\xi_k\|_{L^\infty}<\infty$)
  \begin{multline}
    \label{eq.xxxx}
    \biggl| \int_{\Omega^\varphi_*} \prod_{\ell=1}^4 \xi_{v_\ell}
    (a) \, da \biggr| \leq C e_{v_3}+\sum_{\omega \in
      \PP_{*,v_3}} | \omega |\biggl[ \frac
    {C\lambda_{CE}^{-v_3\theta}}{|x_{v_3}(\omega)|} + C
    \lambda_0^{-v_3} \biggr]\\
    +\sum_{\omega \in \PP_{*,v_3}} | \omega | \frac
    C{|x_{v_3}(\omega)|} \Bigl| \int_{x_{v_3}(\omega)} \Bigl(
    \prod_{\ell=1}^3 \varphi_{a_0}(x_{v_\ell} \circ(
    x_{v_3}|_\omega^{-1})(y)) \Bigr)
    \varphi_{a_0}(T_{a_0}^{v_4-v_3}(y)) \, dy \Bigr| \,.
  \end{multline}
We claim that, for $\ell=1,2,3$, and 
for each $\omega\in \PP_{*, v_3}$,
  \begin{equation}\label{err2}
    |\partial_y(x_{v_\ell} \circ( x_{v_3}|_\omega^{-1}))(y)| \leq
    C v_{\ell}^{\kappa_0}\, , \quad \forall y \in
    x_{v_3}(\omega) \, .
  \end{equation}
Indeed, by \eqref{eq.distortion3}, there exists $a \in \omega$ such that
  \[
    |\partial_y (x_{v_\ell} \circ( x_{v_3}|_\omega^{-1}))(y)|\le
    C |(T_a^{v_3-v_\ell})'(T_a^{v_\ell+1}(c))|^{-1} \, .
  \]
  Thus, if $a\in \Omega_*$, standard arguments (see e.g.\
  \cite[Prop.~3.7]{BS1}, using our polynomial recurrence
  \eqref{eq:polapproach}) give the claim.  Otherwise, since
  $|\omega\cap \Omega_*|>0$, we may use \eqref{eq.distortion1} as
  for \eqref{eq.ri}.
 
 Therefore, we find $C$ such that for each $v_3$ and $\omega\in \PP_{*, v_3}$,
  \[
    \| 1_{x_{v_3} (\omega)} \cdot \prod_{\ell=1}^3(\varphi_{a_0} \circ
    x_{v_\ell} \circ x_{v_3}|_\omega^{-1}) \|_{H^s_q}
    \leq C (v_1 v_2)^{\varpi \kappa_0}\| \varphi_{a_0} \|^2_{C^\varpi} \| \varphi_{a_0} \|_{H^s_q} \, .
  \]
  Indeed, on the one hand,  
there exists $C$ such that, for any $C^2$ map $\TT$, we have
\[
\|\varphi_{a_0}\circ \TT \|_{C^\varpi}\le
C \sup|\TT'|^\varpi \|\varphi_{a_0}\|_{C^\varpi}
\, .
\]
On the other hand, since $0<s<1/q<1$, the characteristic function of an interval is
a  bounded multiplier on $H^s_q(I)$ (uniformly in the size of the
  interval), and since $s<\varpi$, a function in $C^\varpi$  is a bounded
multiplier on $H^s_q(I)$ (\cite{Str, Tho}).

  Hence, by the first claim of Proposition~\ref{p.udc}
  (with \eqref{eq.alphal1infty} and
  $\int \varphi_{a_0} d\mu_{a_0}=0$), we have
  \begin{align*}
    \frac{\bigl| \int_{x_{v_3}(\omega)} \bigl( \prod_{\ell=1}^3
      \varphi_{a_0}(x_{v_\ell} \circ x_{v_3}|_\omega^{-1}) \bigr)
      \varphi_{a_0}(T_{a_0}^{v_4-v_3}) \, dy
      \bigr|}{|x_{v_3}(\omega)|} 
&\le C (v_1 v_2)^{\varpi \kappa_0}
    \frac{\rho^{v_4-v_3}}{|x_{v_3}(\omega)|}\\
&\le C j^{10\varpi \kappa_0/3}
    v_3^{\kappa_1}\rho^{j^\upsilon} \, .
  \end{align*}
  (We used Lemma~\ref{l.goodpartition} and that $v_\ell\in \III_j$,
  implies $v_\ell \leq C j^{5/3}$,).  Next,
  \[
    \biggl| \int_{\Omega^\varphi_*} \prod_{\ell=1}^4 \xi_{v_\ell}
    \, da \biggr| \leq C (e_{[Cj^{5/3}]} +
  j^{10\varpi \kappa_0/3}  j^{5 \kappa_1 /3}\rho^{j^\upsilon} +
    j^{5 \kappa_1 /3}\lambda_0^{j^{5/3}}) \leq C e_{[Cj^{5/3}]}\, ,
  \]
  for all $\vec v\in S_j$ with $v_4-v_3\ge j^\upsilon$ (if $j$ is
  large enough).

  \smallskip

  Let now $\vec v\in S_j$ with $v_2-v_1\ge j^\upsilon$. Then
  applying directly Lemma~\ref{l.switch} with $\ell_0=4$, a
  similar reasoning gives  $|\int_{\Omega_*^\varphi}\prod_{\ell=1}^4\xi_{v_\ell}
    \, da|\le C e_{[Cj^{5/3}]}$.

  Finally, since $\# S_j\le \# \III_j^4 \le j^{8/3}$ and
  $e_j\le j^{-d_1 \kappa_0+1}$ with $d_1 \kappa_0\ge 3>9/5$, the
  bound \eqref{eq.SSS} gives $C$ such
  that\footnote{For the purposes of the present
      lemma, a version of \eqref{eq.xi4total} with $Cj^{5/3}$ in
      the right-hand side would suffice.  The stronger statement
      is needed for \eqref{forkappa}.}
  \begin{equation}
    \label{eq.xi4total}
    E(w_j^4) \le C (j^{8/3}e_{[Cj^{5/3}]}+
    j^{4/3 + 2\upsilon})
    \le C j^{4/3+2\upsilon} \, , \qquad\forall j\geq 1\,.
  \end{equation}

  \medskip

  We next bound    $|E (w_j^2 w_k^2) - E (w_j^2) E(w_k^2)|$
  for $k\ge j+1$. If $k = j + 1$, by
  Cauchy's inequality and \eqref{eq.xi4total},
  \[
    E (w_j^2 w_{j+1}^2) \leq \sqrt{ E (w_j^4) E (w_{j+1}^4)} \leq
    C j^{5/3}\, .
  \]
  By \eqref{eq.mainestimate}
we have
  $  E (w_{j}^2) E( w_{j + 1}^2) \leq C j^{4/3}$.
  Hence
  \begin{equation}
    \label{eq:jj+1}
    |E (w_j^2 w_{j + 1}^2) - E (w_j^2) E( w_{j + 1}^2)| \leq C
    j^{5/3 }\, .
  \end{equation}
  Assume now that $k \geq j+2$. By construction, $y_j$ is
  constant on elements of $\PP_v$ if
  $ v \geq r_{j_1}=j_1 + [j_1^\delta] $, where $j_1$ is the
  largest number in $\III_j$.  Let 
  \[
    k_0=k_0(k):=\min \III_{k} \ge \frac{k^{5/3}}{C}\, .
  \]
  Then, for large enough $j$,
  using that $x\mapsto x-x^{1/4}$ is increasing for large $x$, we
  find
  \[
    k_0 - k_0^{1/4} \geq j_1 + \#\III_{j+1} - (j_1 + \#\III_{j+1})^{1/4} \geq j_1 + 2 j_1^{2/3} - 2 j_1^{1/4} \geq
    j_1 + j_1^{1/4}\, .
  \]
  Since $k\ge j+2$ and $\delta < \frac{1}{4}$, we have that $y_j$ is constant on elements of $\PP_v$ for
  \[
    v=v(k_0)=[k_0-k_0^{1/4}]\, .
  \]
   
  Lemma~\ref{l.goodpartition} gives
  $|x_{v (k_0)}(\omega)| \geq \lambda_0^{-{k_0}^{1/4}}$ if
  $\omega \in \PP_{v(k_0)}$.  Thus, there exists a refinement
  $\QQ_{v(k_0)}$ of $\PP_{v(k_0)}$ such that,
  \[
    \lambda_0^{-{k_0}^{1/4}} \leq |x_{v(k_0)}(\omega)| \leq
    v(k_0)^{-3/\alpha}=
      [k_0-k_0^{1/4}]^{-3/\alpha}\, , \quad \forall \omega \in
    \QQ_{v(k_0)} \, .
  \]
  Therefore, for large enough $k$, the local
  bound~\eqref{eq.mainestimatelocal} in
  Proposition~\ref{p.mainestimate} gives
    for all $\omega \in \QQ_{v}$ with non-empty intersection
    with $\Omega_*^\varphi$ that
    \[
      \biggl| \frac{1}{|\omega|} \int_\omega w_k^2 \, dm -\# \III_k
      \biggr| \leq C \, ,
    \]
    since $n=\# \III_k=[k^{2/3}]\le \eta k_0/2$. 
As in
    \eqref{newdef}, we write $\QQ_{*,v}$ for the set of
    $\omega \in \QQ_{v}$ with nonempty intersection with
    $\Omega_*^\varphi$, and $\Omega_{*,v}^\QQ = \cup \QQ_{*,v}$.  
  Thus, using that $y_j$ is constant on
  each $\om \in\QQ_v$ (since $\QQ_{v}$ refines $\PP_{v}$),
  \begin{align*}
    \int_{\Omega_{*,v}^\QQ} (y_j^2 w_k^2) \, d m 
    &= \sum_{\omega \in \mathcal{Q}_{v}^*} |\omega| \cdot
      y_j^2|_\omega \cdot  \frac{1}{|\omega|}
      \int_\omega w_k^2\, dm\\
    & \in \biggl[ \int_{\Omega_{*,v}^\QQ} y_j^2 \,
      dm (\# \III_k - C),
      \int_{\Omega_{*,v}^\QQ} y_j^2 \, dm
      (\# \III_k + C) \biggr] \, .
  \end{align*}
  Recall that $j\le k-2$. Since
    $|y_j^2| \le Cj^{4/3}\le Ck^{4/3}$, we get
    \[
      \frac 1 {m(\Omega_{*,v}^\QQ)}\int_{\Omega_{*,v}^\QQ} y_j^2
      \, d m = E(y_j^2) + O(1)\, ,
    \]
    by \eqref{eq.extraestimate} applied to $\Psi_k=y_j^2$, and
    since $|y_j^2w_k^2| \le  C k^{8/3}$, we have
    \[
      \frac 1
      {m(\Omega_{*,v}^\QQ)}\int_{\Omega_{*,v}^\QQ}
      (y_j^2 w_k^2) \, d m = E(y_j^2 w_k^2) + O(1)\, ,
    \]
    by \eqref{eq.extraestimate} applied to $\Psi_k=(y_j^2
    w_k^2)$.
  That is,
  \[
    |E (y_j^2 w_k^2) - \#\III_k E (y_j^2)| \leq  C
      (E (y_j^2) + 1) \, .
  \]
  Next, the global estimate \eqref{eq.mainestimate} in
  Proposition~\ref{p.mainestimate} gives
  $|E (y_j^2) E (w_k^2) - \# \III_k E (y_j^2)| \leq C E
  (y_j^2)$. Therefore\footnote{The expression
    $\#\III_j=j^b=j^{2/3}$ in the right-hand side already leads
    to $\gamma>2/5$. See Footnote~\ref{choiceb2}.}
  \begin{equation*}
    |E (y_j^2 w_k^2) - E (y_j^2 )E (w_k^2)| \leq 
      C (2E( y_j^2) + 1) 
    )\leq C \# \III_j  \, .
  \end{equation*}
  Hence, for large enough $j$ and all $k\ge j+2$, since
  $\sup|w_j+y_j|\le C \# \III_{j}$,
  \begin{align}
    \nonumber
    & |E(w_j^2w_k^2)-E(w_j^2)E(w_k^2)| \le |E (y_j^2 w_k^2) - E
      (y_j^2 )E (w_k^2)| \\
    \nonumber
    & \qquad \qquad + |E (y_j^2 w_k^2) - E    (w_j^2
      w_k^2)|  +   |E (w_j^2) E (w_k^2) - E( y_j^2) E
      (w_k^2)|  \\
    \nonumber
    & \leq C  \# \III_j + C  E(w_k^2)\sup|w_j - y_j| \cdot \sup
      |w_j + y_j|  \\ 
    \label{eq.43}
    & \leq Cj^{2/3} + C k^{2/3}j^{2/3}
      \tilde \lambda_{CE}^{-\theta j^{5\delta/3}}  \,.
  \end{align}
  (We used \eqref{eq.chia1} to get
  $\sup |w_j - y_j|\le C \# \III_j \tilde \lambda_{CE}^{-\theta
    j^{5\delta/3}}$.)

  Finally, we plug \eqref{eq.43}, \eqref{eq:jj+1},
  \eqref{eq.xi4total} into \eqref{eq.galkoksmapre}, and get,
  since $2\upsilon<1/3$,
  \begin{align}
    \nonumber
    E \biggl(&\sum_{j = m + 1}^{m + n} w_j^2 - E (w_j^2)
               \biggr)^2 \\
    \nonumber
             & \leq C \sum_{k = m + 3}^{m + n}k^{2/3}
               \sum_{j=m+1}^{\infty} j^{2/3}
               \tilde \lambda_{CE}^{-\theta j^{5\delta/3}}+
               C \sum_{j = m + 1}^{m + n} \biggl(
               j^{5/3 } 
               + \sum_{k = j + 2}^{m + n} j^{2/3}\biggr)  \\
             & \leq C   \Biggl ((m+n)^{5/3} -
               m^{5/3}+ \sum_{j = m + 1}^{m + n} \biggl(
               j^{5/3 } + (m+n-j) j^{2/3}  \biggr )    \Biggr )  
               \label{eq.43'} 
               \, . 
  \end{align}
  This proves \eqref{eq.galkoksma}.
\end{proof}

\subsection[Martingale Differences $Y_j$ --- Skorokhod's
  Theorem]{Martingale Differences $Y_j$. Skorokhod's
  Representation Theorem}
\label{last}

As in Schnellmann's adaptation of \cite[Section~3.4--3.5]{ps} in
\cite[Section~6.3]{DS}, let $\LL_j$ be the $\sigma$-algebra
generated by $\{y_\ell\}_{1\le \ell \le j}$, and set
\begin{align}\label{defYj}
  u_j &= \sum_{k \geq 0} E (y_{j + k} \mid \LL_{j - 1}) \, ,
        \qquad Y_j = y_j + u_{j + 1} - u_j \, , \qquad j\ge 2 \,
        . 
\end{align}
Then $\{Y_j,\LL_j\}$ is a martingale difference sequence. Using
\eqref{eq.chia2}, we show that $\{Y_j\}$ inherits the law of
large numbers established for $\{y_j\}$ in Lemma~\ref{l.LLNy}:

\begin{lemma}
  \label{l.LLNY}
  For  $m_*$-a.e.\
  $a \in \Omega_*^\varphi$,
  there exists $C(a)$ such that
  \begin{equation}
    \label{eq.Yj2}
    \Biggl| N -\sum_{j=1}^{M(N)} Y_j^2 (a) \Biggr| \leq
    C(a)N^{2\gamma} \, , \qquad \forall N \geq 1\, , 
  \end{equation}
 and
  \begin{equation}
    \label{eq.Rj2}
    \Biggl| \sum_{j = 1}^{M(N)} E (Y_j^2 \mid \LL_{j - 1}) - Y_j^2 (a)
    \Biggr| \leq C(a) N^{2 \gamma} \, , \qquad \forall N \geq 1\, .
  \end{equation}
\end{lemma}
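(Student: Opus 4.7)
The plan is to follow \cite[Lemma~6.3]{DS}, adapting to the present setting where the decay rate $e_j$ is polynomial rather than exponential. First I would verify that $\{Y_j, \LL_j\}$ is a martingale difference sequence: $u_j$ is $\LL_{j-1}$-measurable, and the tower identity gives $u_j = E(y_j | \LL_{j-1}) + E(u_{j+1} | \LL_{j-1})$, so $E(Y_j | \LL_{j-1}) = E(y_j | \LL_{j-1}) + E(u_{j+1} | \LL_{j-1}) - u_j = 0$. The two bounds \eqref{eq.Yj2} and \eqref{eq.Rj2} are then derived independently, using different features of this martingale structure.

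The quantitative input for \eqref{eq.Yj2} comes from \eqref{eq.chia2}. Since each $\chi_\ell$ is $\FF_{r_\ell}$-measurable, we have $\LL_{j-1} \subseteq \FF_{r_{N_{j-1}}}$ (with $N_{j-1}$ the top index of $\III_{j-1}$, so $N_{j-1} \sim j^{5/3}$), and the tower property gives, for any $i > N_{j-1}$,
\[
|E(\chi_i | \LL_{j-1})| \leq \|E(\xi_i | \FF_{r_{N_{j-1}}})\|_\infty \leq C \min(1, e_{[\eta(i - N_{j-1} - 2 N_{j-1}^\delta)]}) \, .
\]
Summing over $i \in \III_{j+k}$ and $k \geq 0$, and using $d_1 \kappa_0 \geq 3$ together with the tail bound $e_\ell \leq C \ell^{1 - d_1 \kappa_0}$, one obtains polynomial control on $u_j$ and on $u_{j+1} - u_j$. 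Writing $Y_j^2 - y_j^2 = 2 y_j (u_{j+1} - u_j) + (u_{j+1} - u_j)^2$, I would then estimate each piece: for the cross term, either Abel summation (as in \cite[Lemma~6.3]{DS}) or a direct $L^2$ bound combined with Borel--Cantelli absorbs it into $O(N^{2\gamma})$ almost surely. Combined with Lemma~\ref{l.LLNy}, this gives \eqref{eq.Yj2}. The constraint $\delta < 2(\gamma - 2/5)$ enters precisely here: the $O(j^{5\delta/3})$ ``edge'' indices $i \in \III_j$ with $i - N_{j-1} \leq 2 N_{j-1}^\delta$ admit only the trivial bound $1$ in \eqref{eq.chia2}, and their contribution must remain compatible with the target exponent $2\gamma$.

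For \eqref{eq.Rj2}, the key observation is that $\{Y_j^2 - E(Y_j^2 | \LL_{j-1}), \LL_j\}$ is itself a martingale difference sequence, so cross terms vanish in the $L^2$ expansion:
\[
E\biggl( \sum_{j = m+1}^{m+n} (Y_j^2 - E(Y_j^2 | \LL_{j-1})) \biggr)^2 \leq \sum_{j = m+1}^{m+n} E(Y_j^4) \, .
\]
Since $Y_j = y_j + u_{j+1} - u_j$ and $|u_{j+1} - u_j|$ is polynomially bounded, we have $E(Y_j^4) \leq C E(y_j^4) + C \leq C j^{4/3 + 2\upsilon}$ for $\upsilon$ small, by \eqref{eq.xi4total}. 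Theorem~\ref{t.galkoksma} (G\'al--Koksma) applied with $p = 7/3 + 2\upsilon$ to the martingale difference $Y_j^2 - E(Y_j^2 | \LL_{j-1})$ then yields $\sum_{j=1}^{M(N)} (Y_j^2 - E(Y_j^2 | \LL_{j-1})) = o(M^{7/6 + \upsilon + \iota}) = o(N^{7/10 + 3(\upsilon + \iota)/5})$ almost surely for any $\iota > 0$, which is $O(N^{2\gamma})$ for $\upsilon, \iota$ small enough, since $\gamma > 2/5 > 7/20$.

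The main obstacle is the bound for \eqref{eq.Yj2}: the estimate in \eqref{eq.chia2} degenerates on the ``edge'' indices close to $N_{j-1}$, where only the trivial bound $1$ is available, so the naive pointwise estimate on $u_j$ is too crude. Absorbing the cross term $\sum_j 2 y_j (u_{j+1} - u_j)$ into $O(N^{2\gamma})$ therefore requires either martingale cancellation in $L^2$ or a delicate Abel summation, parallel to \cite[proof of Lemma~6.3]{DS}, and this step is the one that pins down the constraint $\delta < 2(\gamma - 2/5)$.
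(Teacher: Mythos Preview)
Your proposal is correct and follows the same overall architecture as the paper, but you overcomplicate the cross-term estimate in \eqref{eq.Yj2} and slightly diverge in the tool used for \eqref{eq.Rj2}.

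For \eqref{eq.Yj2}, you flag the cross term $\sum_j 2y_j(u_{j+1}-u_j)$ as the ``main obstacle'' requiring Abel summation or Borel--Cantelli, and you worry that the naive pointwise bound on $u_j$ is too crude. In fact the naive bound is exactly what the paper uses, and it suffices via a one-line Cauchy--Schwarz. From \eqref{eq.chia2} one gets the pointwise estimate $|u_\ell|\le C\ell^{5\delta/3}$ (the ``edge'' indices you identify are precisely what produce this exponent), hence $|v_j|=|u_j-u_{j+1}|\le Cj^{5\delta/3}$ and $\sum_{j\le M(N)} v_j^2 \le C M^{1+10\delta/3}\le CN^{3/5+2\delta}\le CN^{4\gamma-1}$, using $\delta<2(\gamma-2/5)$. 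Since Lemma~\ref{l.LLNy} gives $\sum_{j\le M(N)} y_j^2\le C(a)N$ almost surely, Cauchy--Schwarz yields
\[
\Bigl|\sum_{j\le M(N)} 2y_j v_j\Bigr|\le 2\sqrt{\sum y_j^2\sum v_j^2}\le C(a)\sqrt{N\cdot N^{4\gamma-1}}=C(a)N^{2\gamma}\, ,
\]
which is the whole argument. No martingale cancellation or summation by parts is needed.

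For \eqref{eq.Rj2}, the paper proceeds exactly as you do up to the bound $E(R_j^2)\le 4E(Y_j^4)\le Cj^{4/3+\iota}$, but then invokes Chow's martingale convergence theorem plus Kronecker's lemma (rather than G\'al--Koksma) to conclude $\sum_{j\le M(N)} R_j\le C(a)M^{7/6+\iota}\le C(a)N^{21/30+\iota}$. Your route via Theorem~\ref{t.galkoksma} yields the same exponent and is equally valid; the two arguments are interchangeable here since $\{R_j\}$ is a martingale difference sequence and the diagonal term $\sum E(R_j^2)$ already controls the square of the full sum.
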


\begin{proof}
  Recalling the $\sigma$-algebra $\FF_{r_i}$ generated by the
  intervals in $\PP_{r_i}$, we have
  $\LL_{\ell-1} \subset \FF_{r_{i(\ell)}}$, where
  $i(\ell) = \max\{i \in \III_{\ell - 1} \} \leq C \ell^{5/3}$ by
  \eqref{eq.MN}.  Then
  \[
    u_\ell = \sum_{j \geq 1} E ( E(\xi_{i(\ell) + j} \mid
    \FF_{r_{i(\ell)}}) \mid \LL_{\ell - 1}) \, .
  \]
  Since $\sum_{j=1}^\infty e_j < \infty$, the bound
  \eqref{eq.chia2} in Lemma~\ref{l.chiapprox} gives
  \begin{equation}
    \label{eq.uj1}
    |u_\ell (a)| \leq \sum_{j \geq 1} C \min \bigl\{1, e_{ [ \eta
      (j - 2 i(\ell)^\delta))]} \bigr\} \leq \frac {2 C}{
      \eta} i(\ell)^\delta \leq C \ell^{5 \delta / 3} \, .
  \end{equation}
  Put $v_j = u_j - u_{j + 1}$, so that
  $Y_j^2 = y_j^2 -2 y_j v_j + v_j^2$.
  
  We claim that \eqref{eq.Yj2} follows if for a.e.\
  $a\in \Omega^\varphi_*$, there exists $C$ such that
  $\sum_{j = 1}^{M(N)} v_j^2 \leq C N^{4 \gamma-1}$.  Indeed,
  since $\gamma <1/2$, Lemma~\ref{l.LLNy} and Cauchy's inequality
  then give (using $\sum_{j = 1}^{M(N)} y_j^2\le C N$)
  \begin{align*}
    \Biggl| N - \sum_{j = 1}^{M(N)} Y_j^2 \Biggr|
    & = \Biggl| N - \sum_{j = 1}^{M(N)} (y_j^2 -2 y_j v_j + v_j^2)
      \Biggr| \\
    & \leq \Biggl| N - \sum_{j = 1}^{M(N)} y_j^2 \Biggr|+
      \sum_{j=1}^{M(N)}     v_j^2 +
      2 \sqrt{\sum_{j = 1}^{M(N)} y_j^2 \sum_{j = 1}^{M(N)} v_j^2
      } \\
    & \leq C(a) N^{2 \gamma} + C N^{2 \gamma} + C \sqrt{N N^{4
      \gamma-1}} \leq C(a) N^{2 \gamma}\, .
  \end{align*}
  But since  we have
  $ v_j^2 \leq C j^{10\delta/3}$ (by \eqref{eq.uj1}), 
  we find, using $\delta<2(\gamma-2/5)$,
  \[
    \sum_{j = 1}^{M(N)} v_j^2 \leq C M^{1 + 10 \delta / 3} \leq
    N^{3 / 5 + 2 \delta} \leq C N^{4 \gamma-1}
    \, .
  \]
  
  It remains to prove \eqref{eq.Rj2}. Set $R_j = Y_j^2 - E (Y_j^2
  \mid \LL_{j - 1})$ and observe that $\{R_j, \LL_j\}$ is a
  martingale difference sequence. By Minkowski's inequality
  \[
    E (R_j^2) \leq \Bigl(\sqrt{E (Y_j^4)} + \sqrt{E (E (Y_j^2
      \mid \mathcal{L}_{j-1})^2)} \Bigr)^2 \leq \Bigl( 2 \sqrt{E
    (  Y_j^4)} \Bigr)^2 = 4 E (Y_j^4) \, .
  \]
  Since $Y_j = y_j - v_j$, we have, again by Minkowski's
  inequality, 
  \begin{align*}
    E (R_j^2)
    &\leq 4 E (Y_j^4) \leq 4 \Bigl( (E( y_j^4))^\frac{1}{4} +
      (E ( v_j^4))^\frac{1}{4} \Bigr)^4 \leq C (E (y_j^4) + E
      (v_j^4)) \\
    &\leq C (E (w_j^4 )+ E (|w_j^4 - y_j^4|) + E(v_j^4)) \, .
  \end{align*}
  Since
  $w_j^4 - y_j^4 = (w_j^2 + y_j^2) (w_j + y_j) (w_j - y_j)$, we
  get from \eqref{eq.chia1} that $E (|w_j^4 - y_j^4|)$ is
  uniformly bounded.  By \eqref{eq.uj1}, we have
  $|u_j| \leq C j^{5\delta/3}$. Hence,
  $|v_j| \leq |u_j| + |u_{j-1}| \leq C j^{5\delta/3}$, and
  $E (v_j^4) \leq C j^{20 \delta/3} \leq C j^{4/3}$, since
  $\delta<1/5$.  For arbitrary $\iota>0$ the bound
  \eqref{eq.xi4total}, gives $C$ such that
  $E (w_j^4) \leq C j^{4/3 + \iota}$. Thus
  \begin{align}\label{forkappa}
    \sum_{j \geq 1}  \frac{E (R_j^2) }{j^{7/3 + \iota}}< \infty \, ,
  \end{align}
  and a martingale result (see \cite{chow}) implies that
  $\sum_{j\ge1}R_j/j^{7/6+\iota}$ converges almost surely. For $m_*$-a.e.\
  $a \in \Omega_*^\varphi$,
  Kronecker's Lemma gives  $C(a)$ with
  \[
    \sum_{j = 1}^{M(N)} R_j \leq C(a) M^{7/6 + \iota} \leq C(a) N^{21/30 +
      \iota} \, ,
  \]
 using \eqref{eq.MN} in the last inequality. Since
  $21 / 30 <  2 \gamma$ this establishes \eqref{eq.Rj2}.
\end{proof}

We shall apply the following embedding result. 
(See \cite[Theorem~A.1]{hh}.)

\begin{theorem}[Skorokhod's Representation Theorem]
  \label{t.skorokhod}
  For any zero-mean square-integrable martingale
  $\{ \sum_{k=1}^j Y_k,\ \LL_j \mid j \geq1\} $, there exist a
  probability space supporting a (standard) Brownian motion $W$, and
  nonnegative variables $\{T_k\, , \, \, k \geq 1\}$, such that
  $\{ \sum_{k=1}^j Y_k \}_{j \geq 1}$ and
  $\{ W(\sum_{k=1}^j T_k) \}_{j \geq 1}$ have the same
  distribution, and, in addition, letting $\mathcal{G}_0$ be the trivial $\sigma$-algebra (the empty set
   and the entire space), and $\mathcal{G}_j$, for $j\ge 1$, be the
  $\sigma$-algebra generated by
  \[
    \{
W(t) \mid  0 \leq t
    \leq \tau_j\}\, , \,\,\mbox{ where } \tau_j:=\sum_{k=1}^j T_k\,
    ,
 \]
 then $\tau_j$ is $\mathcal{G}_j$-measurable,
 while
 $E (T_1 \mid \mathcal{G}_{0}) = E (W(T_1) ^2 \mid
 \mathcal{G}_{0})$, and
 \[
   E (T_j \mid \mathcal{G}_{j-1}) = E\bigl ( \bigl
   (W(\tau_j)-W(\tau_{j-1}) \bigr)^2 \mid \mathcal{G}_{j-1} \bigr
   )\, , \quad \forall j\ge 2 \, , \quad \mbox{almost surely.}
\]
\end{theorem}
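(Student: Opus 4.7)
The plan is to reduce the martingale embedding to the classical single-variable Skorokhod embedding, and then iterate it using the strong Markov property of Brownian motion on a suitably enlarged probability space.

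First, for a single zero-mean square-integrable random variable $Y$ with distribution $\nu$, I would construct a randomised stopping time $T$ for a standard Brownian motion $W$ starting at $0$ such that $W(T)$ has law $\nu$ and $E(T)=E(W(T)^2)=E(Y^2)$. When $\nu$ is supported on two points $\{a,b\}$ with $a<0<b$, the exit time $T_{a,b}=\inf\{t>0\mid W(t)\notin(a,b)\}$ does the job: optional stopping gives $P(W(T_{a,b})=b)=|a|/(b-a)$ and $E(T_{a,b})=|a|\cdot b$, matching the first and second moments. For general $\nu$, a Chacon--Walsh / Dubins balayage decomposes $\nu$ as a randomised mixture of such two-point laws, and an auxiliary Uniform$[0,1]$ variable selects the pair.

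Next, I would construct $\tau_j$ inductively. Work on an enlarged probability space carrying $W$, the original sequence $\{Y_j\}$, and a sequence of independent Uniform$[0,1]$ randomisers. Since $\{Y_j,\mathcal{L}_j\}$ is a martingale difference sequence, a regular version of the conditional law of $Y_j$ given $\mathcal{L}_{j-1}$ exists and is centred. Having built $\tau_{j-1}$, the strong Markov property at $\tau_{j-1}$ implies that $s\mapsto W^{(j)}(s):=W(\tau_{j-1}+s)-W(\tau_{j-1})$ is a Brownian motion independent of $\mathcal{G}_{j-1}$. Applying the single-variable embedding to $W^{(j)}$, conditionally on $\mathcal{L}_{j-1}$ and using the $j$th randomiser to pick the two-point pair, yields a stopping time $T_j$ such that
\[
  W(\tau_{j-1}+T_j)-W(\tau_{j-1})\mid\mathcal{G}_{j-1}\;\sim\;Y_j\mid\mathcal{L}_{j-1}\,,
\]
together with $E(T_j\mid\mathcal{G}_{j-1})=E((W(\tau_j)-W(\tau_{j-1}))^2\mid\mathcal{G}_{j-1})$. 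Setting $\tau_j=\tau_{j-1}+T_j$ makes $\tau_j$ a $\mathcal{G}_j$-stopping time, and the joint distributional identity between $\{\sum_{k=1}^jY_k\}_j$ and $\{W(\tau_j)\}_j$ then follows by induction, since at each step the new increment has the prescribed conditional law.

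The main obstacle I would expect is the bookkeeping of measurability and filtrations: one must verify that the conditional law of $Y_j$ given $\mathcal{L}_{j-1}$ admits a regular, jointly measurable version; that the single-variable embedding can be carried out measurably in the parameter (the conditioning variable); and that the filtration $\mathcal{G}_j$ generated by $W$ up to $\tau_j$ really does contain the information needed for $T_{j+1}$ to be $\mathcal{G}_j$-adapted. Once the enlarged probability space is set up with the independent randomisers, these issues reduce to standard Polish-space measurable selection arguments, and the moment identity $E(T_j\mid\mathcal{G}_{j-1})=E((W(\tau_j)-W(\tau_{j-1}))^2\mid\mathcal{G}_{j-1})$ is then automatic from the construction of the two-point embeddings.
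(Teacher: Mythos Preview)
The paper does not prove this theorem at all: it is stated as a known result and attributed to \cite[Theorem~A.1]{hh} (Hall--Heyde). Your sketch is a reasonable outline of the standard proof (single-variable Skorokhod embedding via two-point exit times, then iteration along the martingale difference sequence using the strong Markov property), and is essentially what one finds in Hall--Heyde or in the original Skorokhod/Strassen treatments. So there is nothing to compare: the paper simply quotes the result, and your proposal supplies the classical argument behind the citation.
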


By the last claim of Theorem~\ref{t.skorokhod} and properties of Brownian motion
\begin{equation}
  \label{eq.Tjhelp}
  E (T_j \mid \GG_{j - 1}) = E (W(T_j)^2 \mid \GG_{j - 1}) \, ,
  \qquad \forall j\ge 1\, , 
\end{equation}
almost surely.
(Indeed, letting $W_1$ be an
  independent copy of $W$ we have
  $W(\tau_j) = W_1 (\tau_{j-1} + T_j) = W_1 (\tau_{j-1}) + W
  (T_j)$ in distribution, so that
  $W(\tau_j) - W(\tau_{j-1}) = W(T_j)$ in
  distribution.)

We need one last lemma. Recall that $\gamma\in (2/5,1/2)$ is fixed.

\begin{lemma}[Strong Law of Large Numbers for the Sequence $T_j$]
  \label{l.LLNT}
  For  $m_*$-a.e.\ $a\in \Omega_*^\varphi$, there exists  $C(a)$ such
  that
  \begin{equation}
    \label{eq.Tj}
    \Biggl| N - \sum_{j = 1}^{M(N)} T_j \Biggr| \leq C(a) N^{2 \gamma} \,
    , \qquad \forall N \geq 1 \, .
  \end{equation}
\end{lemma}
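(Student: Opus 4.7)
The plan is to transfer the two estimates of Lemma~\ref{l.LLNY} to the Brownian probability space via Skorokhod's equal-distribution statement, and then to handle the discrepancy between $\sum T_j$ and $\sum W(T_j)^2$ by a martingale argument in the filtration $\{\GG_j\}$, following the scheme of \cite[Lemma~6.4]{DS} and \cite[Sec.~3.5]{ps}.

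First, since $\{Y_j\}_{j\ge1}$ and $\{W(T_j)\}_{j\ge1}=\{W(\tau_j)-W(\tau_{j-1})\}_{j\ge1}$ have the same joint distribution by Theorem~\ref{t.skorokhod}, the almost-sure bound \eqref{eq.Yj2} transfers to give, almost surely on $\Omega_W$,
\[
  \Bigl|\,N-\sum_{j=1}^{M(N)}W(T_j)^2\,\Bigr|\le C N^{2\gamma}\, ,\qquad \forall N\ge 1\, .
\]
It therefore suffices to control $\sum_{j=1}^{M(N)}(T_j-W(T_j)^2)$. Using \eqref{eq.Tjhelp} I would write
\[
  T_j-W(T_j)^2=\bigl(T_j-E(T_j\mid\GG_{j-1})\bigr)-\bigl(W(T_j)^2-E(W(T_j)^2\mid\GG_{j-1})\bigr)\, ,
\]
exhibiting $\sum_{j=1}^{M}(T_j-W(T_j)^2)$ as the difference of two $\{\GG_j\}$-martingales.

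The almost-sure growth of each of these martingales will be handled by Chow's theorem (\cite{chow}) followed by Kronecker's lemma, exactly as in the derivation of \eqref{forkappa} in the proof of Lemma~\ref{l.LLNY}. The required $L^2$ bounds on the martingale differences reduce to bounds on $E(W(T_j)^4)$ and $E(T_j^2)$. For the former, equal distribution gives $E(W(T_j)^4)=E(Y_j^4)\le C j^{4/3+\iota}$ for any $\iota>0$, a bound already established in the proof of Lemma~\ref{l.LLNY} by combining \eqref{eq.xi4total}, \eqref{eq.chia1} and \eqref{eq.uj1}. For $E(T_j^2)$ the extra ingredient is the $L^4$ identity $E(W(T_j)^4\mid\GG_{j-1})=3\,E(T_j^2\mid\GG_{j-1})$ for Brownian motion evaluated at the (augmented-filtration) stopping times $\tau_j$ produced by the Skorokhod embedding of \cite{hh}; this yields $E(T_j^2)\le \tfrac{1}{3}E(Y_j^4)\le C j^{4/3+\iota}$, so that both martingale-difference sequences $R_j$ satisfy $E(R_j^2)\le C j^{4/3+\iota}$.

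With these second-moment bounds, Chow's theorem yields a.s.\ convergence of $\sum_{j\ge 1}R_j/j^{7/6+\iota/2}$ for each martingale-difference sequence, so Kronecker's lemma gives partial sums of order $o(M^{7/6+\iota'})$ a.s.\ for every $\iota'>0$. Since $M(N)\le CN^{3/5}$ by \eqref{eq.MN}, this translates to $o(N^{7/10+3\iota'/5})$, dominated by $N^{2\gamma}$ thanks to the slack $2\gamma>4/5>7/10$ provided by $\gamma>2/5$. Combined with the transferred bound on $|N-\sum W(T_j)^2|$, this yields \eqref{eq.Tj}. The main obstacle I expect is the justification of the $L^4$ identity, which is not part of the statement of Theorem~\ref{t.skorokhod} and must be extracted from the concrete Skorokhod construction in \cite{hh}, where $\tau_j$ is realised as a stopping time in an augmented Brownian filtration and the identity then follows by optional stopping applied to the martingale $W(t)^4-6\int_0^t W(s)^2\, ds$.
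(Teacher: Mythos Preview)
Your overall plan coincides with the paper's: transfer \eqref{eq.Yj2} to the Brownian space by equal distribution, then split $T_j-W(T_j)^2$ into two $\{\GG_j\}$-martingale difference sequences via \eqref{eq.Tjhelp}, and control each by Chow plus Kronecker using the fourth-moment bound $E(Y_j^4)\le Cj^{4/3+\iota}$. The paper shortcuts one of your two martingales by transferring \eqref{eq.Rj2} directly (conditional expectations being functionals of the joint law), but your re-derivation via Chow is equally valid.

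The one genuine issue is your ``$L^4$ identity'' $E(W(T_j)^4\mid\GG_{j-1})=3\,E(T_j^2\mid\GG_{j-1})$. This is \emph{not} an identity for stopping times: the equality $E(W(\tau)^4)=3E(\tau^2)$ holds when $\tau$ is deterministic or independent of $W$, but fails in general (e.g.\ for the exit time of $[-1,1]$). Your optional-stopping sketch on $W(t)^4-6\int_0^tW(s)^2\,ds$ yields $E(W(\tau)^4)=6E\!\int_0^\tau W(s)^2\,ds$, not $3E(\tau^2)$. What you actually need is only the \emph{inequality} $E(T_j^2\mid\GG_{j-1})\le C\,E(\tilde Y_j^4\mid\GG_{j-1})$, and this is part of the full statement of \cite[Theorem~A.1]{hh} (the Skorokhod construction there gives $E(T_j^r\mid\GG_{j-1})\le C_r\,E(|\tilde Y_j|^{2r}\mid\GG_{j-1})$ for all $r\ge1$). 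The paper uses this implicitly when it writes ``$E(R_j^2)\le 4E(W(T_j)^4)$'' with $R_j=E(T_j\mid\GG_{j-1})-T_j$. Replace your identity by this inequality and the argument goes through unchanged.
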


\begin{proof} 
To start, apply Theorem~\ref{t.skorokhod} to the 
martingale difference sequence $Y_j$ from \eqref{defYj}, with $\LL_j$
generated by $\{y_\ell\}_{1\le \ell \le j}$.
 Let $\tilde{Y}_j = W(\tau_j) - W(\tau_{j-1})$,
    so that $W (\tau_j) = \sum_{k=1}^j \tilde{Y}_{k}$  and
    $\tilde{Y}_j = W(T_j)$.   By \eqref{eq.Tjhelp}, we have, almost
  surely,
  \begin{align*}
    N - \sum_{j = 1}^{M(N)} T_j = \bigl [N - \sum_{j = 1}^M
    \tilde{Y}_j^2 \bigr] &
+ \sum_{j = 1}^M \bigl[\tilde{Y_j}^2 - E(\tilde{Y}_j^2 \mid\mathcal{G}_{j-1})\bigr] \\
   & + \sum_{j = 1}^M \bigl[ E(T_j \mid \mathcal{G}_{j-1}) - T_j  \bigr] \, ,
\,\,\forall N\ge 1 \, .
  \end{align*}
  Then, since $Y_j$ and $\tilde{Y}_j$ have the
    same distribution, the bound \eqref{eq.Yj2} in Lemma~\ref{l.LLNY} gives
  $C(a)$ such that, for all
  $N\ge 1$, the first sum in the
  right-hand side above is not larger than $C(a) N^{2\gamma}$ . 
  
  For the second sum in the right-hand side
    above, we use \eqref{eq.Rj2}. Since conditional expectations can be expressed in terms of distributions, \eqref{eq.Rj2} is also valid with $Y_j$ replaced by $\tilde{Y}_j$. Thus the second sum in the right-hand side is also bounded by
    $C(a) N^{2\gamma}$ for all $N \geq 1$.  
  
  Finally, let $R_j = E (T_j \mid \GG_{j-1}) - T_j$.  Then
  $\{R_j, \GG_j\}$ is a martingale difference sequence by
  \eqref{eq.Tjhelp}. As in the proof of \eqref{eq.Rj2}, we can estimate
  $
    E( R_j^2) \leq 4 E (W(T_j)^4)$, and thus there exists $C(a)$ such that,
for all $N\ge 1$, we have 
    $\sum_{j=1}^{M(N)} R_j \leq C N^{21/30 + \iota} \leq C(a) N^{2
      \gamma}$ almost surely.
\end{proof}

\begin{proof}[Proof of Theorem~\ref{t.asip}]
  Just like Schnellmann, we follow the proof of
  \cite[Lemma~3.5.3]{ps}, replacing their $1/2-\alpha/2+\gamma$
  by $\gamma$, and replacing Lemma~3.5.1 there by our
  Lemma~\ref{l.LLNT}.  We then obtain that, almost surely,
  \[
    \Biggl| \sum_{j = 1}^{M(N)} Y_j - W (N) \Biggr| = O
    (N^\gamma) \, .
  \]
  Then, using \eqref{eq.uj1} and \eqref{eq.MN}, we find
  \begin{equation}
    \label{eq.yjYj}
    \Biggl| \sum_{j=1}^{M(N)} y_j - Y_j \Biggr|
    = \Biggl| \sum_{j=1}^{M(N)} (u_{j+1} - u_j) \Biggr|
    = |u_{M(N)+1} - u_1| \le C
    N^{\delta}\, .
  \end{equation}
  Since $\delta <2/5$, and recalling \eqref{eq.lastterm}, this
  establishes Theorem~\ref{t.asip}.
\end{proof}

\end{document}